\newtheorem{theorem}{Theorem}
\newtheorem{assumption}{Assumption}
\newtheorem{lemma}{Lemma}
\newtheorem{definition}{Definition}
\def\defeq{\triangleq}
\DeclareMathOperator*{\minimize}{\mathrm{minimize}}
\def\ds{\mathds}
\def\wt{\widetilde}
\def\wh{\widehat}
\def\col{{\mathrm{col}}}
\def\diag{{\mathrm{diag}}}
\def\Pr{{\mathbb{P}}}
\def\Tr{{\mathrm{Tr}}}
\def\kron{\otimes}
\def\clst{\textrm{c}}
\def\ss{{\textrm{fix}}}
\def\Var{\mathrm{Var}}
\def\mumax{\mu_{\max}}
\def\lowdim{\textrm{low}}
\def\longterm{\textrm{long}}
\def\sam{\textrm{sam}}
\def\one{\ds{1}}
\def\E{\mathbb{E}}
\def\F{\mathbb{F}}
\def\T{\mathsf{T}}
\def\nn{\nonumber}
\newcommand{\be}{\begin{equation}}
\newcommand{\ee}{\end{equation}}
\newcommand{\bea}{\begin{eqnarray*}}
\newcommand{\eea}{\end{eqnarray*}}
\newcommand{\mbbC}{\mathbb{C}}
\newcommand{\mbbH}{\mathbb{H}}
\newcommand{\mbbN}{\mathbb{N}}
\newcommand{\mbbR}{\mathbb{R}}
\newcommand{\Acal}{\mathcal{A}}
\newcommand{\Bcal}{\mathcal{B}}
\newcommand{\Ccal}{\mathcal{C}}
\newcommand{\Dcal}{\mathcal{D}}
\newcommand{\Gcal}{\mathcal{G}}
\newcommand{\Hcal}{\mathcal{H}}
\newcommand{\Jcal}{\mathcal{J}}
\newcommand{\Mcal}{\mathcal{M}}
\newcommand{\Ncal}{\mathcal{N}}
\newcommand{\Pcal}{\mathcal{P}}
\newcommand{\Rcal}{\mathcal{R}}
\newcommand{\Vcal}{\mathcal{V}}
\newcommand{\sw}{{\scriptstyle{\mathcal{W}}}}
\newcommand{\swb}{{\scriptstyle{\boldsymbol{\mathcal{W}}}}}
\newcommand{\ssb}{{\scriptstyle{\boldsymbol{\mathcal{S}}}}}
\begin{document}
%
\title{Distributed Clustering and Learning Over Networks}

\author{Xiaochuan~Zhao,~\IEEEmembership{Student~Member,~IEEE,}
        and Ali~H.~Sayed,~\IEEEmembership{Fellow,~IEEE}
\thanks{The authors are with Department of Electrical Engineering, University of California, Los Angeles, CA 90095
Emails: xiaochuanzhao@ucla.edu and sayed@ee.ucla.edu.}
\thanks{This work was supported by NSF grants CCF-1011918 and ECCS-1407712. A short and limited early version of this work appeared in the conference proceedings \cite{Zhao12CIP}.}}


\maketitle

\begin{abstract}
Distributed processing over networks relies on in-network processing and cooperation among neighboring agents. Cooperation is beneficial when agents share a common objective. However, in many applications agents may belong to different clusters that pursue different objectives. Then, indiscriminate cooperation will lead to undesired results. In this work, we propose an adaptive clustering and learning scheme that allows agents to learn which neighbors they should cooperate with and which other neighbors they should ignore. In doing so, the resulting algorithm enables the agents to identify their clusters and to attain improved learning and estimation accuracy over networks. We carry out a detailed mean-square analysis and assess the error probabilities of Types I and II, i.e., false alarm and mis-detection, for the clustering mechanism. Among other results, we establish that these probabilities decay exponentially with the step-sizes so that the probability of correct clustering can be made arbitrarily close to one.
\end{abstract}

\begin{IEEEkeywords}
Clustering, diffusion adaptation, consensus adaptation, adaptive networks, distributed learning, distributed optimization, unsupervised learning
\end{IEEEkeywords}

\allowdisplaybreaks

\section{Introduction}
Distributed algorithms for learning, inference, modeling, and optimization by networked agents are prevalent in many domains and applicable to a wide range of problems \cite{Sayed14PROC, Sayed14NOW, Dimakis10PROC, Saber07Pro}. Among the various classes of algorithms, techniques that are based on first-order gradient-descent iterations are particularly useful for distributed processing due to their low complexity, low power demands, and robustness against imperfections or unmodeled effects. Three of the most studied classes are consensus algorithms \cite{Olfati04TAC, Saber07Pro, Kar09TSP, Nedic09TAC, Kar11TSP}, diffusion algorithms \cite{Lopes08TSP, Cattivelli10TSP, Chen12TSP, Zhao12TSP2, Zhao12TSP, Sayed13Chapter, Sayed13SPM, Sayed14PROC}, and incremental algorithms \cite{Bertsekas97JOP, Nedic01JOP, Rabbat05JSAC, Lopes07TSP, Helou09SIAMOPT, Johansson09SIAMOPT}. The incremental techniques rely on the determination of a Hamiltonian cycle over the topology, which is generally an NP-hard problem and is therefore a hindrance to real-time adaptation, and even more so when the topology is dynamic and changes with time. For this reason, we will consider mainly learning algorithms of the consensus and diffusion types.

In this work we focus on the case in which \emph{constant} step-sizes are employed in order to enable \emph{continuous} adaptation and learning in response to streaming data. When diminishing step-sizes are used, the algorithms would cease to adapt after the step-sizes have approached zero, which is problematic for applications that require the network to remain continually vigilant and to track possible drifts in the data and clusters. Therefore, adaptation with constant step-sizes is necessary in these scenarios. It turns out that when constant step-sizes are used, the dynamics of the distributed (consensus or diffusion) strategies are modified in a non-trivial manner: the stochastic gradient noise that is present in their update steps does not die out anymore and it seeps into the operation of the algorithms. In other words, while this noise component would be annihilated by decaying step-sizes, it will remain persistently active during constant step-size adaptation. As such, it becomes important to evaluate how well constant step-size implementations can alleviate the influence of gradient noise. It was shown in \cite{Tu12TSP, Sayed14PROC, Sayed14NOW} that consensus strategies can become problematic when constant step-sizes are employed. This is because of an asymmetry in their update relations, which can cause the state of the network to grow unbounded when these networks are used for adaptation. In comparison, diffusion networks do not suffer from this asymmetry problem and have been shown to be mean stable regardless of the topology of the network. This is a reassuring property, especially in the context of applications where the topology can undergo changes over time. These observations motivate us to focus our analysis on diffusion strategies, although the conclusions and arguments can be extended with proper adjustments to consensus strategies.

Now, most existing works on distributed learning algorithms focus on the case in which all agents in the network are interested in estimating a common parameter vector, which generally corresponds to the minimizer of some aggregate cost function (see, e.g., \cite{Sayed14PROC, Sayed14NOW, Dimakis10PROC, Saber07Pro} and the references therein). In this article, we are instead interested in scenarios where different clusters of agents within the network are interested in estimating different parameter vectors. There have been several useful works in this domain in the literature under various assumptions, including in the earlier version of this work in \cite{Zhao12CIP}. This early investigation dealt only with the case of two separate clusters in the network with each cluster interested in one parameter vector. One useful application of this formulation in the context of biological networks was considered in \cite{Tu14TSP}, where each agent was assumed to collect data arising from one of two models (e.g., the location of two separate food sources). The agents did not know which model generated their observations and, yet, they needed to reach agreement about which model to follow (i.e., which food source to move towards). Another important extension dealing with multiple (more than two) models appears in \cite{ChenJ14TSP, ChenJ14TSP2} where multi-task problems are introduced. In this formulation, different clusters of the agents are again interested in estimating different parameter vectors (called ``tasks'') and the tasks of adjacent clusters are further assumed to be related to each other so that cooperation among clusters can still be beneficial. This formulation is useful in many scenarios, as already illustrated in \cite{ChenJ14TSP}, including in multiple target tracking \cite{Liu07MSP, Zhang11TAC} and classification problems involving multiple models \cite{Duda01, Francis74AS, Li96TAC, Cherkassky05TNN, Theodoridis09, Jacob09NIPS}. Other useful variations of multi-task problems appear in \cite{Bertrand10TSP}, which assumes fully-connected networks, and in \cite{Bogdanovic14ICASSP} where the agents have two types of parameters to estimate (a local parameter and a global parameter). These various works focus on mean-square-error (MSE) design, where the parameters of interest are estimated by seeking the minimizer of an MSE cost. Moreover, with the exception of \cite{Zhao12CIP, ChenJ14TSP2}, it is generally assumed in these works that the agents know beforehand which clusters they belong to or which parameters they are interested in estimating.

In this article, we extend the approach of \cite{Zhao12CIP} and study multi-tasking adaptive networks  under three conditions that are fundamentally different from previous studies. First, we go beyond mean-square-error estimation and allow for more general convex risk functions at the agents. This level of generality allows the framework to handle broader situations both in adaptation and learning, such as logistic regression for pattern classification purposes. Second, we do not assume any relation among the different objectives pursued by the clusters. In other words, we study the important problem where different components of the network are truly interested in different objectives and would like to avoid interference among clusters. And third, the agents do not know beforehand which clusters they belong to and which other agents are interested in the same objective.

For example, in an application involving a sensor network tracking multiple moving objects from various directions, it is reasonable to assume that the trajectories of these objects are independent of each other. In this case, only information shared within clusters is beneficial for learning; the information from agents in other clusters would amount to interference. This means that agents would need to cooperate with neighbors that belong to the same cluster and would need to cut their links to neighbors with different objectives. This task would be simple to achieve if agents were aware of their cluster information. However, we will not be making that assumption. The cluster information will need to be learned as well. This point highlights one major feature of our formulation: we do not assume that agents have full knowledge about their clusters. This assumption is quite common in the context of unsupervised machine learning \cite{Duda01, Theodoridis09}, where the collected measurement data are not labeled and there are multiple candidate models. If two neighboring agents are interested in the same model and they are aware of this fact, then they should exchange data and cooperate. However, the agents may not know this fact, so they cannot be certain about whether or not they should cooperate. Accordingly, in this work, we will devise an adaptive clustering and learning strategy that allows agents to learn which neighbors they should cooperate with. In doing so, the resulting algorithm enables the agents in a network to be correctly clustered and to attain improved learning performance through enhanced intra-cluster cooperation.

\emph{Notation}: We use lowercase letters to denote vectors, uppercase letters for matrices, plain letters for deterministic variables, and boldface letters for random variables. We also use $(\cdot)^{\T}$ to denote transposition, $(\cdot)^{-1}$ for matrix inversion, $\Tr(\cdot)$ for the trace of a matrix, and $\| \cdot \|$ for the 2-norm of a matrix or the Euclidean norm of a vector. Besides, we use $A \kron B$ for matrices $A$ and $B$ to denote their Kronecker product, $A \ge B$ to demote that $A-B$ is positive semi-definite, and $A \succeq B$ to demote that all entries of $A-B$ are nonnegative.

\section{Problem Formulation}
\label{sec:problem}
We consider a network consisting of $N$ agents inter-connected via some topology. An individual cost function, $J_k(w):\mbbR^{M\times 1} \mapsto \mbbR$, of a vector parameter $w$, is associated with every agent $k$. Each cost $J_k(w)$ is assumed to be strictly-convex and is minimized at a unique point $w_k^o$. According to the minimizers $\{ w_k^o \}$, agents in the network are categorized into $Q \ge 2$ mutually-exclusive clusters, denoted by $\Ccal_q$, $q = 1, 2, \dots, Q$.

\begin{definition}[Cluster]
\label{def:cluster}
Each cluster $q$, denoted by $\Ccal_q$, consists of the collection of agents whose individual costs share the common minimizer $w_q^\star$, i.e., $w_k^o = w_q^\star$ for all $k \in \Ccal_q$. \hfill \IEEEQED
\end{definition}

Since agents from different clusters do not share common minimizers, the network then aims to solve the \emph{clustered} multi-task problem:
\be
\label{eqn:Jclusterdef}
\minimize_{\{ w_q \}_{q = 1}^Q } \quad J(w_1, \dots, w_Q) \defeq \sum_{q=1}^{Q} \sum_{k \in \Ccal_q} J_k(w_q)
\ee

\noindent If the cluster information $\{\Ccal_q\}$ is available to the agents, then problem \eqref{eqn:Jclusterdef} can be decomposed into $Q$ separate optimization problems over the sub-networks associated with the clusters:
\be
\label{eqn:Jclusterqdef}
\minimize_w \quad J_q^\clst(w) \defeq \sum_{k \in \Ccal_q} J_k(w) 
\ee
for $q = 1, 2, \dots, Q$. Assuming the cluster topologies are connected, the corresponding minimizers $\{w_q^\star \}$ can be sought by employing diffusion strategies over each cluster. In this case, collaborative learning will only occur \emph{within} each cluster without any interaction across clusters. This means that for every agent $k$ that belongs to a particular cluster $\Ccal_q$, i.e., $k \in \Ccal_q$, its neighbors, which belong to the set denoted by $\Ncal_k$, will need to be segmented into two sets: one set is denoted by $\Ncal_k^+$ and consists of neighbors that belong to the same cluster $\Ccal_q$, and the other set is denoted by $\Ncal_k^-$ and consists of neighbors that belong to other clusters. It is clear that
\be
\label{eqn:Neighborhood+and-def}
\Ncal_k^+ \defeq \Ncal_k \cap \Ccal_q, \qquad \qquad \Ncal_k^- \defeq \Ncal_k \backslash \Ncal_k^+
\ee
We illustrate a two-cluster network with a total of $N = 20$ agents in Fig. \ref{fig:illustration_init_tot}. The agents in the clusters are denoted by blue and red circles, and are inter-connected by the underlying topology, so that agents may have in-cluster neighbors as well as neighbors from other clusters. For example, agent $k$ from blue cluster $\Ccal_1$ has the in-cluster sub-neighborhood $\Ncal_k^+ = \{k, 3, 4\}$, which is a subset of its neighborhood $\Ncal_k = \{ k, 1, 2, 3, 4, 5 \}$. If the cluster information is available to all agents, then the network can be split into two sub-networks, one for each cluster, as illustrated in Figs. \ref{fig:illustration_res_blue} and \ref{fig:illustration_res_red}.

However, in this work we consider the more challenging scenario in which the cluster information $\{\Ccal_q\}$ is only \emph{partially} available to the agents beforehand, or even completely unavailable. When the cluster information is completely absent, each agent $k$ must first identify neighbors belonging to $\Ncal_k^+$. When the cluster information is partially known, meaning that some agents from the same cluster already know each other, then these agents can cooperate to identify the other members in their cluster. In order to study these two scenarios in a uniform manner, we introduce the concept of a group.

\begin{definition}[Group]
\label{def:group}
A group $m$, denoted by $\Gcal_m$, is a collection of connected agents from the same cluster and knowing that they belong to this same cluster. \hfill \IEEEQED
\end{definition}

\begin{figure}[h]
\centerline{
\subfloat[The underlying topology.]
{\includegraphics[width=2.5in]{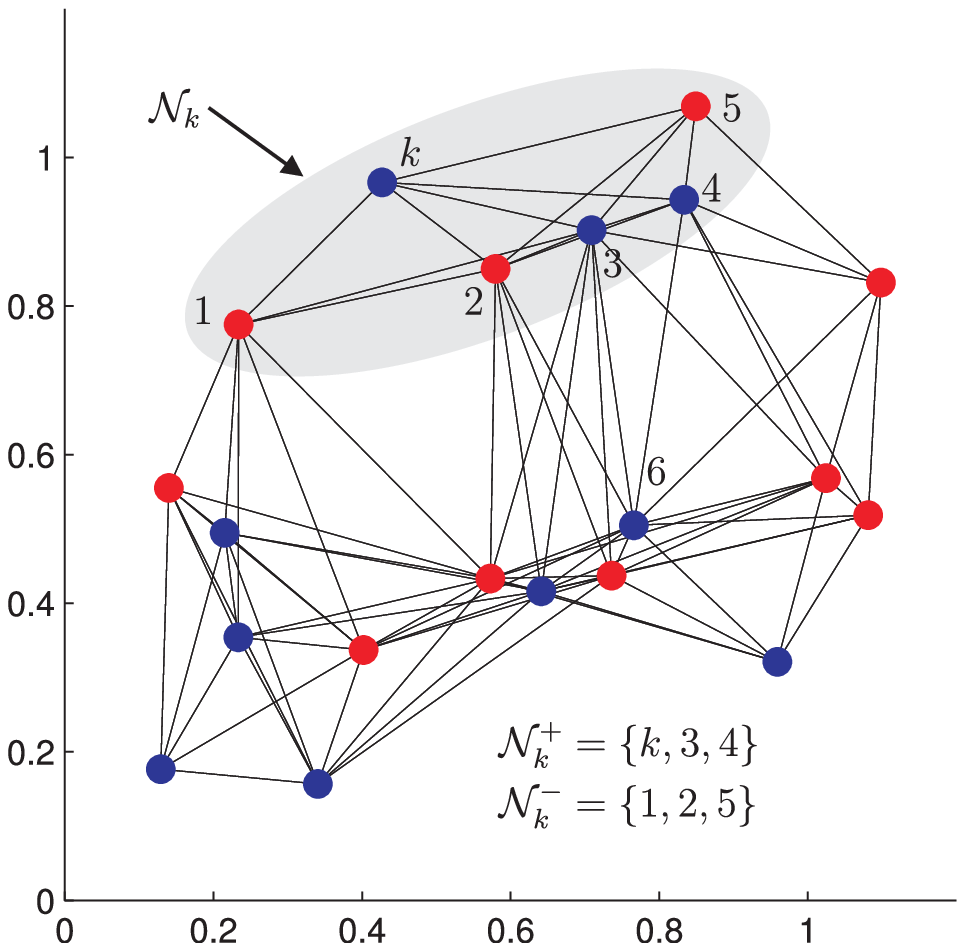}
\label{fig:illustration_init_tot}}
\hfil
\subfloat[The clustered topology for $\Ccal_1$.]
{\includegraphics[width=2.5in]{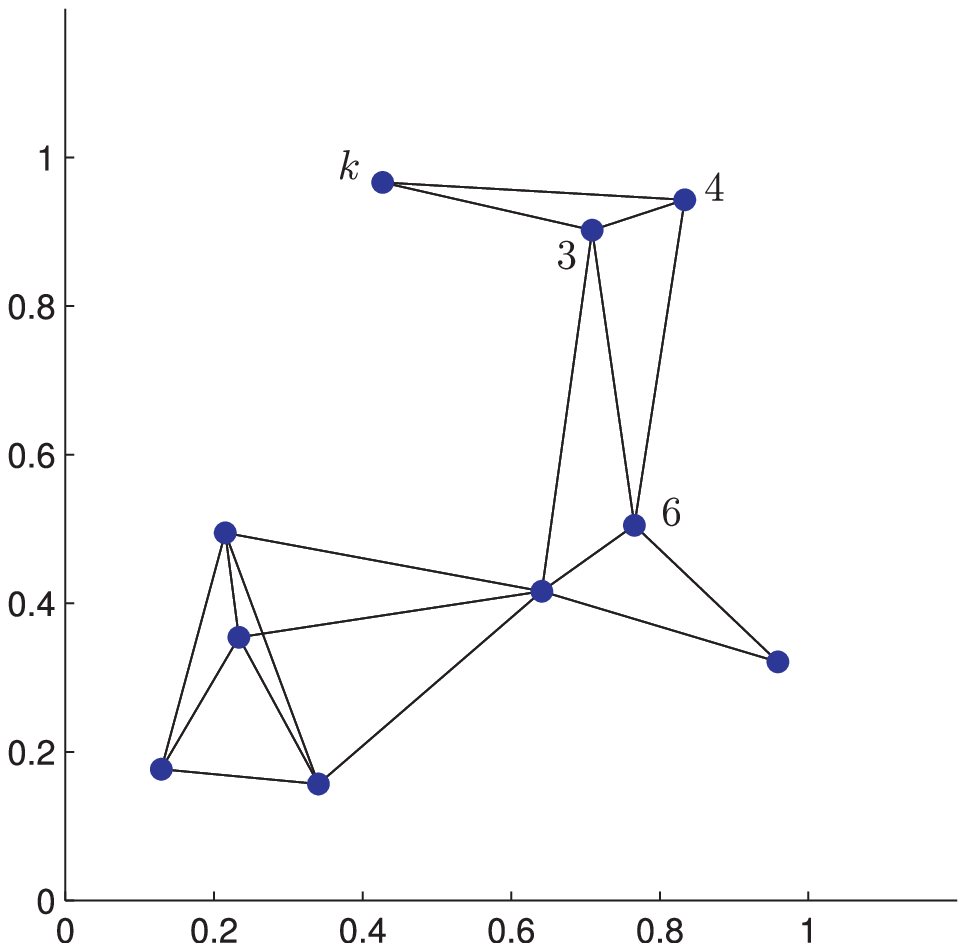}
\label{fig:illustration_res_blue}}}
\centerline{
\subfloat[The clustered topology for $\Ccal_2$.]
{\includegraphics[width=2.5in]{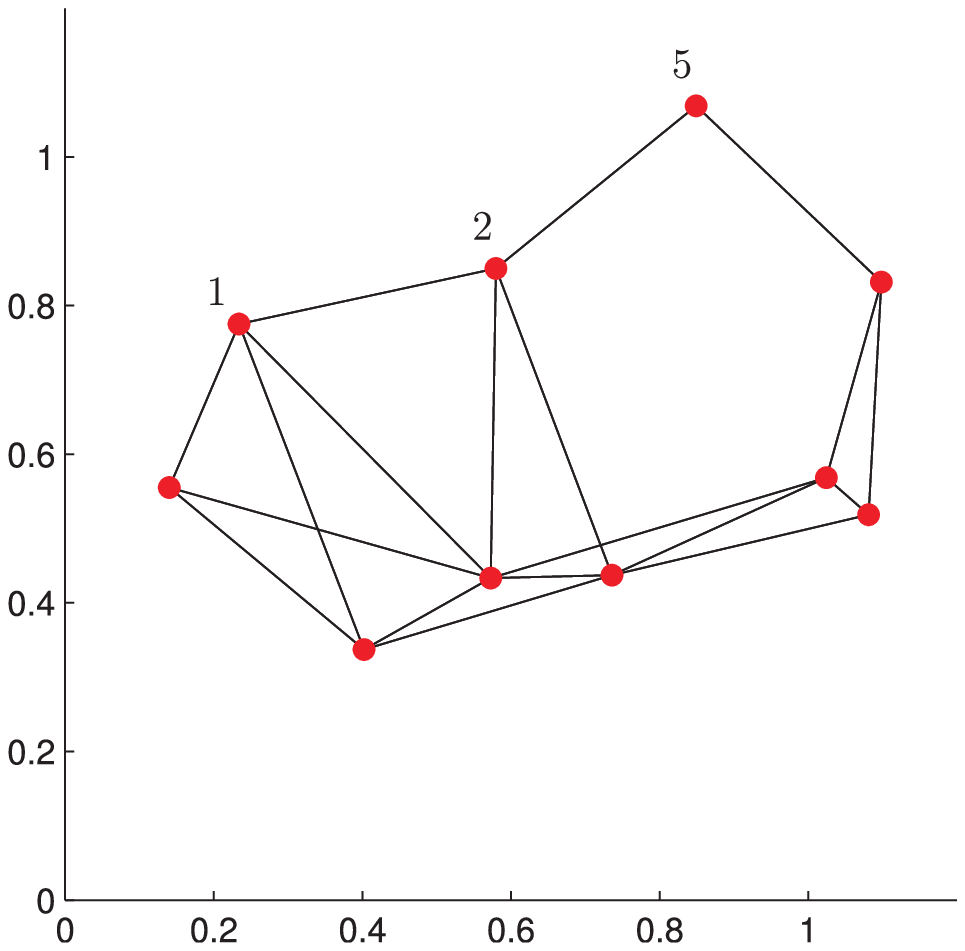}
\label{fig:illustration_res_red}}
\hfil
\subfloat[Five groups from cluster $\Ccal_1$.]
{\includegraphics[width=2.5in]{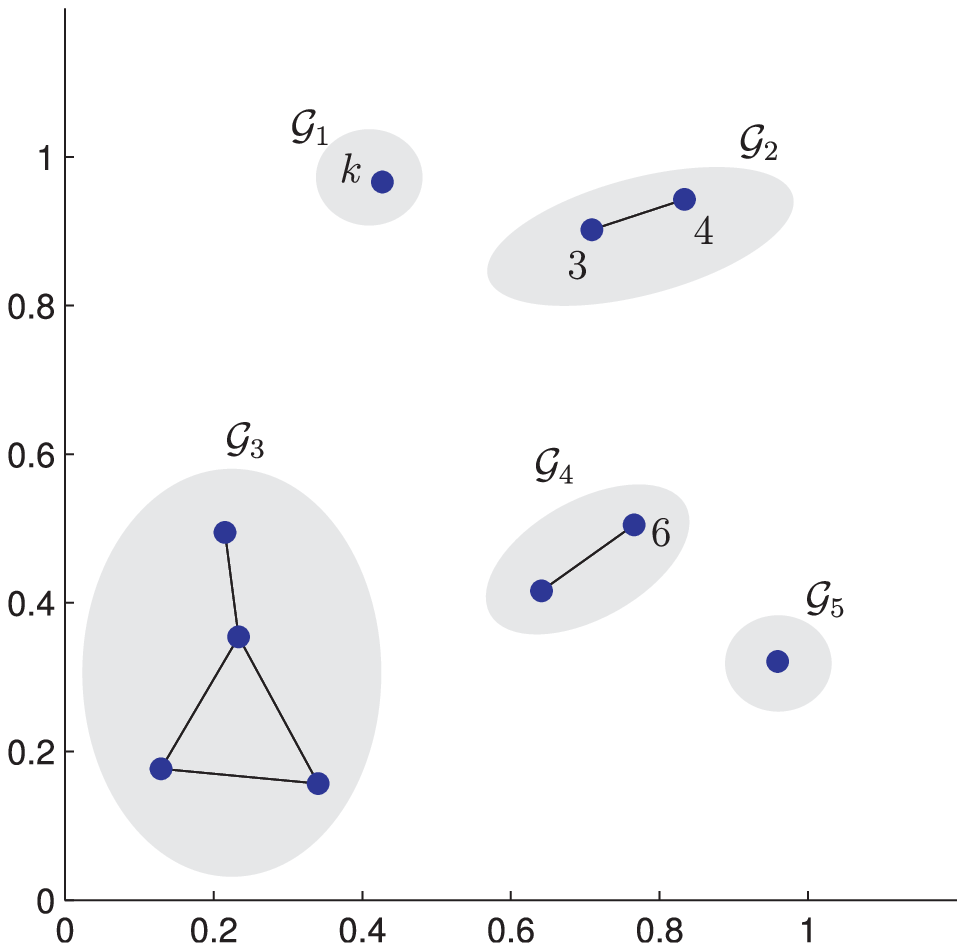}
\label{fig:illustration_init_blue}}
}
\caption{A network with $N=20$ nodes and $Q = 2$ clusters. Cluster $\Ccal_1$ consists of 10 agents in blue. Cluster $\Ccal_2$ consists of another 10 agents in red. Agent $k$ belongs to Cluster $\Ccal_1$, and its neighborhood is denoted by $\Ncal_k = \{ k, 1, 2, 3, 4, 5 \}$ with $\Ncal_k^+ = \{k, 3, 4\}$. With perfect cluster information, the underlying topology splits into two sub-networks, one for each cluster. With partial cluster information, cluster $\Ccal_1$ breaks down into five groups: two singleton groups $\Gcal_1$ and $\Gcal_5$, and three non-trivial groups $\Gcal_2$, $\Gcal_3$, and $\Gcal_4$. Through adaptive learning and clustering, the five groups in (b) will end up merging into one largest group corresponding to the entire cluster in (c).}
\label{fig:network}
\vspace{-1\baselineskip}
\end{figure}

Figure \ref{fig:illustration_init_blue} illustrates the concept of groups when cluster information is only partially available to the agents in the network from Fig. \ref{fig:illustration_init_tot}. If an agent has no information about its neighbors, then it falls into a singleton group, such as groups $\Gcal_1$ and $\Gcal_5$ in Fig. \ref{fig:illustration_init_blue}. If some neighboring agents know the cluster information of each other, then they form a non-trivial group, such as groups $\Gcal_2$, $\Gcal_3$, and $\Gcal_4$. If every agent in a cluster knows the cluster information of all its neighbors, then all cluster members form one group and this group coincides with the cluster itself, as shown in Fig. \ref{fig:illustration_res_blue}.

Since cooperation among neighbors belonging to different clusters can lead to biased results \cite{Chen13JSTSP, Sayed14NOW, ChenJ14TSP}, agents should only cooperate within clusters. However, when agents have access to partial cluster information, then they only know their group neighbors but not \emph{all} cluster neighbors. Therefore, at this stage, agents can only cooperate within groups, leaving behind some potential opportunity for cooperation with neighbors from the same cluster. The purpose of this work is to devise a procedure to enable agents to identify all of their cluster neighbors, such that small groups from the same cluster can merge automatically into larger groups. At the same time, the procedure needs to be able to turn off links between different clusters in order to avoid interference. By using such a procedure, agents in multi-task networks with \emph{partial} cluster information will be able to cluster themselves in an \emph{adaptive} manner, and then solve problem \eqref{eqn:Jclusterdef} by solving \eqref{eqn:Jclusterqdef} collaboratively \emph{within} each cluster. We shall examine closely the probability of successful clustering and evaluate the steady-state mean-square-error performance for the overall learning process. In particular, we will show that the probability of correct clustering approaches one for sufficiently small step-sizes. We will also show that, with the enhanced cooperation that results from adaptive clustering, the mean-square-error performance for the network will be improved relative to the network without adaptive clustering.

\section{Models and Assumptions}

We summarize the main conditions on the network topology in the following statement.

\begin{assumption}[Topology, clusters, and groups] \hfill
\label{ass:topology} 
\begin{enumerate}
\item The network consists of $Q$ clusters, $\{\Ccal_q; q = 1, 2, \dots, Q\}$. The size of cluster $\Ccal_q$ is denoted by $N_q^c$ such that $| \Ccal_q | = N_q^c$ and $\sum_{q=1}^{Q} N_q^c = N$.

\item The underlying topology for each cluster $\Ccal_q$ is connected. Clusters are also inter-connected by some links so that agents from different clusters may still be neighbors of each other.

\item There is a total of $G$ groups, $\{\Gcal_m; m = 1, 2, \dots, G\}$, in the network. The size of group $\Gcal_m$ is denoted by $N_m^g$ such that $|\Gcal_m| = N_m^g$ and $\sum_{m=1}^G N_m^g = N$. \hfill \IEEEQED
\end{enumerate} 
\end{assumption}

\noindent It is obvious that $Q \le G \le N$ because each cluster has at least one group and each group has at least one agent.

\begin{definition}[Indexing rule]
\label{def:index}
Without loss of generality, we index groups according to their cluster indexes such that groups from the same cluster will have consecutive indexes. Likewise, we index agents according to their group indexes such that agents from the same group will have consecutive indexes. \hfill \IEEEQED
\end{definition}

According to this indexing rule, if group $\Gcal_m$ belongs to cluster $\Ccal_q$, then the next group $\Gcal_{m+1}$ will belong either to cluster $\Ccal_q$ or the next cluster, $\Ccal_{q+1}$; if agent $k$ belongs to group $\Gcal_m$, then the next agent $k+1$ will belong either to group $\Gcal_m$ or the next group, $\Gcal_{m+1}$.

Based on the problem formulation in Section \ref{sec:problem}, although agents in the same cluster are  connected, they are generally not aware of each other's cluster information, and therefore some agents in the same cluster may not cooperate in the initial stage of adaptation. On the other hand, agents in the same group are aware of each other's cluster information, so these agents can cooperate. As the learning process proceeds, agents from different groups in the same cluster will recognize each other through information sharing. Once cluster information is inferred, small groups will merge into larger groups, and agents will start cooperating with more neighbors. Through this adaptive clustering procedure, cooperative learning will grow until all agents within the same cluster become cooperative and the network performance is enhanced.

To proceed with the modeling assumptions, we introduce the following network Hessian matrix function:
\be
\label{eqn:bigHwdef}
\nabla^2 J( \sw ) \defeq \diag\{ \nabla^2 J_1(w_1), \dots, \nabla^2 J_N(w_N) \}
\ee
where the vector $\sw$ collects the parameters from across the network:
\be
\label{eqn:wnetdef}
\sw \defeq \col\{ w_1, \dots, w_N \} \in \mbbR^{NM\times 1}
\ee
We also collect the individual minimizers into a vector:
\be
\label{eqn:wonetworkdef}
\sw^o \defeq \col\{w_1^o, \dots, w_N^o\} = \col\{ \one_{N_q^c} \kron w_q^\star ; q = 1, \dots, Q \}
\ee
where the second equality is due to the indexing rule in Definition \ref{def:index}, and $\one_n$ denotes an $n \times 1$ vector with all its entries equal to one. We next list two standard assumptions for stochastic distributed learning over adaptive networks to guide the subsequent analysis in this work. One assumption relates to the analytical properties of the cost functions, and is meant to ensure well-defined minima and well-posed problems. The second assumption relates to stochastic properties of the gradient noise processes that result from approximating the true gradient vectors. This assumption is meant to ensure that the gradient approximations are unbiased and with moments satisfying some regularity conditions. Explanations and motivation for these assumptions in the context of inference problems can be found in \cite{Polyak87, Sayed14PROC, Sayed14NOW}.

\begin{assumption}[Cost functions] \hfill
\label{ass:costfunctions}
\begin{enumerate}
\item Each individual cost $J_k(w)$ is assumed to be strictly-convex, twice-differentiable, and with bounded Hessian matrix function satisfying:
\be
\lambda_{k,L} I_M  \le \nabla^2 J_k(w) \le \lambda_{k,U} I_M
\ee
where $0 \le \lambda_{k,L} \le \lambda_{k,U} < \infty$.

\item In each group $\Gcal_m$, at least one individual cost, say, $J_{k^o}(w)$, is strongly-convex, meaning that the lower bound, $\lambda_{k^o, L}$, on the Hessian of this cost is positive.

\item The network Hessian function $\nabla^2 J(\sw)$ in \eqref{eqn:bigHwdef} satisfies the Lipschitz condition:
\be
\label{eqn:lipschitzHessian}
\| \nabla^2 J(\sw_1) - \nabla^2 J(\sw_2) \| \le \kappa_H \| \sw_1 - \sw_2 \|
\ee
for any $\sw_1, \sw_2 \in \mbbR^{NM\times 1}$ and some $\kappa_H \ge 0$. \hfill \IEEEQED
\end{enumerate}
\end{assumption}

\noindent The second set of assumptions relate to conditions on the gradient noise processes. For this purpose, we introduce the filtration $\{\F_i;i \ge 0\}$ to represent the information flow that is available up to the $i$-th iteration of the learning process. The true network gradient function and its stochastic approximation are respectively denoted by
\begin{align}
\label{eqn:biggdef}
\nabla J(\sw) & \defeq \col\{ \nabla J_1(w_1), \dots, \nabla J_N(w_N) \} \\
\label{eqn:biggapproxdef}
\wh{\nabla J}(\sw) & \defeq \col\{ \wh{\nabla  J_1}(w_1), \dots, \wh{\nabla  J_N}(w_N) \}
\end{align}
The gradient noise at iteration $i$ and agent $k$ is denoted by:
\be
\label{eqn:additivegradienterror}
\bm{s}_{k,i}(\bm{w}_{k,i-1}) \defeq \wh{\nabla J_k}(\bm{w}_{k,i-1}) - \nabla J_k(\bm{w}_{k,i-1})
\ee
where $\bm{w}_{k,i-1}$ denotes the estimate for $w_k^o$ that is available to agent $k$ at iteration $i-1$. The network gradient noise is denoted by $\ssb_i(\swb_{i-1})$ and is the random process that is obtained by aggregating all noise processes from across the network into a vector:
\be
\label{eqn:bigsidef}
\ssb_i(\swb_{i-1}) \defeq \col\{ \bm{s}_{1,i}(\bm{w}_{1,i-1}), \dots, \bm{s}_{N,i}(\bm{w}_{N,i-1}) \}
\ee
Using \eqref{eqn:additivegradienterror}, we can write
\be
\label{eqn:additivegradienterrornetwork}
\wh{\nabla J}(\swb_{i-1}) =  \nabla J(\swb_{i-1}) + \ssb_i(\swb_{i-1})
\ee
We denote the conditional covariance of $\ssb_i(\swb_{i-1})$ by
\be
\label{eqn:bigRsidef}
\Rcal_{s,i}(\swb_{i-1}) \defeq \E [ \ssb_i(\swb_{i-1}) \ssb_i^\T(\swb_{i-1}) | \F_{i-1}]
\ee
where $\swb_{i-1}$ is in $\F_{i-1}$.

\begin{assumption}[Gradient noise]
\label{ass:gradienterrors}
It is assumed that the gradient noise process satisfies the following properties for any $\swb_{i-1}$ in $\F_{i-1}$:
\begin{enumerate}
\item Martingale difference \cite{Kushner03, Sayed14NOW}:
\be
\label{eqn:martingaledifference}
\E [ \ssb_i(\swb_{i-1}) | \F_{i-1} ] = 0
\ee

\item Bounded fourth-order moment \cite{Chen13TIT, Zhao13TSPasync1, Sayed14NOW}:
\be
\label{eqn:bounded4thorder}
\E [ \| \ssb_i(\swb_{i-1}) \|^4 | \F_{i-1} ] \le \alpha^2 \| \sw^o - \swb_{i-1} \|^4 + \sigma_s^4
\ee
for some $\alpha, \sigma_s \ge 0$, and where $\sw^o$ is from \eqref{eqn:wonetworkdef}.

\item Lipschitz conditional covariance function \cite{Chen13TIT, Zhao13TSPasync1, Sayed14NOW}:
\be
\label{eqn:lipschitzcovariance}
\| \Rcal_{s,i}(\sw^o) - \Rcal_{s,i}(\swb_{i-1}) \| \le \kappa_s \| \sw^o - \swb_{i-1} \|^{\gamma_s}
\ee
for some $\kappa_s \ge 0$ and $0 < \gamma_s \le 4$.

\item Convergent conditional covariance matrix \cite{Kushner03, Chen13TIT, Zhao13TSPasync1, Sayed14NOW}:
\be
\label{eqn:convergentcovariance}
\Rcal_s \defeq \lim_{i \rightarrow \infty} \Rcal_{s,i}(\sw^o) > 0
\ee
where $\Rcal_s$ is symmetric and positive definite. \hfill \IEEEQED
\end{enumerate}
\end{assumption}

It is easy to verify from \eqref{eqn:bounded4thorder} that the second-order moment of the gradient noise process also satisfies:
\be
\label{eqn:boundedvariance}
\E [ \| \ssb_i(\swb_{i-1}) \|^2 | \F_{i-1} ] \le \alpha \| \sw^o - \swb_{i-1} \|^2 + \sigma_s^2
\ee

\section{Proposed Algorithm and Main Results}
\label{sec:algorithm}

In order to minimize all cluster cost functions $\{ J_q^\clst(w); q = 1, 2, \dots, Q \}$ defined by \eqref{eqn:Jclusterqdef}, agents need to cooperate only within their clusters. Although cluster information is in general not available beforehand, groups within each cluster are available according to Assumption \ref{ass:topology}. Therefore, based on this prior information, agents can instead focus on solving the following problem based on partitioning by groups rather than by clusters:
\be
\label{eqn:Jgroupdef}
\minimize_{\{ w_m \}_{m = 1}^G } \quad J'(w_1, \dots, w_G) \defeq \sum_{m = 1}^G \sum_{k \in \Gcal_m} J_k(w_m)
\ee
with one parameter vector $w_m$ for each group $\Gcal_m$. In the extreme case when prior clustering information is totally absent, groups will collapse into singletons and problem \eqref{eqn:Jgroupdef} will reduce to the individual non-cooperative case with each agent running its own stochastic-gradient algorithm to minimize its cost function. In another extreme case when cluster information is completely available, groups will be equivalent to clusters and problem \eqref{eqn:Jgroupdef} will reduce to the formation in \eqref{eqn:Jclusterdef}. Therefore, problem \eqref{eqn:Jgroupdef} is general and includes many scenarios of interest as special cases. We shall argue in the sequel that during the process of solving \eqref{eqn:Jgroupdef}, agents will be able to gradually learn their neighbors' clustering information. This information will be exploited by a \emph{separate} learning procedure by each group to dynamically involve more neighbors (from outside the group) in local cooperation. In this way, we will be able to establish analytically that, with high probability, agents will be able to successfully solve problem \eqref{eqn:Jclusterdef} (and not just \eqref{eqn:Jgroupdef}) even \emph{without} having the complete clustering information in advance.

We motivate the algorithm by examining problem \eqref{eqn:Jgroupdef}. Since the groups $\{\Gcal_m\}$ are already formed and they are disjoint, problem \eqref{eqn:Jgroupdef} can be decomposed into $G$ separate optimization problems, one for each group:
\be
\label{eqn:Jgroupmdef}
\minimize_w \quad J_m^g(w) \defeq \sum_{k \in \Gcal_m} J_k(w) 
\ee
with $m = 1, 2, \dots, G$. For any agent $k$ belonging to group $\Gcal_m$ in cluster $\Ccal_q$, i.e., $k \in \Gcal_m \subseteq \Ccal_q$, it is easy to verify that
\be
\{k\} \subseteq \Ncal_k \cap \Gcal_m \subseteq \Ncal_k \cap \Ccal_q = \Ncal_k^+
\ee
Then, agents in group $\Gcal_m$ can seek the solution of $J_m^g(w)$ in \eqref{eqn:Jgroupmdef} by using the adapt-then-combine (ATC) diffusion learning strategy over $\Gcal_m$, namely,
\begin{subequations}
\begin{align}
\label{eqn:distributedadaptgroup}
\bm{\psi}_{k,i} & = \bm{w}_{k,i-1} - \mu_k \wh{\nabla J_k} (\bm{w}_{k,i-1}) \\
\label{eqn:distributedcombinegroup}
\bm{w}_{k,i} & = \sum_{\ell \in \Ncal_k \cap \Gcal_m} a_{\ell k} \bm{\psi}_{\ell,i}
\end{align}
\end{subequations}
for all $k\in\Gcal_m$, where $\mu_k > 0$ denotes the step-size parameter, and $\{ a_{\ell k} \}$ are  convex combination coefficients that satisfy
\be
\label{eqn:alkcondition}
\left\{
\begin{aligned}
a_{\ell k} > 0 & \;\; \mbox{if} \;\; \ell \in \Ncal_k \cap \Gcal_m \\
a_{\ell k} = 0 & \;\; \mbox{otherwise} 
\end{aligned}
\right., \;\; \mbox{and} \;\; \sum_{\ell = 1}^{N} a_{\ell k} = 1
\ee
Moreover, $\bm{w}_{k,i}$ denotes the random estimate computed by agent $k$ at iteration $i$, and $\bm{\psi}_{k,i}$ is the intermediate iterate. We collect the coefficients $\{a_{\ell k}\}$ into a matrix $A \defeq [a_{\ell k}]_{\ell, k = 1}^N$. Obviously, $A$ is a left-stochastic matrix, namely,
\be
A^\T \one_N = \one_N
\ee
We collect the iterates generated from \eqref{eqn:distributedadaptgroup}--\eqref{eqn:distributedcombinegroup} by group $\Gcal_m$ into a vector:
\be
\label{eqn:wqigroupdef}
\swb_{m,i} \defeq \col\{ \bm{w}_{k,i}; k \in \Gcal_m \} \in \mbbR^{N_m^g M \times 1}
\ee
where $N_m^g$ is the size of $\Gcal_m$. According to the indexing rule from Definition \ref{def:index} for agents and groups, the estimate for the entire network from \eqref{eqn:distributedadaptgroup}--\eqref{eqn:distributedcombinegroup} can be obtained by stacking the group estimates $\{\swb_{m,i}\}$:
\be
\label{eqn:winetworkdef}
\swb_i \defeq \col\{ \bm{w}_{1,i}, \dots, \bm{w}_{N,i} \} = \col\{ \swb_{1,i}, \dots, \swb_{G,i} \}
\ee

The procedure used by the agents to enlarge their groups will be based on the following results to be established in later sections. We will show in Theorem \ref{theorem:normaldistribution} that after sufficient iterations, i.e., as $i \rightarrow \infty$, and for small enough step-sizes, i.e., $\mu_k \ll 1$ for all $k$, the network estimate $\swb_i$ defined by \eqref{eqn:winetworkdef} exhibits a distribution that is \emph{nearly} Gaussian:
\be
\swb_i \sim \mbbN(\sw^o, \; \mumax \Pi)
\ee
where $\mbbN(\phi, \Psi)$ denotes a Gaussian distribution with mean $\phi$ and covariance $\Psi$, $\sw^o$ is from \eqref{eqn:wonetworkdef}, 
\be
\label{eqn:mumaxdef}
\mumax \defeq \max_{k = 1, \dots, N} \mu_k
\ee
and $\Pi \in \mbbR^{ NM \times NM }$ is a symmetric, positive semi-definite matrix, independent of $\mumax$, and defined later by \eqref{eqn:Pinetworkdef}. In addition, we will show that for any pair of agents from two different groups, for example, $k \in \Gcal_m$ and $\ell \in \Gcal_n$, where the two groups $\Gcal_m$ and $\Gcal_n$ may or may not originate from the same cluster, the difference between their estimates will also be distributed approximately according to a Gaussian distribution: 
\be
\label{eqn:wdiffGaussian}
\bm{w}_{\ell,i} - \bm{w}_{k,i} \sim \mbbN( w_\ell^o - w_k^o, \; \mumax \Delta_{\ell,k} )
\ee
where
\be
\Delta_{\ell,k} \defeq \Pi_{\ell,\ell} + \Pi_{k,k} - \Pi_{k, \ell} - \Pi_{\ell, k}
\ee
is a symmetric, positive semi-definite matrix, and $\Pi_{k,\ell}$ denotes the $(k,\ell)$-th block of $\Pi$ with block size $M \times M$. These results are useful for inferring the cluster information for agents $k$ and $\ell$. Indeed, since the covariance matrix in \eqref{eqn:wdiffGaussian} is on the order of $\mumax$, the probability density function (pdf) of $\bm{w}_{\ell,i} - \bm{w}_{k,i}$ will concentrate around its mean, namely, $w_\ell^o - w_k^o$, when $\mumax$ is sufficiently small. Therefore, if these agents belong to the same cluster such that $w_\ell^o = w_k^o$, then we will be able to conclude from \eqref{eqn:wdiffGaussian} that with high probability, $\| \bm{w}_{\ell,i} - \bm{w}_{k,i} \|^2 = O(\mumax)$. On the other hand, if the agents belong to different clusters such that $w_\ell^o \neq w_k^o$, then it will hold with high probability that $\| \bm{w}_{\ell,i} - \bm{w}_{k,i} \|^2 = O(\mumax^0)$. This observation suggests that a hypothesis test can be formulated for agents $\ell$ and $k$ to determine whether or not they are members of the same cluster: 
\be
\label{eqn:decisionrule}
\| \bm{w}_{\ell,i} - \bm{w}_{k,i} \|^2 \overset{\mbbH_0}{\underset{\mbbH_1}{\lessgtr}} \theta_{k,\ell}
\ee
where $\mbbH_0$ denotes the hypothesis $w_\ell^o = w_k^o$, $\mbbH_1$ denotes the hypothesis $w_\ell^o \neq w_k^o$, and $\theta_{k,\ell} > 0$ is a predefined threshold. Both agents $\ell$ and $k$ will test \eqref{eqn:decisionrule} to reach a symmetric pattern of cooperation. Since $\bm{w}_{k,i}$ and $\bm{w}_{\ell,i}$ are accessible through local interactions within neighborhoods, the hypothesis test \eqref{eqn:decisionrule} can be carried out in a distributed manner. We will further show that the probabilities for both types of errors incurred by \eqref{eqn:decisionrule}, i.e., the false alarm (Type-I) and the missing detection (Type-II) errors, decay at exponential rates, namely,
\begin{align*}
\mbox{Type-I:}  & \; \Pr[ \| \bm{w}_{\ell,i} - \bm{w}_{k,i} \|^2 > \theta_{k,\ell} | w_\ell^o = w_k^o ] \le O(e^{-c_1/\mumax}) \\
\mbox{Type-II:} & \; \Pr[ \| \bm{w}_{\ell,i} - \bm{w}_{k,i} \|^2 < \theta_{k,\ell} | w_\ell^o \neq w_k^o ] \le O(e^{-c_2/\mumax})
\end{align*}
for some constants $c_1 > 0$ and $c_2 > 0$. Therefore, for long enough iterations and small enough step-sizes, agents are able to successfully infer the cluster information with very high probability.

The clustering information acquired at each iteration $i$ is used by the agents to dynamically adjust their \emph{inferred} cluster neighborhoods. The $\bm{\Ncal}_{k,i}^+$ for agent $k\in\Gcal_m$ at iteration $i$ consists of the neighbors that are accepted under hypothesis $\mbbH_0$ and the other neighbors that are already in the same group:
\be
\label{eqn:neighborhoodki+def}
\bm{\Ncal}_{k,i}^+ \defeq \{\ell \in \Ncal_k; \| \bm{w}_{\ell,i} - \bm{w}_{k,i} \|^2 < \theta_{k,\ell} \;\; \mbox{or} \;\; \ell \in \Gcal_m  \}
\ee
Using these dynamically-evolving cluster neighborhoods, we introduce a \emph{separate} ATC diffusion learning strategy:
\begin{subequations}
\begin{align}
\label{eqn:distributedadaptdynamic}
\bm{\psi}_{k,i}' & = \bm{w}_{k,i-1}' - \mu_k \wh{\nabla J_k} (\bm{w}_{k,i-1}') \\
\label{eqn:distributedcombinedynamic}
\bm{w}_{k,i}' & = \sum_{\ell \in \bm{\Ncal}_{k,i-1}^+ } \bm{a}_{\ell k}'(i-1) \bm{\psi}_{\ell,i}'
\end{align}
\end{subequations}
where the combination coefficients $\{ \bm{a}_{ \ell k}'(i-1)\}$ become random because $\bm{\Ncal}_{k,i-1}^+$ is random and may vary over iterations. The iteration index $i-1$ is used for these coefficients to enforce causality. Since $\Ncal_k \cap \Gcal_m$ denotes the neighbors of agent $k$ that are already in the same group $\Gcal_m$ as $k$, it is obvious that $\Ncal_k \cap \Gcal_m \subseteq \bm{\Ncal}_{k,i-1}^+$ for any $i\ge0$. This means that recursion \eqref{eqn:distributedadaptdynamic}--\eqref{eqn:distributedcombinedynamic} generally involves a larger range of interactions among agents than the first recursion \eqref{eqn:distributedadaptgroup}--\eqref{eqn:distributedcombinegroup}. We summarize the algorithm in the following listing.

\vspace{0.5\baselineskip}
\begin{algorithmic}
\hrule
\vskip 0.1\baselineskip
\hrule
\vskip 0.2\baselineskip
\STATE \textbf{Distributed clustering and learning over networks}
\vskip 0.2\baselineskip
\hrule
\vskip 0.2\baselineskip

\STATE Initialization: $\bm{w}_{k,-1} = \bm{w}_{k,-1}' = 0$ and $\bm{\Ncal}_{k,-1}^+ = \Ncal_k \cap \Gcal_m$ for all $k \in \Gcal_m$ and $m = 1, 2, \dots, G$. 

\FOR{$i\ge0$}

\STATE (1) Each agent $k$ updates $\bm{w}_{k,i}$ according to the first recursion \eqref{eqn:distributedadaptgroup}--\eqref{eqn:distributedcombinegroup} over $\Ncal_k \cap \Gcal_m$.

\STATE (2) Each agent $k$ updates $\bm{w}_{k,i}'$ according to the second recursion \eqref{eqn:distributedadaptdynamic}--\eqref{eqn:distributedcombinedynamic} over $\bm{\Ncal}_{k,i-1}^+$.

\STATE (3) Each agent $k$ updates $\bm{\Ncal}_{k,i}^+$ by using \eqref{eqn:neighborhoodki+def} with $\{\bm{w}_{\ell,i}; \ell \in \Ncal_k\}$ from step (1).

\ENDFOR

\vskip 0.2\baselineskip
\hrule
\vskip 0.1\baselineskip
\hrule
\end{algorithmic}

\vspace{0.5\baselineskip}

\section{Mean-Square-Error Analysis}
\label{sec:performancegroup}

In the previous section, we mentioned that Theorem \ref{theorem:normaldistribution} in Section \ref{subsection:normalpdf} is the key result for the design of the clustering criterion. To arrive this theorem, we shall derive two useful intermediate results, Lemmas \ref{lemma:approxerrorrecursion} and \ref{lemma:lowrankapprox}, in this section. These two results are related to the MSE analysis of the first recursion \eqref{eqn:distributedadaptgroup}--\eqref{eqn:distributedcombinegroup}, which is used in step (1) of the proposed algorithm. We shall therefore examine the stability and the MSE performance of recursion \eqref{eqn:distributedadaptgroup}--\eqref{eqn:distributedcombinegroup} in the sequel. It is clear that the evolution of this recursion is not influenced by the other two steps. Thus, we can study recursion \eqref{eqn:distributedadaptgroup}--\eqref{eqn:distributedcombinegroup} independently.

\subsection{Network Error Recursion}
Using model \eqref{eqn:additivegradienterrornetwork}, recursion \eqref{eqn:distributedadaptgroup}--\eqref{eqn:distributedcombinegroup} leads to
\be
\label{eqn:uniformiteration}
\swb_i = \Acal^\T \swb_{i-1} - \Acal^\T \Mcal \,  \nabla J(\swb_{i-1}) - \Acal^\T \Mcal \ssb_i(\swb_{i-1})
\ee
where $\swb_i$ is from \eqref{eqn:winetworkdef}, $ \nabla J(\cdot)$ is from \eqref{eqn:biggdef}, $\ssb_i(\cdot)$ is from \eqref{eqn:bigsidef}, and 
\begin{align}
\label{eqn:bigMdef}
\Mcal & \defeq \diag\{ \mu_1, \dots, \mu_N \} \kron I_M \\
\label{eqn:bigAdef}
\Acal & \defeq A \kron I_M
\end{align}
We introduce the network error vector:
\be
\label{eqn:networkerrorvectordef}
\wt{\swb}_i \defeq \sw^o - \swb_i = \col\{ \wt{\bm{w}}_{1,i}, \dots, \wt{\bm{w}}_{N,i} \}
\ee
where $\sw^o$ is from \eqref{eqn:wonetworkdef}, and the individual error vectors:
\be
\wt{\bm{w}}_{k,i} \defeq w_k^o - \bm{w}_{k,i}
\ee
Using the mean-value theorem \cite{Polyak87, Sayed14NOW}, we can write
\be
\label{eqn:gwlinearize}
 \nabla J(\swb_{i-1}) =  \nabla J(\sw^o) - \left[ \int_{0}^{1} \nabla^2 J(\sw^o - t \wt{\swb}_{i-1}) dt \right] \wt{\swb}_{i-1}
\ee
where $\nabla^2 J(\cdot)$ is from \eqref{eqn:bigHwdef}. Since $\sw^o$ consists of individual minimizers throughout the network, it follows that $ \nabla J(\sw^o) = 0$. Let
\be
\label{eqn:bigbarHdef}
\bm{\Hcal}_{i-1} \defeq \int_{0}^{1} \nabla^2 J(\sw^o - t \wt{\swb}_{i-1}) dt = \diag\{ \bm{H}_{k,i-1} \}_{k=1}^{N}
\ee
where
\be
\label{eqn:Hkodef}
\bm{H}_{k,i-1} \defeq \int_{0}^{1} \nabla^2 J_k(w_k^o - t \wt{\bm{w}}_{k,i-1}) dt
\ee
Then, expression \eqref{eqn:gwlinearize} can be rewritten as
\be
\label{eqn:gwlinearizenew}
 \nabla J(\swb_{i-1}) = - \bm{\Hcal}_{i-1} \wt{\swb}_{i-1}
\ee
where it is worth noting that the random matrix $\bm{\Hcal}_{i-1}$ is dependent on $\wt{\swb}_{i-1}$. Substituting \eqref{eqn:gwlinearizenew} into \eqref{eqn:uniformiteration} yields:
\be
\label{eqn:uniformiteration1}
\swb_i = \Acal^\T \swb_{i-1} + \Acal^\T \Mcal \bm{\Hcal}_{i-1} \wt{\swb}_{i-1} - \Acal^\T \Mcal \ssb_i(\swb_{i-1})
\ee
By the indexing rule from Definition \ref{def:index} and condition \eqref{eqn:alkcondition}, the combination matrix $A$ possesses a block diagonal structure:
\be
\label{eqn:Ablockstructure}
A = \diag\{ A_m; m = 1, \dots, G \}
\ee
where each $A_m$ collects the combination coefficients within group $\Gcal_m$:
\be
A_m \defeq [a_{\ell k}; \ell, k \in \Gcal_m]
\ee
From the same condition \eqref{eqn:alkcondition}, we have that each $A_m$ is itself an 
$N_m^g \times N_m^g$ left-stochastic matrix:
\be
\label{eqn:Am1eq1}
A_m^\T \one_{N_m^g} = \one_{N_m^g}
\ee
If group $\Gcal_m$ is a subset of cluster $\Ccal_q$, then the agents in $\Gcal_m$ share the same minimizer at $w_q^\star$. Thus, for any $\Gcal_m \subseteq \Ccal_q$, let
\be
\label{eqn:wmgroupdef}
\sw_m^o \defeq \col\{w_k^o; k \in \Gcal_m \} = \one_{N_m^g} \kron w_q^\star
\ee
It follows from \eqref{eqn:Am1eq1} and \eqref{eqn:wmgroupdef} that
\be
\label{eqn:AmImwm}
(A_m^\T \kron I_M) \sw_m^o = (A_m^\T \kron I_M) (\one_{N_m^g} \kron w_q^\star) = \sw_m^o
\ee
Again, from the indexing rule in Definition \ref{def:index}, we have from \eqref{eqn:wonetworkdef} and \eqref{eqn:wmgroupdef} that
\be
\label{eqn:wogroupclusters}
\sw^o = \col\{\sw_m^o; m = 1, \dots, G \}
\ee
Then, it follows from \eqref{eqn:Ablockstructure} and  \eqref{eqn:wogroupclusters} that
\be
\label{eqn:Awoequalwo}
\Acal^\T \sw^o = \begin{bmatrix}
A_1^\T \kron I_M & & \\
& \ddots & \\
& & A_G^\T \kron I_M \\
\end{bmatrix} \begin{bmatrix}
\sw_1^o \\
\vdots \\
\sw_G^o \\
\end{bmatrix} = \sw^o
\ee
Accordingly, subtracting $\sw^o$ from both sides of \eqref{eqn:uniformiteration1} and using \eqref{eqn:Awoequalwo} yields the network error recursion:
\be
\label{eqn:uniformiteration2}
\wt{\swb}_i = \Acal^\T ( I_{NM} - \Mcal \bm{\Hcal}_{i-1} ) \wt{\swb}_{i-1} + \Acal^\T \Mcal \ssb_i(\swb_{i-1})
\ee
We denote the coefficient matrix appearing in \eqref{eqn:uniformiteration2} by
\be
\label{eqn:bigBdef}
\bm{\Bcal}_{i-1} \defeq \Acal^\T ( I_{NM} - \Mcal \bm{\Hcal}_{i-1} )
\ee
Then, the network error recursion \eqref{eqn:uniformiteration2} can be rewritten as
\be
\label{eqn:networkerrorrecursiondef}
\wt{\swb}_i = \bm{\Bcal}_{i-1} \wt{\swb}_{i-1} + \Acal^\T \Mcal \ssb_i(\swb_{i-1})
\ee
We further introduce the group quantities:
\begin{align}
\label{eqn:bigAqdef}
\Acal_m & \defeq A_m \kron I_M \\
\label{eqn:wqidef}
\swb_{m,i} & \defeq \col\{ \bm{w}_{k,i}; k \in \Gcal_m \} \in \mbbR^{N_m^g M \times 1} \\
\label{eqn:bigMqdef}
\Mcal_m & \defeq \diag\{\mu_k; k \in \Gcal_m\} \kron I_M \\
\label{eqn:bigHqdef}
\bm{\Hcal}_{m,i-1} & \defeq \diag\{ \bm{H}_{k,i-1}; k \in \Gcal_m \} \\
\label{eqn:bigsqdef}
\ssb_{m,i}(\swb_{m,i-1}) & \defeq \col\{ \bm{s}_{k,i}(\bm{w}_{k,i-1}); k \in \Gcal_m \}
\end{align}
It follows from the indexing rule in Definition \ref{def:index} that
\begin{align}
\label{eqn:bigAandAq}
\Acal & = \diag \{ \Acal_1, \dots, \Acal_G \} \\
\label{eqn:wiandwqi}
\swb_i & = \col\{ \swb_{1,i}, \dots, \swb_{G,i} \} \\
\label{eqn:bigMandMq}  
\Mcal & = \diag\{ \Mcal_1, \dots, \Mcal_G \} \\
\label{eqn:bigHandHq}
\bm{\Hcal}_{i-1} & = \diag\{ \bm{\Hcal}_{1,i-1}, \dots, \bm{\Hcal}_{G,i-1} \} \\
\label{eqn:bigsandsq}
\ssb_i(\swb_{i-1}) & = \col\{ \ssb_{1,i}(\swb_{1,i-1}), \dots, \ssb_{G,i}(\swb_{G,i-1}) \}
\end{align}
Using \eqref{eqn:bigAandAq}--\eqref{eqn:bigHandHq}, the matrix $\bm{\Bcal}_{i-1}$ in \eqref{eqn:bigBdef} can be expressed by
\be
\label{eqn:bigBandBq}
\bm{\Bcal}_{i-1} = \diag\{ \bm{\Bcal}_{1,i-1}, \dots, \bm{\Bcal}_{G,i-1} \}
\ee
where
\be
\label{eqn:bigBqdef}
\bm{\Bcal}_{m,i-1} \defeq \Acal_m^\T (I_{N_m^g M} - \Mcal_m \bm{\Hcal}_{m,i-1}) 
\ee
Due to the block structures in \eqref{eqn:bigAandAq}--\eqref{eqn:bigBandBq}, groups are isolated from each other. Therefore, using these group quantities, the network error recursion \eqref{eqn:networkerrorrecursiondef} is automatically decoupled into a total of $G$ group error recursions, where the $m$-th recursion is given by
\be
\label{eqn:networkerrorrecursiongroup}
\wt{\swb}_{m,i} = \bm{\Bcal}_{m,i-1} \wt{\swb}_{m,i-1} + \Acal_m^\T \Mcal_m \ssb_{m,i}(\swb_{m,i-1})
\ee

\subsection{Mean-Square and Mean-Fourth-Order Error Stability}
\label{subsection:stability}
The stability of the network error recursion \eqref{eqn:networkerrorrecursiondef} is now reduced to studying the stability of the group recursions \eqref{eqn:networkerrorrecursiongroup}. Recall that, by Definition \ref{def:group}, the agents in each group are connected. Moreover, condition \eqref{eqn:alkcondition} implies that agents in each group have non-trivial self-loops, meaning that $a_{kk} > 0$ for all $k \in \Gcal_m$. It follows that each $A_m$ is a primitive matrix \cite{BermanPF, Sayed14PROC} (which is satisfied as long as there exists at least one $a_{kk} > 0$ in each group). Under these conditions, we are now able to ascertain the stability of the second and fourth-order error moments of the network error recursion \eqref{eqn:networkerrorrecursiondef} by appealing to results from \cite{Sayed14NOW}. 

\begin{theorem}[Stability of error moments]
\label{theorem:stability}
For sufficiently small step-sizes, the network error recursion \eqref{eqn:networkerrorrecursiondef} is mean-square and mean-fourth-order stable in the sense that
\begin{align}
\label{eqn:varianceasymptoticbound}
\limsup_{i \rightarrow \infty} \; \E \| \wt{\swb}_i \|^2 & = O(\mumax) \\
\label{eqn:4thorderasymptoticbound}
\limsup_{i \rightarrow \infty} \; \E \| \wt{\swb}_i \|^4 & = O(\mumax^2) 
\end{align}
\end{theorem}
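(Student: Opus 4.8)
The plan is to exploit the block-diagonal structure established in \eqref{eqn:bigBandBq} and thereby reduce the stability question for the full network error recursion \eqref{eqn:networkerrorrecursiondef} to the stability of each decoupled group recursion \eqref{eqn:networkerrorrecursiongroup}. Each group $\Gcal_m$ is a connected sub-network (Definition \ref{def:group}); its combination matrix $A_m$ is primitive since every agent carries a self-loop ($a_{kk}>0$) and the group is connected; and by Assumption \ref{ass:costfunctions} the aggregate group cost $J_m^g(w)=\sum_{k\in\Gcal_m}J_k(w)$ is strongly convex (it has at least one strongly convex component $J_{k^o}$ while the remaining components are convex) with a Hessian that is bounded and globally Lipschitz. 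Restricting Assumption \ref{ass:gradienterrors} to the agents of $\Gcal_m$ likewise shows that the per-group gradient noise is a martingale difference with bounded second- and fourth-order conditional moments and a Lipschitz, convergent conditional covariance. In other words, each group recursion is exactly an ATC diffusion recursion of the standard form analyzed in \cite{Sayed14NOW}, so the bulk of the argument amounts to checking these hypotheses and then invoking the corresponding mean-square and mean-fourth-order stability results there. The only care needed is that the moment bounds \eqref{eqn:bounded4thorder}--\eqref{eqn:boundedvariance} are stated at the network level; this is accommodated by keeping the energy recursions at the network level and using the block structure of $\Acal$, $\Mcal$, $\bm{\Hcal}_{i-1}$, and hence $\bm{\Bcal}_{i-1}$, only to control the deterministic part of the dynamics.

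Concretely, I would first analyze the coefficient matrix $\bm{\Bcal}_{m,i-1}=\Acal_m^\T(I_{N_m^g M}-\Mcal_m\bm{\Hcal}_{m,i-1})$. Using the eigenstructure of the primitive left-stochastic matrix $A_m$ --- a simple eigenvalue at $1$ with right eigenvector $\one_{N_m^g}$ (the associated mode representing the in-group disagreement relative to the group centroid) and all other eigenvalues strictly inside the unit circle --- I would introduce a fixed invertible similarity transformation that block-triangularizes $\Acal_m^\T$ and separates the slow ``centroid'' mode from the remaining fast modes. In the transformed coordinates, the fast block is contractive for all small $\mumax$ because, as $\mumax\to 0$, $\bm{\Bcal}_{m,i-1}\to\Acal_m^\T$, so it inherits the subdominant spectral radius of $A_m$; the slow block behaves like $I_M$ minus an $O(\mumax)$ multiple of the aggregated Hessian and is contractive with factor $1-O(\mumax)$ precisely because $J_m^g$ is strongly convex; and the coupling between the two blocks introduced by $\Mcal_m\bm{\Hcal}_{m,i-1}$ is $O(\mumax)$ and can be absorbed. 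This yields, in a suitable block-weighted norm, a uniform-in-$i$ contraction $\|\bm{\Bcal}_{m,i-1}\,x\|^2\le(1-O(\mumax))\|x\|^2$, and hence the same for the block-diagonal $\bm{\Bcal}_{i-1}$ at the network level.

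With this contraction in hand, I would take squared norms on both sides of \eqref{eqn:networkerrorrecursiondef}, condition on $\F_{i-1}$, use the martingale-difference property \eqref{eqn:martingaledifference} to eliminate the cross term (both $\bm{\Bcal}_{i-1}$ and $\wt{\swb}_{i-1}$ are $\F_{i-1}$-measurable), and bound the noise contribution using \eqref{eqn:boundedvariance} by $O(\mumax^2)(\alpha\,\E\|\wt{\swb}_{i-1}\|^2+\sigma_s^2)$. This produces a scalar inequality recursion $\E\|\wt{\swb}_i\|^2\le(1-O(\mumax))\,\E\|\wt{\swb}_{i-1}\|^2+O(\mumax^2)$, whose fixed point is $O(\mumax)$, which gives \eqref{eqn:varianceasymptoticbound}. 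For \eqref{eqn:4thorderasymptoticbound} I would repeat the argument with fourth powers: applying the same contraction together with Jensen's inequality to split the quadratic energy before squaring it, the martingale-difference property, and the bounded fourth-order moment \eqref{eqn:bounded4thorder}, I would obtain $\E\|\wt{\swb}_i\|^4\le(1-O(\mumax))\,\E\|\wt{\swb}_{i-1}\|^4+O(\mumax^2)\,\E\|\wt{\swb}_{i-1}\|^2+O(\mumax^3)$, and then substitute the just-established bound $\E\|\wt{\swb}_{i-1}\|^2=O(\mumax)$ into the middle term to conclude $\limsup_i\E\|\wt{\swb}_i\|^4=O(\mumax^2)$. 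Since $\mumax=\max_k\mu_k$ dominates the largest step-size of every group, all order estimates are uniform across the network. The main obstacle is the second step: establishing that the aggregated adapt-and-combine dynamics contracts the group-centroid error at rate $1-O(\mumax)$ even though individual agents may have only convex (and possibly locally flat) costs, which is what forces the eigen-decomposition and the careful tracking of the $O(\mumax)$ inter-mode coupling; everything else is a routine adaptation of the established diffusion-stability machinery.
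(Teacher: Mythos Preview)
Your proposal is correct and follows essentially the same approach as the paper: both exploit the block-diagonal structure \eqref{eqn:bigBandBq} to reduce the network recursion to the decoupled group recursions \eqref{eqn:networkerrorrecursiongroup}, verify that each group has a primitive left-stochastic $A_m$ and a strongly convex aggregate cost, and then appeal to the standard diffusion stability results in \cite{Sayed14NOW}. The only difference is one of presentation: the paper simply cites Theorems~9.1 and~9.2 of \cite{Sayed14NOW} for each group, whereas you sketch the internal eigen-decomposition/contraction argument that underlies those theorems and, in doing so, handle the network-level formulation of the noise bounds \eqref{eqn:bounded4thorder}--\eqref{eqn:boundedvariance} a bit more explicitly.
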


\begin{proof}
It is obvious that the network error recursion \eqref{eqn:networkerrorrecursiondef} is mean-square and mean-fourth-order stable if, and only if, each group error recursion \eqref{eqn:networkerrorrecursiongroup} is stable in a similar sense. From Assumption \ref{ass:costfunctions}, we know that there exists at least one strongly-convex cost in each group. Since the combination matrix $A_m$ for each group is primitive and left-stochastic, we can now call upon Theorems 9.1 and 9.2 from \cite[p. 508, p. 522]{Sayed14NOW} to conclude that every group error recursion is mean-square and mean-fourth-order stable, namely,
\begin{align}
\label{eqn:varianceasymptoticboundgroup}
\limsup_{i \rightarrow \infty} \; \E \| \wt{\swb}_{m,i} \|^2 & = O(\mumax) \\
\label{eqn:4thorderasymptoticboundgroup}
\limsup_{i \rightarrow \infty} \; \E \| \wt{\swb}_{m,i} \|^4 & = O(\mumax^2) 
\end{align}
from which \eqref{eqn:varianceasymptoticbound} and \eqref{eqn:4thorderasymptoticbound} follow.
\end{proof}

\subsection{Long-Term Model}
\label{subsection:steadystatecov}
Once network stability is established, we can proceed to assess the performance of the adaptive clustering and learning procedure. To do so, it becomes more convenient to first introduce a long-term model for the error dynamics \eqref{eqn:networkerrorrecursiondef}. Note that recursion \eqref{eqn:networkerrorrecursiondef} represents a non-linear, time-variant, and stochastic system that is driven by a state-dependent random noise process. Analysis of recursion \eqref{eqn:networkerrorrecursiondef} is facilitated by noting (see Lemma \ref{lemma:approxerrorrecursion} below) that when the step-size parameter $\mumax$ is small enough, the mean-square behavior of \eqref{eqn:networkerrorrecursiondef} in steady-state, when $i \gg 1$, can be well approximated by the behavior of the following long-term model:
\be
\label{eqn:longtermerrorrecursion}
\wt{\swb}_i^\longterm = \Bcal \; \wt{\swb}_{i-1}^\longterm + \Acal^\T \Mcal \ssb_i(\swb_{i-1})
\ee
where we replaced the random matrix $\bm{\Bcal}_{i-1}$ in \eqref{eqn:networkerrorrecursiondef} by the constant matrix 
\be
\label{eqn:bigBfixeddef}
\Bcal \defeq \Acal^\T ( I_{NM} - \Mcal \Hcal )
\ee
In \eqref{eqn:bigBfixeddef}, the matrix $\Hcal$ is defined by
\be
\label{eqn:bigHdef}
\Hcal \defeq \diag\{ H_1, \dots, H_N \}
\ee
where
\be
\label{eqn:Hkdef}
H_k \defeq \nabla^2 J_k(w_k^o)
\ee
Note that the long-term model \eqref{eqn:longtermerrorrecursion} is now a \emph{linear time-invariant} system, albeit one that continues to be driven by the \emph{same} random noise process as in \eqref{eqn:networkerrorrecursiondef}. Similarly to the original error recursion \eqref{eqn:networkerrorrecursiondef}, the long-term recursion \eqref{eqn:longtermerrorrecursion} can also be decoupled into $G$ recursions, one for each group:
\be
\label{eqn:longtermerrorrecursiongroup}
\wt{\swb}_{m,i}^\longterm = \Bcal_m \, \wt{\swb}_{m,i-1}^\longterm + \Acal_m^\T \Mcal_m \ssb_{m,i}(\swb_{m,i-1})
\ee
where
\begin{align}
\wt{\swb}_{m,i}^\longterm & \defeq \col\{ \wt{\bm{w}}_{k,i}^\longterm; k \in \Gcal_m \} \in \mbbR^{N_m^g M \times 1} \\
\Bcal_m & \defeq \Acal_m^\T ( I_{N_m^g M} - \Mcal_m \Hcal_m ) \\
\label{eqn:bigHmdef}
\Hcal_m & \defeq \diag\{ H_k; k \in \Gcal_m \} \\
\sw_m^o & \defeq \col\{ w_k^o; k \in \Gcal_m \}
\end{align}

\begin{lemma}[Accuracy of long-term model]
\label{lemma:approxerrorrecursion}
For sufficiently small step-sizes, the evolution of the long-term model \eqref{eqn:longtermerrorrecursion} is close to the original error recursion \eqref{eqn:networkerrorrecursiondef} in MSE sense:
\be
\label{eqn:longtermmodelgap}
\limsup_{i\rightarrow \infty} \E \| \wt{\swb}_i - \wt{\swb}_i^\longterm \|^2 = O(\mumax^2)
\ee
\end{lemma}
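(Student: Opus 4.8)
The plan is to subtract the long-term recursion \eqref{eqn:longtermerrorrecursion} from the original error recursion \eqref{eqn:networkerrorrecursiondef} and track the difference. Since both recursions are driven by the \emph{same} noise term $\Acal^\T \Mcal \ssb_i(\swb_{i-1})$, that term cancels. Writing $\bm{\delta}_i \defeq \wt{\swb}_i - \wt{\swb}_i^\longterm$ and using $\bm{\Bcal}_{i-1} = \Bcal + \Acal^\T \Mcal ( \Hcal - \bm{\Hcal}_{i-1} )$, one obtains
\be
\bm{\delta}_i = \Bcal \, \bm{\delta}_{i-1} + \Acal^\T \Mcal ( \Hcal - \bm{\Hcal}_{i-1} ) \wt{\swb}_{i-1}.
\ee
The idea is that the driving term here is \emph{quadratically} small: the factor $\Mcal$ contributes $O(\mumax)$, and the difference $\Hcal - \bm{\Hcal}_{i-1}$ is itself small because, by the Lipschitz condition \eqref{eqn:lipschitzHessian} together with the definitions \eqref{eqn:bigHdef}--\eqref{eqn:Hkdef} and \eqref{eqn:bigbarHdef}, we have $\| \Hcal - \bm{\Hcal}_{i-1} \| \le \kappa_H \| \wt{\swb}_{i-1} \| / 2$ (the factor $1/2$ coming from $\int_0^1 t\, dt$). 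Hence the perturbation term is bounded in norm by a constant times $\mumax \, \| \wt{\swb}_{i-1} \|^2$.

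The key steps, in order, are: (i) establish the recursion for $\bm{\delta}_i$ above and bound the perturbation term by $c\,\mumax\,\|\wt{\swb}_{i-1}\|^2$ for some constant $c$ depending on $\kappa_H$ and the combination weights; (ii) since $\Bcal$ is block-diagonal with blocks $\Bcal_m = \Acal_m^\T(I - \Mcal_m \Hcal_m)$ and each $A_m$ is primitive and left-stochastic with at least one strongly-convex cost in the group, invoke the standard spectral estimate (as in \cite{Sayed14NOW}) that $\rho(\Bcal) \le 1 - O(\mumax)$, or more precisely that there is a block-weighted norm in which $\|\Bcal\| \le 1 - \sigma_{11}\mumax$ for some $\sigma_{11}>0$; (iii) take squared norms, apply Jensen/the inequality $\|x+y\|^2 \le (1+t)\|x\|^2 + (1+t^{-1})\|y\|^2$ with $t = \Theta(\mumax)$ chosen to absorb the cross term, and take conditional then full expectations to get
\be
\E\|\bm{\delta}_i\|^2 \le (1 - O(\mumax))\, \E\|\bm{\delta}_{i-1}\|^2 + O(\mumax)\cdot O(1/\mumax)\cdot \big(\mumax\big)^2 \cdot \E\|\wt{\swb}_{i-1}\|^4;
\ee
here the $O(1/\mumax)$ is the gain from iterating a stable recursion with contraction factor $1-O(\mumax)$, and $\E\|\wt{\swb}_{i-1}\|^4 = O(\mumax^2)$ in the limit by Theorem \ref{theorem:stability}. (iv) Conclude $\limsup_i \E\|\bm{\delta}_i\|^2 = O(\mumax) \cdot O(\mumax^2) / O(\mumax)$... more carefully, iterating gives $\limsup_i \E\|\bm{\delta}_i\|^2 \le \frac{1}{O(\mumax)}\cdot c^2\mumax^2 \cdot O(\mumax^2) = O(\mumax^3)$, which is in particular $O(\mumax^2)$ as claimed. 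One should also handle the transient: either assume zero initialization so $\bm{\delta}_{-1}=0$, or note the contraction kills any bounded initial condition geometrically.

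The main obstacle I anticipate is step (ii)–(iii): making the contraction property of $\Bcal$ precise and uniform in the right norm, and correctly bookkeeping the powers of $\mumax$ when the cross term from $\|x+y\|^2$ is split — one must verify that choosing the splitting parameter $t \propto \mumax$ leaves a net contraction factor $1 - O(\mumax)$ while inflating the perturbation term only by a harmless $O(1/\mumax)$. A secondary technical point is that $\wt{\swb}_{i-1}$ appearing in the perturbation term is the \emph{true} error, not the long-term one, so one needs $\E\|\wt{\swb}_{i-1}\|^4 = O(\mumax^2)$ from Theorem \ref{theorem:stability} rather than any property of the long-term model; this keeps the argument from being circular. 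Everything else — the mean-value-theorem representation of $\bm{\Hcal}_{i-1}$, the Lipschitz bound, the block-diagonal decoupling into groups — is routine given the assumptions and earlier results.
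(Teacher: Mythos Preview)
Your approach is correct and is essentially the \emph{content} of the result the paper cites. The paper's own proof is a two-line appeal to Theorem~10.2 of \cite[p.~557]{Sayed14NOW}, applied separately to each group error recursion \eqref{eqn:networkerrorrecursiongroup} and its long-term counterpart \eqref{eqn:longtermerrorrecursiongroup}; the block-diagonal structure then gives \eqref{eqn:longtermmodelgap} for the full network. What you have sketched --- subtract the recursions, cancel the common noise, bound the residual via the Hessian Lipschitz condition, and iterate against the contraction of $\Bcal$ in a suitable weighted norm using the fourth-order bound from Theorem~\ref{theorem:stability} --- is precisely the argument that lives inside that cited theorem. So your route is more self-contained but not genuinely different.

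One caveat on your bookkeeping: in step~(iii)--(iv) the powers of $\mumax$ drift. After the splitting $\|x+y\|^2 \le (1+t)\|x\|^2 + (1+t^{-1})\|y\|^2$ with $t \asymp \mumax$, the perturbation picks up a factor $(1+t^{-1}) = O(\mumax^{-1})$, so the driving term is $O(\mumax^{-1})\cdot O(\mumax^2)\cdot \E\|\wt{\swb}_{i-1}\|^4 = O(\mumax^3)$, and iterating against $1-O(\mumax)$ yields $\limsup_i \E\|\bm{\delta}_i\|^2 = O(\mumax^2)$, not $O(\mumax^3)$. Your step~(iv) drops the $(1+t^{-1})$ factor and overclaims by one power; the stated lemma bound $O(\mumax^2)$ is exactly what falls out once this is corrected. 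The contraction-in-weighted-norm point you flag is indeed the only place real care is needed, and it is handled in \cite{Sayed14NOW} via the Jordan decomposition of each $A_m$ (cf.\ the argument in Appendix~\ref{app:lowrankapprox} for the analogous Lemma~\ref{lemma:lowrankapprox}).
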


\begin{proof}
We call upon Theorem 10.2 from \cite[p. 557]{Sayed14NOW} to conclude that the difference between each group error recursion \eqref{eqn:networkerrorrecursiongroup} and its long-term model \eqref{eqn:longtermerrorrecursiongroup} satisfies:
\be
\label{eqn:longtermmodelgroupgap}
\limsup_{i\rightarrow \infty} \E \| \wt{\swb}_{m,i} - \wt{\swb}_{m,i}^\longterm \|^2 = O(\mumax^2)
\ee
for all $m$. It is then immediate to conclude that \eqref{eqn:longtermmodelgap} holds.
\end{proof}

\subsection{Low-Dimensional Model}
\label{subsection:blockstructure}
Lemma \ref{lemma:approxerrorrecursion} indicates that we can assess the MSE dynamics of the original network recursion \eqref{eqn:networkerrorrecursiondef} to first-order in $\mumax$ by working with the long-term model \eqref{eqn:longtermerrorrecursion}. It turns out that the state variable of the long-term model can be split into two parts, one consisting of the \emph{centroids} of each group and the other consisting of in-group discrepancies. The details of this splitting are not important for our current discussion but interested readers can refer to Sec. V of \cite{Chen13TIT} and Eq. (10.37) of \cite[p. 558]{Sayed14NOW} for a detailed explanation. Here we only use this fact to motivate the introduction of the low-dimensional model. Moreover, it also turns out that the first part, i.e, the part corresponding to the centroids, is the dominant component in the evolution of the error dynamics and that the evolution of the two parts (centroids and in-group discrepancies) is weakly-coupled. By retaining the first part, we can therefore arrive at a low-dimensional model that will allow us to assess performance in closed-form to first-order in $\mumax$. To arrive at the low-dimensional model, we need to exploit the eigen-structure of the combination matrix $A$, or, equivalently, that of each $A_m$.

Recall that we indicated earlier prior to the statement of Theorem \ref{theorem:stability} that each $A_m$ is a primitive and left-stochastic matrix. By the Perron-Frobenius theorem \cite{BermanPF, Pillai05SPM, Sayed14NOW}, it follows that each $A_m$ has a simple eigenvalue at one with all other eigenvalues lying strictly inside the unit circle. Moreover, if we let $p_m^g \in \mbbR^{N_m^g \times 1}$ denote the right-eigenvector of $A_m$ that is associated with the eigenvalue at one, and normalize its entries to add up to one, then the same theorem ensures that all entries of $p_m^g$ will be positive:
\be
\label{eqn:pqdef}
p_m^g \defeq \col\{ p_{m,k}^g \}_{k=1}^{N_m^g} \succ 0, \;
A_m p_m^g = p_m^g, \; 
\one_{N_m^g}^\T p_m^g = 1
\ee
where $p_{m,k}^g$ denotes the $k$-th entry of $p_m^g$. This means that we can express each $A_m$ in the form (see \eqref{eqn:Aeigendecompdef} further ahead):
\be
\label{eqn:Amrankoneexp}
A_m = p_m^g \one_{N_m^g}^\T + V_{m,R} J_{m,\epsilon} V_{m,L}^\T
\ee
for some eigenvector matrices $V_{m,R}$ and $V_{m,L}$, and where $J_{m,\epsilon}$ denotes the collection of the Jordan blocks with eigenvalues inside the unit circle and with their unit entries on the first lower sub-diagonal replaced by some arbitrarily small constant $0<\epsilon\ll1$. The first rank-one component on the RHS of \eqref{eqn:Amrankoneexp} represents the contribution by the largest eigenvalue of $A_m$, and this component will be used further ahead to describe the centroid of group $\Gcal_m$. The network Perron eigenvector is obtained by stacking the group Perron eigenvectors $\{ p_m^g \}$:
\be
\label{eqn:pdef}
p \defeq \col\{ p_1^g, \dots, p_G^g \} \defeq \col\{ p_1, \dots, p_N \}
\ee
where $p_k$ denotes the $k$-th entry of $p \in \mbbR^{N \times 1}$. According to the indexing rule from Definition \ref{def:index}, it is obvious that $p_m^g = \col\{ p_k; k \in \Gcal_m \}$.

Now, for each group $\Gcal_m$, we introduce the low-dimensional (centroid) error recursion defined by (compare with \eqref{eqn:longtermerrorrecursiongroup}):
\be
\label{eqn:lowdimensionerrorrecursiongroup}
\wt{\bm{w}}_{m,i}^\lowdim = D_m \wt{\bm{w}}_{m,i-1}^\lowdim + (p_m^g \kron I_M)^\T \Mcal_m \ssb_{m,i}(\swb_{m,i-1})
\ee
where $\wt{\bm{w}}_{m,i}^\lowdim$ is $M\times1$, and $D_m$ is $M \times M$ and defined by
\be
\label{eqn:Dqdef}
D_m \defeq I_M - \mumax \bar{H}_m
\ee
where
\begin{align}
\label{eqn:barHmdef}
\bar{H}_m & \defeq \mumax^{-1} (p_m^g \kron I_M)^\T \Mcal_m \Hcal_m (\one_{N_m^g} \kron I_M) \nn \\
& = \sum_{k\in\Gcal_m} \frac{ p_k \mu_k}{\mumax} H_k = O(\mumax^0)
\end{align}
The matrix $\bar{H}_m$ is positive definite since there is at least one Hessian matrix in $\{ H_k; k \in \Gcal_m \}$ that is positive definite according to Assumption \ref{ass:costfunctions}. We collect the low-rank recursions \eqref{eqn:lowdimensionerrorrecursiongroup} for groups into one recursion for the entire network by stacking them on top of each other:
\be
\label{eqn:lowdimensionerrorrecursion}
\wt{\swb}_i^\lowdim = \Dcal \wt{\swb}_{i-1}^\lowdim  + \Pcal^\T \Mcal \ssb_i( \swb_{i-1} )
\ee
where
\begin{align}
\label{eqn:werrorlowdimdef}
\wt{\swb}_i^\lowdim & \defeq \col\{ \wt{\bm{w}}_{1,i}^\lowdim, \dots, \wt{\bm{w}}_{G,i}^\lowdim \} \in \mbbR^{GM \times 1} \\
\label{eqn:bigDdef}
\Dcal & \defeq \diag\{ D_1, \dots, D_G \} \in \mbbR^{GM \times GM} \\
\label{eqn:bigPdef}
\Pcal & \defeq \diag\{ p_1^g, \dots, p_G^g \} \kron I_M \in \mbbR^{NM \times GM}
\end{align}
Recursion \eqref{eqn:lowdimensionerrorrecursion} describes the joint dynamics of all the centroids (one for each group). Note that the dimension of $\wt{\swb}_i^\lowdim$ in \eqref{eqn:lowdimensionerrorrecursion} is $GM$, which is lower than the dimension, $NM$, of $\wt{\swb}_i^\longterm$ in \eqref{eqn:longtermerrorrecursion} or $\wt{\swb}_i$ in \eqref{eqn:networkerrorrecursiondef}, because $G \le N$ by Assumption \ref{ass:topology}. In order to measure the difference between the dynamics of the long-term model \eqref{eqn:longtermerrorrecursion} and the low-dimensional model \eqref{eqn:lowdimensionerrorrecursion}, we expand $\wt{\swb}_i^\lowdim$ in the following manner (compare with \eqref{eqn:werrorlowdimdef}):
\begin{align}
\label{eqn:zidef}
\bar{\swb}_i^\lowdim & \defeq \col\{ \bar{\swb}_{1,i}^\lowdim, \dots, \bar{\swb}_{G,i}^\lowdim \} \in \mbbR^{NM \times 1} \\
\label{eqn:zqidef}
\bar{\swb}_{m,i}^\lowdim & \defeq \one_{N_m^g} \kron \wt{\bm{w}}_{m,i}^\lowdim \in \mbbR^{N_m^g M \times 1}
\end{align}
because $\sum_{m=1}^G N_m^g = N$ according to Assumption \ref{ass:topology}.

\begin{lemma}[Accuracy of low-dimensional model]
\label{lemma:lowrankapprox}
For sufficiently small step-sizes, the low-dimensional model \eqref{eqn:lowdimensionerrorrecursion} is close to the network long-term model \eqref{eqn:longtermerrorrecursion} in the following sense:
\be
\label{eqn:lowdimensionmodelgap}
\limsup_{i\rightarrow \infty} \E \| \wt{\swb}_i^\longterm - \bar{\swb}_i^\lowdim \|^2 = O(\mumax^2)
\ee
where $\bar{\swb}_i^\lowdim$ is given by \eqref{eqn:zidef} and is related to $\wt{\swb}_i^\lowdim$ via \eqref{eqn:zqidef}.
\end{lemma}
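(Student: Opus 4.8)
The plan is to reduce the statement to the per-group level and then pass, within each group, to a coordinate system that exposes the centroid mode explicitly. By \eqref{eqn:wiandwqi}, \eqref{eqn:zidef}, and \eqref{eqn:werrorlowdimdef} we have $\wt{\swb}_i^\longterm - \bar{\swb}_i^\lowdim = \col\{ \wt{\swb}_{m,i}^\longterm - \bar{\swb}_{m,i}^\lowdim \}_{m=1}^{G}$, so it suffices to prove $\limsup_{i\to\infty} \E\| \wt{\swb}_{m,i}^\longterm - \bar{\swb}_{m,i}^\lowdim \|^2 = O(\mumax^2)$ for each $m$ and then add the $G$ (finitely many) bounds. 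Fix a group $\Gcal_m$. Using the eigen-structure of $A_m$ recorded in \eqref{eqn:pqdef}--\eqref{eqn:Amrankoneexp}, introduce an invertible transformation $\Tcal_m \in \mbbR^{N_m^g M \times N_m^g M}$ whose first $M$ rows equal $(p_m^g)^\T \kron I_M$, whose inverse has first $M$ columns equal to $\one_{N_m^g} \kron I_M$, and for which $\Tcal_m \Acal_m^\T \Tcal_m^{-1} = \diag\{ I_M,\; \mathcal{J}_{m,\epsilon} \kron I_M \}$ with $\|\mathcal{J}_{m,\epsilon}\| \le \rho_m + \epsilon$, where $\rho_m < 1$ is the second-largest eigenvalue modulus of $A_m$ and $\epsilon$ is chosen small enough that $\rho_m + \epsilon < 1$. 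Writing $\col\{ \hat{\bm x}_{m,i},\; \check{\bm x}_{m,i} \} \defeq \Tcal_m \wt{\swb}_{m,i}^\longterm$ with $\hat{\bm x}_{m,i} \in \mbbR^{M}$, the long-term group recursion \eqref{eqn:longtermerrorrecursiongroup} turns into a $2\times 2$ block system whose $(1,1)$ block is \emph{exactly} $D_m = I_M - \mumax \bar{H}_m$ (because $(p_m^g \kron I_M)^\T \Mcal_m \Hcal_m (\one_{N_m^g}\kron I_M) = \mumax \bar{H}_m$ by \eqref{eqn:barHmdef}), whose $(2,2)$ block has spectral norm at most $\rho_m + \epsilon + O(\mumax) < 1$ for small $\mumax$, whose off-diagonal blocks are $O(\mumax)$, and whose forcing term in the top block is precisely $(p_m^g \kron I_M)^\T \Mcal_m \ssb_{m,i}(\swb_{m,i-1})$, i.e., the driving term of the low-dimensional recursion \eqref{eqn:lowdimensionerrorrecursiongroup}.

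The second step consists of two mean-square estimates. First, the discrepancy coordinate $\check{\bm x}_{m,i}$ obeys a recursion whose coefficient has spectral norm bounded away from one uniformly in $\mumax$, forced by (i) a transformed gradient-noise term of conditional second moment $O(\mumax^2)$ --- this uses \eqref{eqn:boundedvariance}, $\|\Mcal_m\| = O(\mumax)$, and $\limsup_i \E\| \wt{\swb}_{i-1} \|^2 = O(\mumax)$ from Theorem \ref{theorem:stability} --- and (ii) an $O(\mumax)$ coupling from $\hat{\bm x}_{m,i-1}$, whose second moment is $O(\mumax)$ (by Theorem \ref{theorem:stability} and Lemma \ref{lemma:approxerrorrecursion}); a standard mean-square argument, using the martingale-difference property of Assumption \ref{ass:gradienterrors} to kill the state--noise cross terms, then gives $\limsup_{i\to\infty} \E\| \check{\bm x}_{m,i} \|^2 = O(\mumax^2)$. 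Second, setting $\bm{\delta}_{m,i} \defeq \hat{\bm x}_{m,i} - \wt{\bm w}_{m,i}^\lowdim$ and subtracting \eqref{eqn:lowdimensionerrorrecursiongroup} from the top block of the transformed recursion, the matching $(1,1)$ blocks and matching top-block noises cancel, leaving $\bm{\delta}_{m,i} = D_m \bm{\delta}_{m,i-1} + O(\mumax)\,\check{\bm x}_{m,i-1}$, a recursion that contracts at rate $1 - O(\mumax)$ (since $\bar{H}_m > 0$) and is forced by a term of second moment $O(\mumax^2)\cdot O(\mumax^2) = O(\mumax^4)$; a geometric-series bound yields $\limsup_{i\to\infty} \E\| \bm{\delta}_{m,i} \|^2 = O(\mumax^4)/O(\mumax^2) = O(\mumax^2)$. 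Finally, undoing the transformation and using that the first $M$ columns of $\Tcal_m^{-1}$ are $\one_{N_m^g}\kron I_M$, we obtain $\wt{\swb}_{m,i}^\longterm - \bar{\swb}_{m,i}^\lowdim = (\one_{N_m^g}\kron I_M)\bm{\delta}_{m,i} + (\text{remaining columns of }\Tcal_m^{-1})\,\check{\bm x}_{m,i}$, whose second moment is $O(\mumax^2) + O(\mumax^2) = O(\mumax^2)$; summing over $m$ gives \eqref{eqn:lowdimensionmodelgap}.

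The delicate point is the interplay of time scales: the recursion for $\bm{\delta}_{m,i}$ contracts only at the slow rate $1 - O(\mumax)$, so an $O(\mumax^4)$ forcing gets amplified by a factor of order $\mumax^{-2}$; it is only because the transformed $(1,1)$ block is \emph{identically} $D_m$ (so $\bm{\delta}_{m,i}$ inherits no $O(\mumax)$ self-forcing from $\hat{\bm x}_{m,i}$) and because the coupling into the discrepancy mode costs an extra factor of $\mumax$ that this amplification lands at exactly $O(\mumax^2)$ rather than something larger. Some care is likewise needed with the cross-correlations between the transformed gradient-noise terms and the states when closing the mean-square recursions, but these are handled in the usual way via the martingale-difference property of Assumption \ref{ass:gradienterrors}. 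We note finally that the group recursions \eqref{eqn:longtermerrorrecursiongroup} and \eqref{eqn:lowdimensionerrorrecursiongroup} are exactly of the form analyzed in \cite{Sayed14NOW} (see also Sec.~V of \cite{Chen13TIT}), so one may alternatively invoke the corresponding low-rank-approximation result there and then stack the $G$ group estimates to conclude.
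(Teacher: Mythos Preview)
Your proposal is correct and follows essentially the same route as the paper: reduce to a single group, pass to Jordan-type coordinates for $A_m$ so that the centroid direction decouples (the paper uses $\Vcal_m^\T = V_m^\T \kron I_M$ and works in the weighted norm $\|\cdot\|_{\Vcal_m\Vcal_m^\T}$, whereas you undo the transformation explicitly), cite or rederive the $O(\mumax^2)$ bound on the discrepancy mode, and then observe that the centroid difference $\hat{\bm x}_{m,i}-\wt{\bm w}_{m,i}^\lowdim$ obeys the noise-free recursion $D_m(\cdot)+O(\mumax)\,\check{\bm x}_{m,i-1}$ because the $(1,1)$ block and top-block noise match exactly. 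The only cosmetic quibble is your ``$O(\mumax^4)/O(\mumax^2)$'' bookkeeping for the $\bm{\delta}_{m,i}$ recursion: the standard Young-inequality splitting actually gives $(1-c\mumax)\E\|\bm{\delta}_{m,i-1}\|^2 + O(\mumax^{-1})\cdot O(\mumax^4)$, i.e., $O(\mumax^3)/O(\mumax)=O(\mumax^2)$ (which is exactly how the paper writes it in \eqref{eqn:lowrankblock1recursion2}--\eqref{eqn:lowrankblock1recursion3}), but the conclusion is the same.
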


\begin{IEEEproof}
See Appendix \ref{app:lowrankapprox}.
\end{IEEEproof}

\begin{lemma}[Low-dimensional error covariance]
\label{lemma:lowrankerrorcov}
For sufficiently small step-sizes, the covariance matrix for $\wt{\swb}_i^\lowdim$ satisfies
\be
\label{eqn:ThetaiandTheta}
\limsup_{i\rightarrow \infty} \| \E [\wt{\swb}_i^\lowdim  (\wt{\swb}_i^\lowdim)^\T ] - \Theta \| = O(\mumax^{1+\gamma_s/2})
\ee
where $\Theta \in \mbbR^{GM \times GM}$ is symmetric, positive-definite, and uniquely solves the discrete Lyapunov equation:
\be
\label{eqn:ThetaDTLEdef}
\Theta = \Dcal \Theta \Dcal + \Pcal^\T \Mcal \Rcal_s \Mcal \Pcal
\ee
\end{lemma}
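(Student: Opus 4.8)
The plan is to derive a deterministic Lyapunov recursion for the covariance $\bm{\Xi}_i \defeq \E[\wt{\swb}_i^\lowdim(\wt{\swb}_i^\lowdim)^\T]$, match it term-by-term against \eqref{eqn:ThetaDTLEdef}, and then exploit the fact that the contraction gap of $\Dcal$ is of order $\mumax$ to close the error estimate.

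First I would form the outer product $\wt{\swb}_i^\lowdim(\wt{\swb}_i^\lowdim)^\T$ from \eqref{eqn:lowdimensionerrorrecursion} and take expectations. The two cross terms vanish: $\wt{\swb}_{i-1}^\lowdim$ is generated from $\{\ssb_j(\swb_{j-1})\}_{j\le i-1}$ through the linear recursion and is therefore $\F_{i-1}$-measurable, while $\E[\ssb_i(\swb_{i-1})\mid\F_{i-1}]=0$ by the martingale-difference property \eqref{eqn:martingaledifference}, so the tower rule eliminates the cross terms. For the noise term, $\E[\ssb_i(\swb_{i-1})\ssb_i^\T(\swb_{i-1})]=\E[\Rcal_{s,i}(\swb_{i-1})]$ by \eqref{eqn:bigRsidef}. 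This gives the exact recursion $\bm{\Xi}_i = \Dcal\bm{\Xi}_{i-1}\Dcal + \Pcal^\T\Mcal\,\E[\Rcal_{s,i}(\swb_{i-1})]\,\Mcal\Pcal$, where $\Dcal$ is symmetric because each $\bar{H}_m$ in \eqref{eqn:barHmdef} is a convex combination of the symmetric Hessians $H_k$.

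Next I would dispose of the existence, uniqueness and definiteness of $\Theta$. Each $\bar{H}_m$ is positive definite with spectrum contained in a fixed interval $[\nu_1,\nu_2]\subset(0,\infty)$ that is independent of $\mumax$ — the lower bound from the strongly-convex cost present in each group and the upper bound from $\mu_k\le\mumax$ together with $H_k\le\lambda_{k,U}I_M$. Hence the symmetric matrix $\Dcal = I_{GM}-\mumax\diag\{\bar{H}_1,\dots,\bar{H}_G\}$ obeys $\|\Dcal\|=\rho(\Dcal)\le 1-\mumax\nu_1<1$ for $\mumax$ small enough, so \eqref{eqn:ThetaDTLEdef} has the unique solution $\Theta=\sum_{j\ge0}\Dcal^j(\Pcal^\T\Mcal\Rcal_s\Mcal\Pcal)\Dcal^j$, which is symmetric since every summand is. Positive-definiteness follows from $\Rcal_s>0$ in Assumption \ref{ass:gradienterrors} and the fact that $\Pcal$ has full column rank (each block column $p_m^g\kron I_M$ has rank $M$ because $p_m^g\succ0$, and distinct blocks live in orthogonal subspaces), so $\Pcal^\T\Mcal\Rcal_s\Mcal\Pcal>0$ and therefore $\Theta\ge\Pcal^\T\Mcal\Rcal_s\Mcal\Pcal>0$.

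Then I would subtract \eqref{eqn:ThetaDTLEdef} from the covariance recursion to obtain $\bm{\Xi}_i-\Theta = \Dcal(\bm{\Xi}_{i-1}-\Theta)\Dcal + \Pcal^\T\Mcal(\E[\Rcal_{s,i}(\swb_{i-1})]-\Rcal_s)\Mcal\Pcal$ and bound the driving term: by the triangle inequality $\|\E[\Rcal_{s,i}(\swb_{i-1})]-\Rcal_s\| \le \E\|\Rcal_{s,i}(\swb_{i-1})-\Rcal_{s,i}(\sw^o)\| + \|\Rcal_{s,i}(\sw^o)-\Rcal_s\|$, where the first piece is at most $\kappa_s\,\E\|\wt{\swb}_{i-1}\|^{\gamma_s}$ by \eqref{eqn:lipschitzcovariance} — with $\limsup$ equal to $O(\mumax^{\gamma_s/2})$ after applying Jensen's inequality to the moment bounds of Theorem \ref{theorem:stability} (the second moment \eqref{eqn:varianceasymptoticbound} when $\gamma_s\le2$, the fourth moment \eqref{eqn:4thorderasymptoticbound} when $2<\gamma_s\le4$) — and the second piece tends to $0$ by \eqref{eqn:convergentcovariance}. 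Since $\|\Mcal\|=\mumax$ and $\|\Pcal\|=O(1)$, the driving term is $O(\mumax^{2+\gamma_s/2})$ in the $\limsup$. Taking norms, $\|\bm{\Xi}_i-\Theta\|\le\|\Dcal\|^2\|\bm{\Xi}_{i-1}-\Theta\|+O(\mumax^{2+\gamma_s/2})$; using that $\bm{\Xi}_i$ stays bounded and passing to $\limsup$ yields $\limsup_i\|\bm{\Xi}_i-\Theta\|\le O(\mumax^{2+\gamma_s/2})/(1-\|\Dcal\|^2)$, and since $1-\|\Dcal\|^2\ge\mumax\nu_1$ for small $\mumax$ this is $O(\mumax^{1+\gamma_s/2})$, which is the claim. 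The main obstacle is precisely this final step: one recovers the exponent $1+\gamma_s/2$ rather than $2+\gamma_s/2$ only because the contraction factor of the low-dimensional recursion has degraded to $1-O(\mumax)$, so the accumulated perturbation is amplified by $1/(1-\|\Dcal\|^2)=O(1/\mumax)$; making this rigorous requires the operator-norm bound $\|\Dcal\|\le1-\mumax\nu_1$ (not merely a spectral-radius bound), which is exactly where the symmetry of $\bar{H}_m$ and the $\mumax$-uniform two-sided spectral bounds on $\bar{H}_m$ are used, with everything else being routine bookkeeping with Jensen's inequality and the moment estimates already available.
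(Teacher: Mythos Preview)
Your proposal is correct and follows essentially the same route as the paper's proof. The only organizational difference is that the paper inserts an intermediate sequence $\Theta_i^{\ss}$ driven by $\Rcal_{s,i}(\sw^o)$ and splits the error as $(\Theta_i-\Theta_i^{\ss})+(\Theta_i^{\ss}-\Theta)$, whereas you subtract $\Theta$ directly and absorb the residual $\|\Rcal_{s,i}(\sw^o)-\Rcal_s\|\to 0$ into the driving term; both arguments rely on the same contraction bound $\|\Dcal\|\le 1-O(\mumax)$, the Lipschitz condition \eqref{eqn:lipschitzcovariance}, Jensen's inequality, and the moment bounds of Theorem \ref{theorem:stability} to reach the $O(\mumax^{1+\gamma_s/2})$ conclusion.
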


\begin{proof}
See Appendix \ref{app:lowrankerrorcov}.
\end{proof}

\subsection{Steady-State MSE Performance}
From Theorem \ref{theorem:stability}, we know that the limit superior of the MSE is bounded within $O(\mumax)$. In order to define meaningful steady-state performance metrics, we consider the case in which the step-sizes approach zero asymptotically. Results obtained in this case are representative of operation in the slow adaptation regime (see Sec. 11.2 of \cite[pp. 581--583]{Sayed14NOW}).

\begin{lemma}[Steady-state normalized MSD]
\label{lemma:msdblock}
The normalized total MSD of $\wt{\swb}_i$ in \eqref{eqn:networkerrorrecursiondef} is given by 
\be
\label{eqn:steadystateMSDdefblock}
\lim_{\mumax \rightarrow 0} \limsup_{i \rightarrow \infty} \mumax^{-1} \E \| \wt{\swb}_i \|^2 =  \sum_{m = 1}^G \frac{N_m^g}{2 \mumax} \Tr \left[ \left( \sum_{k \in \Gcal_m} p_k \mu_k H_k \right)^{-1} \left( \sum_{k \in \Gcal_m} p_k^2 \mu_k^2 R_k \right) \right] 
\ee
where $H_k$ is from \eqref{eqn:Hkdef} and $R_k$ is the $m$-th block on the diagonal of $\Rcal_s$ from \eqref{eqn:convergentcovariance} with block size $M \times M$.
\end{lemma}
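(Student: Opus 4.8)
The plan is to push $\E\|\wt{\swb}_i\|^2$ through the approximation chain of Lemmas \ref{lemma:approxerrorrecursion}, \ref{lemma:lowrankapprox}, and \ref{lemma:lowrankerrorcov} down to a trace of the low-dimensional covariance $\Theta$, and then to evaluate the Lyapunov equation \eqref{eqn:ThetaDTLEdef} in closed form to first order in $\mumax$. First I would write $\wt{\swb}_i - \bar{\swb}_i^\lowdim = (\wt{\swb}_i - \wt{\swb}_i^\longterm) + (\wt{\swb}_i^\longterm - \bar{\swb}_i^\lowdim)$ and combine Minkowski's inequality with \eqref{eqn:longtermmodelgap}--\eqref{eqn:lowdimensionmodelgap} to obtain $\limsup_i\big(\E\|\wt{\swb}_i - \bar{\swb}_i^\lowdim\|^2\big)^{1/2} = O(\mumax)$. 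Lemma \ref{lemma:lowrankerrorcov} also gives $\limsup_i\E\|\bar{\swb}_i^\lowdim\|^2 \le N\limsup_i\E\|\wt{\swb}_i^\lowdim\|^2 = N\Tr(\Theta) + o(\mumax) = O(\mumax)$, so expanding $\|\wt{\swb}_i\|^2$ about $\bar{\swb}_i^\lowdim$ and bounding the cross term by Cauchy--Schwarz yields $\limsup_i\E\|\wt{\swb}_i\|^2 = \limsup_i\E\|\bar{\swb}_i^\lowdim\|^2 + O(\mumax^{3/2})$. By \eqref{eqn:zidef}--\eqref{eqn:zqidef}, $\|\bar{\swb}_{m,i}^\lowdim\|^2 = N_m^g\|\wt{\bm{w}}_{m,i}^\lowdim\|^2$, hence $\E\|\bar{\swb}_i^\lowdim\|^2 = \sum_{m=1}^G N_m^g\,\Tr\big(\E[\wt{\bm{w}}_{m,i}^\lowdim(\wt{\bm{w}}_{m,i}^\lowdim)^\T]\big)$; restricting Lemma \ref{lemma:lowrankerrorcov} to the $(m,m)$ block (a principal submatrix, so the same order bound holds) then gives $\limsup_i\E\|\wt{\swb}_i\|^2 = \sum_{m=1}^G N_m^g\,\Tr(\Theta_m) + O(\mumax^{1+\gamma_s/2}) + O(\mumax^{3/2})$, where $\Theta_m$ is the $m$-th $M\times M$ diagonal block of $\Theta$.

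Next I would solve for $\Theta_m$. Since $\Dcal$ in \eqref{eqn:bigDdef} is block diagonal, the $(m,m)$ block of \eqref{eqn:ThetaDTLEdef} decouples into the small Lyapunov equation $\Theta_m = D_m\Theta_m D_m + Y_m$ with $D_m = I_M - \mumax\bar{H}_m$ from \eqref{eqn:Dqdef} and, using the structure of $\Pcal$, $\Mcal$ and the diagonal blocks $R_k$ of $\Rcal_s$, $Y_m = \sum_{k\in\Gcal_m} p_k^2\mu_k^2 R_k = O(\mumax^2)$. Expanding $D_m\Theta_m D_m$ and cancelling $\Theta_m$ gives $\mumax(\bar{H}_m\Theta_m + \Theta_m\bar{H}_m) = Y_m + \mumax^2\bar{H}_m\Theta_m\bar{H}_m$; since $\bar{H}_m = O(\mumax^0)$ is positive definite this first forces $\Theta_m = O(\mumax)$, whence the last term is $O(\mumax^3)$ and $\bar{H}_m\Theta_m + \Theta_m\bar{H}_m = \mumax^{-1}Y_m + O(\mumax^2)$. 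Using the representation $X = \int_0^\infty e^{-\bar{H}_m t}\, C\, e^{-\bar{H}_m t}\, dt$ for the solution of $\bar{H}_m X + X\bar{H}_m = C$ with symmetric positive-definite $\bar{H}_m$, one has $\Tr(X) = \frac12\Tr(\bar{H}_m^{-1} C)$; taking $C = \mumax^{-1}Y_m + O(\mumax^2)$ and inserting $\bar{H}_m^{-1} = \mumax\big(\sum_{k\in\Gcal_m} p_k\mu_k H_k\big)^{-1}$ from \eqref{eqn:barHmdef} produces
\[
\Tr(\Theta_m) = \frac12\,\Tr\!\left[\Big(\sum_{k\in\Gcal_m} p_k\mu_k H_k\Big)^{-1}\Big(\sum_{k\in\Gcal_m} p_k^2\mu_k^2 R_k\Big)\right] + O(\mumax^2).
\]

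Substituting this back, every accumulated correction is of order $\mumax^{1+\gamma_s/2}$, $\mumax^{3/2}$, or $\mumax^2$, and hence $o(\mumax)$ because $\gamma_s > 0$; multiplying by $\mumax^{-1}$ and letting $\mumax \to 0$ (with the step-size ratios held fixed, so the right-hand side is a well-defined finite constant) annihilates the $o(1)$ remainder and leaves exactly the stated identity. The main obstacle is precisely this error bookkeeping: one has to track every residual produced along Lemmas \ref{lemma:approxerrorrecursion}--\ref{lemma:lowrankerrorcov} and by truncating the Lyapunov solution, and verify that each is of strictly higher order than $\mumax$, so that it survives neither the normalization $\mumax^{-1}(\cdot)$ nor the limit $\mumax \to 0$; the assumption $\gamma_s > 0$ is exactly what makes this succeed. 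A secondary point needing care is confirming that within-group cross-covariances of the gradient noise drop out of $Y_m$, which relies on the (block) diagonal structure of $\Rcal_s$ so that $Y_m = \sum_{k\in\Gcal_m} p_k^2\mu_k^2 R_k$ holds exactly.
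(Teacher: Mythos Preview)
Your argument is correct and, in fact, considerably more self-contained than the paper's own proof. The paper does not rederive anything here: it simply observes that the network error decomposes into $G$ decoupled group error recursions and then quotes Lemma~11.3 of \cite[p.~594]{Sayed14NOW} to write down the per-group normalized MSD in \eqref{eqn:steadystateMSDdefgroup}, summing over $m$ to obtain \eqref{eqn:steadystateMSDdefblock}. You instead rebuild that cited result from scratch using the paper's internal machinery: the approximation chain of Lemmas~\ref{lemma:approxerrorrecursion}--\ref{lemma:lowrankerrorcov} to pass from $\wt{\swb}_i$ to $\wt{\swb}_i^\lowdim$, then a direct first-order solution of the discrete Lyapunov equation \eqref{eqn:ThetaDTLEdef} block by block, together with the trace identity $\Tr(X)=\tfrac12\Tr(\bar{H}_m^{-1}C)$ for the symmetric continuous Lyapunov equation $\bar{H}_m X + X\bar{H}_m = C$. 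This is exactly the computation that the paper carries out \emph{after} Lemma~\ref{lemma:msdblock} in deriving \eqref{eqn:ThetaDTLEdef2}--\eqref{eqn:continuoustimeLyapunovEqn}, so what you have effectively done is reorder the presentation and absorb the external citation.

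Your closing caveat is well placed: the formula $Y_m=\sum_{k\in\Gcal_m}p_k^2\mu_k^2 R_k$ does require that the within-group off-diagonal blocks of $\Rcal_s$ vanish (spatially uncorrelated gradient noise), which is a standing assumption in \cite{Sayed14NOW} but is not spelled out explicitly in the present paper's Assumption~\ref{ass:gradienterrors}. Without it, $Y_m$ would pick up cross terms $p_k p_\ell \mu_k \mu_\ell [\Rcal_s]_{k,\ell}$ for $k\neq\ell$ in $\Gcal_m$; this is a feature of the \emph{statement} rather than a flaw in your argument, and the paper's cited lemma carries the same implicit hypothesis.
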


\begin{proof}
The normalized total MSD is the sum of the normalized MSD for each group. From Lemma 11.3 of \cite[p. 594]{Sayed14NOW}, the normalized MSD for each group $\Gcal_m$ is given by
\be
\label{eqn:steadystateMSDdefgroup}
\lim_{\mumax \rightarrow 0} \limsup_{i \rightarrow \infty} \mumax^{-1} \E \| \wt{\swb}_{m,i} \|^2 = \frac{N_m^g}{2 \mumax} \Tr \left[ \left( \sum_{k \in \Gcal_m} p_k \mu_k H_k \right)^{-1} \left( \sum_{k \in \Gcal_m} p_k^2 \mu_k^2 R_k \right) \right]
\ee
Note that we calculate the \emph{normalized total} MSD rather than the \emph{average} MSD in \eqref{eqn:steadystateMSDdefblock} and \eqref{eqn:steadystateMSDdefgroup}.
\end{proof}

In order to examine the statistical properties of the error vector $\wt{\swb}_i$, we need to strengthen the result in Lemma \ref{lemma:msdblock} by evaluating the full normalized error covariance matrix of $\wt{\swb}_i$ in steady-state. From Lemmas \ref{lemma:approxerrorrecursion} and \ref{lemma:lowrankapprox}, it is clear that the mean-square dynamics of the original error recursion \eqref{eqn:networkerrorrecursiondef} can be well approximated by the low-dimensional model \eqref{eqn:lowdimensionerrorrecursion}. And it was shown in Eq. (10.78) of \cite[p. 563]{Sayed14NOW} that the variances of the centroids $\{\wt{\bm{w}}_{k,i}^\lowdim\}$ are in the order of $\mumax$ in steady-state, which implies that
\be
\label{eqn:wilowdimnormalizedvardef}
\lim_{\mumax \rightarrow 0} \limsup_{i\rightarrow\infty} \mumax^{-1} \E \| \wt{\swb}_i^\lowdim \|^2 = O(\mumax^0)
\ee
Since the induced-2 norm of the covariance matrix of any random vector is always bounded by its variance, i.e., $\| \E \bm{x} \bm{x}^\T \| \le \E \| \bm{x} \|^2$ by using Jensen's inequality, it follows from \eqref{eqn:wilowdimnormalizedvardef} that the normalized covariance matrix of $\wt{\swb}_i^\lowdim$ is finite in steady-state. Moreover, since Lemma \ref{lemma:lowrankerrorcov} applies to any positive value of $\mumax$ as long as it is small enough to ensure stability, we can take the limit of $\mumax$ in \eqref{eqn:ThetaiandTheta} by letting it approach zero asymptotically. That is,
\be
\label{eqn:PiinftyequalPhi}
\lim_{\mumax \rightarrow 0} \limsup_{i\rightarrow \infty} \| \mumax^{-1} \E [\wt{\swb}_i^\lowdim  (\wt{\swb}_i^\lowdim)^\T ] - \Phi \| = 0
\ee
where
\be
\label{eqn:PhiandTheta}
\Phi \defeq \lim_{\mumax \rightarrow 0} ( \mumax^{-1} \Phi_i )
\ee
Due to \eqref{eqn:wilowdimnormalizedvardef} and \eqref{eqn:PiinftyequalPhi}, $\Phi$ is in the order of $\mumax^0$, i.e., $\| \Phi \| = O(\mumax^0)$. In fact, by introducing $\Phi_i \defeq \mumax^{-1} \E [\wt{\swb}_i^\lowdim  (\wt{\swb}_i^\lowdim)^\T ]$ and using the triangle inequality, we have
\begin{align}
\label{eqn:Phiorder1}
\| \Phi \| & = \| \Phi - \Phi_i + \Phi_i \| \le \| \Phi - \Phi_i \| + \| \Phi_i \| \\
\label{eqn:Phiorder2}
\| \Phi_i \| & =  \| \Phi_i - \Phi + \Phi \| \le \| \Phi_i - \Phi \| + \| \Phi \|
\end{align}
Taking $i \rightarrow \infty$ and $\mumax \rightarrow 0$ for both \eqref{eqn:Phiorder1} and \eqref{eqn:Phiorder2} yields:
\begin{align}
\label{eqn:Phiorder1_2}
\| \Phi \| & \le \lim_{\mumax \rightarrow 0} \limsup_{i\rightarrow \infty} \| \Phi_i \| \\
\label{eqn:Phiorder2_2}
\| \Phi \| & \ge \lim_{\mumax \rightarrow 0} \limsup_{i\rightarrow \infty} \| \Phi_i \|
\end{align}
by using \eqref{eqn:PiinftyequalPhi}. From \eqref{eqn:Phiorder1_2} and \eqref{eqn:Phiorder2_2}, we get 
\be
\label{eqn:Phibound}
\| \Phi \| = \lim_{\mumax \rightarrow 0} \limsup_{i\rightarrow \infty} \| \Phi_i \|
\ee
Since $\Phi_i \in \mbbR^{GM \times GM}$ is positive semi-definite, it holds that
\be
\label{eqn:Phiibounds}
(GM)^{-1} \Tr(\Phi_i) \le \| \Phi_i \| \le \Tr(\Phi_i)
\ee
where we used the fact for any positive semi-definite matrix $X \ge 0$ that (i) all the eigenvalues of $X$ are nonnegative, (ii) $\| X \|$  is equal to the largest eigenvalue of $X$, and (iii) $\Tr(X)$ is equal to the sum of all the eigenvalues of $X$. Moreover,
\be
\label{eqn:Phiibounds2}
\Tr(\Phi_i) = \Tr( \mumax^{-1} \E [\wt{\swb}_i^\lowdim  (\wt{\swb}_i^\lowdim)^\T ] ) \!= \mumax^{-1} \E \| \wt{\swb}_i^\lowdim \|^2 \!\!
\ee
Using \eqref{eqn:wilowdimnormalizedvardef}, it follows from \eqref{eqn:Phiibounds} and \eqref{eqn:Phiibounds2} that
\be
\label{eqn:Phiibounds3}
\lim_{\mumax \rightarrow 0} \limsup_{i\rightarrow \infty} \| \Phi_i \| = O(\mumax^0)
\ee
Substituting \eqref{eqn:Phiibounds3} into \eqref{eqn:Phibound} yields the desired result, namely, $\| \Phi \| = O(\mumax^0)$. Then, according to \eqref{eqn:PhiandTheta}, $\Phi$ is the unique solution to equation \eqref{eqn:ThetaDTLEdef} when $\mumax \rightarrow 0$ asymptotically. Introduce two $GM \times GM$ matrices:
\begin{align}
\label{eqn:barbigHdef}
\bar{\Hcal} & \defeq \diag\{ \bar{H}_1, \dots, \bar{H}_G \} = O(\mumax^0) \\
\label{eqn:Rsdef}
\bar{\Rcal} & \defeq \mumax^{-2} \Pcal^\T \Mcal \Rcal_s \Mcal \Pcal = O(\mumax^0)
\end{align}
where $\bar{H}_m$ is from \eqref{eqn:barHmdef} and $\Rcal_s$ is from \eqref{eqn:convergentcovariance}. It is easy to verify that $\bar{\Hcal}$ and $\bar{\Rcal}$ are symmetric and positive-definite according to Assumptions \ref{ass:costfunctions} and \ref{ass:gradienterrors}. From \eqref{eqn:bigDdef}, \eqref{eqn:barbigHdef}, and \eqref{eqn:Dqdef}, we get
\be
\label{eqn:bigDandbigbarH}
\Dcal = I_{GM} - \mumax \bar{\Hcal}
\ee
Using \eqref{eqn:PhiandTheta}--\eqref{eqn:bigDandbigbarH}, equation \eqref{eqn:ThetaDTLEdef} reduces to
\be
\label{eqn:ThetaDTLEdef2}
\bar{\Hcal} \Phi + \Phi \bar{\Hcal} = \bar{\Rcal} + \mumax \bar{\Hcal} \Phi \bar{\Hcal}
\ee
Since $\bar{\Hcal}$ and $\bar{\Rcal}$ are constant matrices, and $\Phi$ is finite, the last term on the RHS of \eqref{eqn:ThetaDTLEdef2} disappears as $\mumax \rightarrow 0$ asymptotically. Therefore, we conclude that $\Phi$ is the unique solution to the continuous Lyapunov equation:
\be
\label{eqn:continuoustimeLyapunovEqn}
\bar{\Hcal} \Phi + \Phi \bar{\Hcal} = \bar{\Rcal}
\ee
Let us define the \emph{normalized} network error covariance matrix for $\wt{\swb}_i$ from \eqref{eqn:networkerrorrecursiondef} by
\be
\label{eqn:bigPidef}
\Pi_i \defeq \mumax^{-1} \E ( \wt{\swb}_i \wt{\swb}_i^\T )
\ee

\begin{theorem}[Block structure]
\label{theorem:blockstructure}
In steady-state, and as the step-sizes approach zero asymptotically, the normalized network error covariance matrix $\Pi_i$ in \eqref{eqn:bigPidef} satisfies
\be
\label{eqn:Piinftyoblockstructure}
\lim_{\mumax \rightarrow 0} \limsup_{i \rightarrow \infty} \| \Pi_i - \Pi \| = 0
\ee
where
\be
\label{eqn:Pinetworkdef}
\Pi \defeq \begin{bmatrix}
( \one_{N_1^g} \one_{N_1^g}^\T ) \kron \Phi_{1,1} \!&\! \dots \!&\! ( \one_{N_1^g} \one_{N_G^g}^\T ) \kron \Phi_{1,G} \\
\vdots \!&\! \ddots \!&\! \vdots \\
( \one_{N_G^g} \one_{N_1^g}^\T ) \kron \Phi_{G,1} \!&\! \dots \!&\! ( \one_{N_G^g} \one_{N_G^g}^\T ) \kron \Phi_{G,G} \\
\end{bmatrix}
\ee
and $\Phi_{m, r}$ denotes the $(m, r)$-th block of $\Phi$ from \eqref{eqn:continuoustimeLyapunovEqn} with block size $M \times M$.
\end{theorem}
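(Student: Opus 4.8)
The plan is to transfer the steady-state covariance of $\wt{\swb}_i$ onto the replicated centroid vector $\bar{\swb}_i^\lowdim$ of \eqref{eqn:zidef}--\eqref{eqn:zqidef}, and then to read off the asserted block form from the explicit structure of $\bar{\swb}_i^\lowdim$ together with the already-derived Lyapunov characterization of $\Phi$ in \eqref{eqn:continuoustimeLyapunovEqn}. First I would combine Lemmas \ref{lemma:approxerrorrecursion} and \ref{lemma:lowrankapprox} through $\|a-c\|^2\le 2\|a-b\|^2+2\|b-c\|^2$ to obtain $\limsup_{i\to\infty}\E\|\wt{\swb}_i - \bar{\swb}_i^\lowdim\|^2 = O(\mumax^2)$, and pair this with $\limsup_i\E\|\wt{\swb}_i\|^2 = O(\mumax)$ from Theorem \ref{theorem:stability} and with $\limsup_i\E\|\bar{\swb}_i^\lowdim\|^2 \le (\max_m N_m^g)\limsup_i\E\|\wt{\swb}_i^\lowdim\|^2 = O(\mumax)$, where the middle inequality holds because $\bar{\swb}_i^\lowdim$ merely repeats each $M\times1$ centroid $N_m^g$ times and $\sum_m N_m^g = N$ is finite, so that \eqref{eqn:wilowdimnormalizedvardef} applies. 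Writing $\bm{x}\bm{x}^\T - \bm{y}\bm{y}^\T = \bm{x}(\bm{x}-\bm{y})^\T + (\bm{x}-\bm{y})\bm{y}^\T$, taking norms, and using Jensen's inequality and the Cauchy--Schwarz inequality gives
\[
\|\E\bm{x}\bm{x}^\T - \E\bm{y}\bm{y}^\T\| \le \sqrt{\E\|\bm{x}-\bm{y}\|^2}\,\Big(\sqrt{\E\|\bm{x}\|^2}+\sqrt{\E\|\bm{y}\|^2}\Big),
\]
which, applied with $\bm{x}=\wt{\swb}_i$ and $\bm{y}=\bar{\swb}_i^\lowdim$, yields $\limsup_i\|\Pi_i - \mumax^{-1}\E[\bar{\swb}_i^\lowdim(\bar{\swb}_i^\lowdim)^\T]\| = O(\mumax^{1/2})$, with $\Pi_i$ as in \eqref{eqn:bigPidef}.

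Next I would identify the limit of $\mumax^{-1}\E[\bar{\swb}_i^\lowdim(\bar{\swb}_i^\lowdim)^\T]$. From \eqref{eqn:zidef}--\eqref{eqn:zqidef} and \eqref{eqn:werrorlowdimdef} one has $\bar{\swb}_i^\lowdim = (\Lambda\kron I_M)\wt{\swb}_i^\lowdim$, where $\Lambda\defeq\diag\{\one_{N_1^g},\dots,\one_{N_G^g}\}\in\mbbR^{N\times G}$ is the rectangular block-diagonal replication matrix, so that $\mumax^{-1}\E[\bar{\swb}_i^\lowdim(\bar{\swb}_i^\lowdim)^\T] = (\Lambda\kron I_M)\,\Phi_i\,(\Lambda\kron I_M)^\T$ with $\Phi_i = \mumax^{-1}\E[\wt{\swb}_i^\lowdim(\wt{\swb}_i^\lowdim)^\T]$. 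Since $\|\Lambda\kron I_M\|^2 = \max_m N_m^g$ is finite and $\lim_{\mumax\to0}\limsup_i\|\Phi_i - \Phi\| = 0$ by \eqref{eqn:PiinftyequalPhi}, this matrix converges, in the same double-limit sense, to $(\Lambda\kron I_M)\Phi(\Lambda\kron I_M)^\T$. A short Kronecker computation then shows that its $(m,r)$ super-block equals $(\one_{N_m^g}\kron I_M)\Phi_{m,r}(\one_{N_r^g}^\T\kron I_M) = (\one_{N_m^g}\one_{N_r^g}^\T)\kron\Phi_{m,r}$, which is exactly the $(m,r)$ block of $\Pi$ in \eqref{eqn:Pinetworkdef}; hence $(\Lambda\kron I_M)\Phi(\Lambda\kron I_M)^\T = \Pi$.

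Finally I would chain the two pieces by the triangle inequality,
\[
\|\Pi_i - \Pi\| \le \big\|\Pi_i - (\Lambda\kron I_M)\Phi_i(\Lambda\kron I_M)^\T\big\| + \|\Lambda\kron I_M\|^2\,\|\Phi_i - \Phi\|,
\]
and take $\limsup_{i\to\infty}$ and then $\mumax\to0$: the first term vanishes at rate $O(\mumax^{1/2})$ by the first step and the second vanishes by \eqref{eqn:PiinftyequalPhi}, which gives \eqref{eqn:Piinftyoblockstructure}. The main obstacle is not conceptual but a matter of careful bookkeeping in the first step: one has to respect the order of the limits ($\limsup_i$ before $\mumax\to0$) and verify that the $O(\cdot)$ constants delivered by Lemmas \ref{lemma:approxerrorrecursion} and \ref{lemma:lowrankapprox}, by Theorem \ref{theorem:stability}, and by \eqref{eqn:wilowdimnormalizedvardef} are uniform in $\mumax$ over the range of step-sizes ensuring stability, so that replacing $\wt{\swb}_i$ by $\bar{\swb}_i^\lowdim$ costs only $O(\mumax^{3/2})$ in the \emph{unnormalized} covariance; that extra half-power of $\mumax$ beyond the crude $O(\mumax^2)$-versus-$O(\mumax)$ comparison is precisely what forces $\Pi_i - \Pi$ to vanish. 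The remaining manipulations in the second step are routine Kronecker and norm algebra.
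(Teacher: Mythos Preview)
Your proposal is correct and follows essentially the same route as the paper's proof in Appendix~\ref{app:block}: both combine Lemmas~\ref{lemma:approxerrorrecursion} and~\ref{lemma:lowrankapprox} to get $\limsup_i\E\|\wt{\swb}_i-\bar{\swb}_i^\lowdim\|^2=O(\mumax^2)$, apply a rank-one-difference decomposition of the covariance together with Cauchy--Schwarz and Theorem~\ref{theorem:stability} to show $\|\Pi_i-\mumax^{-1}\E[\bar{\swb}_i^\lowdim(\bar{\swb}_i^\lowdim)^\T]\|\to0$, and then read off the block structure of $\Pi$ from~\eqref{eqn:zidef}--\eqref{eqn:zqidef} and~\eqref{eqn:PiinftyequalPhi}. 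Your packaging via the replication matrix $\Lambda$ and the symmetric bound $\|\E\bm{x}\bm{x}^\T-\E\bm{y}\bm{y}^\T\|\le\sqrt{\E\|\bm{x}-\bm{y}\|^2}\big(\sqrt{\E\|\bm{x}\|^2}+\sqrt{\E\|\bm{y}\|^2}\big)$ is slightly cleaner than the paper's asymmetric splitting in \eqref{eqn:boundcovgap1}--\eqref{eqn:boundcovgap4}, but the substance is identical.
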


\begin{proof}
See Appendix \ref{app:block}.
\end{proof}

\section{Error Probability Analysis for Clustering}
Using the results from the previous section, we now move on to assess the error probabilities for the hypothesis testing problem \eqref{eqn:decisionrule}. To do so, we need to determine the probability distribution of the decision statistic that is generated by recursion \eqref{eqn:distributedadaptgroup}--\eqref{eqn:distributedcombinegroup}.

\subsection{Asymptotic Joint Distribution of Estimation Errors}
\label{subsection:normalpdf}
Using \eqref{eqn:bigDandbigbarH}, we rewrite the low-dimensional model \eqref{eqn:lowdimensionerrorrecursion} as
\be
\label{eqn:lowdimensionerrorrecursionnew}
\wt{\swb}_i^\lowdim = \wt{\swb}_{i-1}^\lowdim - \mumax \bar{\Hcal} \wt{\swb}_{i-1}^\lowdim + \mumax \bar{\bm{s}}_i
\ee
where $\bar{\Hcal}$ is from \eqref{eqn:barbigHdef} and 
\be
\label{eqn:barsidef}
\bar{\bm{s}}_i \defeq \mumax^{-1} \Pcal^\T \Mcal \ssb_i( \swb_{i-1} ) \in \mbbR^{GM \times 1}
\ee

\begin{lemma}[Rate of weak convergence] 
\label{lemma:normallowdimensional}
The normalized sequence, $\{\wt{\swb}_i^\lowdim /\sqrt{\mumax}; i \ge 0\}$, from \eqref{eqn:lowdimensionerrorrecursionnew} converges in distribution as $i \rightarrow \infty$ and $\mumax \rightarrow 0$ to the Gaussian random variable:
\be
\label{eqn:xidef}
\bm{\xi} \defeq \col\{ \bm{\xi}_1, \dots, \bm{\xi}_G \} \sim \mbbN(0, \Phi)
\ee
where $\bm{\xi}_m \in \mbbR^{M \times 1}$ for all $m$, and $\Phi \in \mbbR^{GM \times GM}$ is the unique solution to the Lyapunov equation \eqref{eqn:continuoustimeLyapunovEqn}.
\end{lemma}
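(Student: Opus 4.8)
The plan is to work with the rescaled iterate $\bm{x}_i \defeq \wt{\swb}_i^\lowdim/\sqrt{\mumax}$, which by \eqref{eqn:lowdimensionerrorrecursionnew} obeys the linear, time-invariant recursion $\bm{x}_i = (I_{GM}-\mumax\bar{\Hcal})\,\bm{x}_{i-1} + \sqrt{\mumax}\,\bar{\bm{s}}_i$ with $\bar{\bm{s}}_i$ from \eqref{eqn:barsidef}, and to show that the law of $\bm{x}_i$ tends to $\mbbN(0,\Phi)$ in the iterated limit. First I would unroll the recursion,
\be
\bm{x}_i = (I_{GM}-\mumax\bar{\Hcal})^i \bm{x}_0 + \sqrt{\mumax} \sum_{j=1}^{i} (I_{GM}-\mumax\bar{\Hcal})^{i-j}\, \bar{\bm{s}}_j .
\ee
Since $\bar{\Hcal}$ from \eqref{eqn:barbigHdef} is positive-definite (its diagonal blocks $\bar{H}_m$ from \eqref{eqn:barHmdef} are), for small enough $\mumax$ we have $\|I_{GM}-\mumax\bar{\Hcal}\| \le 1-\mumax\sigma$ for some $\sigma>0$, so the homogeneous (transient) term decays geometrically and drops out as $i\rightarrow\infty$. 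It therefore suffices to prove weak convergence of the stochastic part $\bm{u}_i \defeq \sqrt{\mumax}\sum_{j=1}^{i}(I_{GM}-\mumax\bar{\Hcal})^{i-j}\bar{\bm{s}}_j$, which itself satisfies $\bm{u}_i = (I_{GM}-\mumax\bar{\Hcal})\bm{u}_{i-1}+\sqrt{\mumax}\bar{\bm{s}}_i$.

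The increments $\Delta_j \defeq \sqrt{\mumax}(I_{GM}-\mumax\bar{\Hcal})^{i-j}\bar{\bm{s}}_j$ of $\bm{u}_i$ are $\{\F_j\}$-martingale differences by \eqref{eqn:martingaledifference}. Their predictable quadratic variation is
\be
\sum_{j=1}^{i} \E[\Delta_j\Delta_j^\T | \F_{j-1}] = \sum_{j=1}^{i} \mumax\,(I_{GM}-\mumax\bar{\Hcal})^{i-j}\big(\mumax^{-2}\Pcal^\T\Mcal\Rcal_{s,j}(\swb_{j-1})\Mcal\Pcal\big)(I_{GM}-\mumax\bar{\Hcal})^{i-j} .
\ee
Substituting $\Rcal_s$ for $\Rcal_{s,j}(\swb_{j-1})$ — with the error controlled by the Lipschitz condition \eqref{eqn:lipschitzcovariance}, the convergence \eqref{eqn:convergentcovariance}, and the stability estimates $\limsup_i\E\|\wt{\swb}_{i-1}\|^2 = O(\mumax)$ and $\limsup_i\E\|\wt{\swb}_{i-1}\|^4 = O(\mumax^2)$ from Theorem \ref{theorem:stability}, which make this error $O(\mumax^{\gamma_s/2})$ in expectation — this quadratic variation converges, as $i\rightarrow\infty$ and $\mumax\rightarrow 0$, to the unique solution of $(I_{GM}-\mumax\bar{\Hcal})\Phi(I_{GM}-\mumax\bar{\Hcal})+\mumax\bar{\Rcal}=\Phi$, i.e. of the continuous Lyapunov equation \eqref{eqn:continuoustimeLyapunovEqn}, in agreement with Lemma \ref{lemma:lowrankerrorcov}. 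For the conditional Lyapunov moment condition, the bounded fourth-order moment \eqref{eqn:bounded4thorder} together with Theorem \ref{theorem:stability} yields $\E\|\bar{\bm{s}}_j\|^4 = O(1)$ uniformly, whence $\sum_{j=1}^{i}\E\|\Delta_j\|^4 \le C\mumax^2\sum_{\ell\ge 0}(1-\mumax\sigma)^{4\ell} = O(\mumax)\rightarrow 0$. The martingale central limit theorem for triangular arrays then gives $\bm{u}_i \Rightarrow \mbbN(0,\Phi)$, and hence $\wt{\swb}_i^\lowdim/\sqrt{\mumax}$ converges in distribution to $\bm{\xi}\sim\mbbN(0,\Phi)$.

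An equivalent packaging is to recognize $\{\bm{u}_{\lfloor t/\mumax\rfloor}\}$ as a Euler-type discretization of the Ornstein--Uhlenbeck diffusion $d\bm{y}(t) = -\bar{\Hcal}\bm{y}(t)\,dt + \bar{\Rcal}^{1/2}\,dW(t)$, invoke the weak-convergence (diffusion-approximation) theorem for constant-step-size stochastic approximation in \cite{Kushner03} (see also the analogous development in \cite{Sayed14NOW}) to obtain process-level convergence over compact time intervals, and then let $t\rightarrow\infty$, using that this Ornstein--Uhlenbeck process relaxes to its unique stationary law $\mbbN(0,\Phi)$ with $\Phi$ solving \eqref{eqn:continuoustimeLyapunovEqn}. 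The principal obstacle, common to both routes, is the interchange of the two limits: for a fixed $\mumax$ the iterate $\wt{\swb}_i^\lowdim/\sqrt{\mumax}$ need not be Gaussian as $i\rightarrow\infty$ (Gaussianity emerges only in the $\mumax\rightarrow 0$ regime), so one must (i) control the transient term and the $\Rcal_{s,j}$-substitution error \emph{uniformly} in $i$ beyond an $\mumax$-dependent threshold, and (ii) show the predictable quadratic variation approaches $\Phi$ uniformly. Supplying this uniform control is exactly where the stability bounds of Theorem \ref{theorem:stability} and the covariance estimate of Lemma \ref{lemma:lowrankerrorcov} are indispensable, since they quantify the rate at which the approximations tighten and thereby upgrade the per-interval (or per-$\mumax$) convergence to the stated iterated-limit statement.
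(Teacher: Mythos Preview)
Your second route—invoking the diffusion-approximation machinery of \cite{Kushner03}—is precisely what the paper does. The paper's proof is a one-step application of Theorem~1.1 from \cite[p.~319]{Kushner03}, which treats recursions of the form $\bm{x}_i=\bm{x}_{i-1}+\mu\, g(\bm{x}_{i-1})+\mu\,\bm{v}_i$ and delivers the iterated-limit Gaussianity once six conditions are checked: (i) $g$ is $C^1$ with a linearization at $x^o$, (ii) $x^o$ is the unique zero of $g$, (iii) $\nabla g(x^o)$ is Hurwitz, (iv) $\{\bm{v}_i\}$ is a martingale difference, (v) $\bm{v}_i$ has a bounded $(2{+}p)$-th moment asymptotically, and (vi) the noise covariance converges. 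The paper simply identifies $g(x)=-\bar{\Hcal}x$, $\bm{v}_i=\bar{\bm{s}}_i$, and verifies (i)--(vi) from Assumption~\ref{ass:gradienterrors} and Theorem~\ref{theorem:stability}; the verification is essentially the same moment bookkeeping you outline (the fourth-moment bound \eqref{eqn:bounded4thorder} for (v), and the Lipschitz covariance \eqref{eqn:lipschitzcovariance} plus \eqref{eqn:convergentcovariance} for (vi)).

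Your first route via the martingale CLT for triangular arrays is a legitimate and more self-contained alternative, but, as you already diagnose, it does not cleanly deliver the \emph{iterated} limit $i\to\infty$ then $\mumax\to 0$: for fixed $\mumax$ the steady-state law of $\bm{u}_i$ is not Gaussian, and the triangular-array CLT only produces Gaussianity along a joint scaling $i=i(\mumax)\to\infty$. Upgrading that to the stated iterated limit requires the extra tightness/uniformity argument you flag at the end—which is exactly what Kushner's theorem packages internally. So the paper's black-box route is shorter and sidesteps the double-limit issue; your martingale-CLT route would be more elementary but needs that additional control to be complete.
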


\begin{proof}
See Appendix \ref{app:normal}.
\end{proof}

In the sequel we establish the main result that the distribution of the normalized error sequence from \eqref{eqn:networkerrorrecursiondef}, $\{\wt{\swb}_i/\sqrt{\mumax}; i \ge 0\}$, asymptotically approaches a Gaussian distribution. According to Definition 4 from \cite[p. 253]{Shiryaev80}, a random sequence $\{\bm{\zeta}_i; i\ge 0\}$ converges in distribution to some random variable $\bm{\zeta}$ if, and only if, 
\be
\lim_{i \rightarrow \infty} \E \left| f(\bm{\zeta}_i) - f(\bm{\zeta}) \right| = 0
\ee
for \emph{any} bounded continuous function $f(\cdot)$. We use this fact together with the following lemma to establish Theorem \ref{theorem:normaldistribution} further ahead. 

\begin{lemma}[Weak convergence]
\label{lemma:convergence}
Let $\{\bm{\zeta}_i; i\ge 0 \}$ and $\{\bm{\eta}_i; i \ge 0\}$ be two random sequences that are dependent on the parameter $\mumax$. If $\{\bm{\zeta}_i; i\ge 0\}$ approaches $\{\bm{\eta}_i; i\ge 0\}$ in mean-square sense:
\be
\label{eqn:convergencemoments}
\lim_{\mumax \rightarrow 0} \limsup_{i \rightarrow \infty} \E \| \bm{\zeta}_i - \bm{\eta}_i \|^2 = 0
\ee
and the variances of $\{\bm{\zeta}_i\}$ converge in the following sense:
\be
\label{eqn:convergencesigma}
\lim_{\mumax \rightarrow 0} \limsup_{i \rightarrow \infty} \E \| \bm{\zeta}_i \|^2 = \sigma^2
\ee
then it holds for any bounded continuous function $f(\cdot)$ that
\be
\label{eqn:convergenceweakly}
\lim_{\mumax \rightarrow 0} \limsup_{i \rightarrow \infty} \E | f(\bm{\zeta}_i) - f(\bm{\eta}_i) | = 0 
\ee
\end{lemma}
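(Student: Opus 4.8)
The plan is to reduce \eqref{eqn:convergenceweakly} to the uniform continuity of $f$ on compact sets, using the second-moment bounds \eqref{eqn:convergencemoments}--\eqref{eqn:convergencesigma} to discard the contributions coming from large values of the argument. The point is that $f$ is bounded and continuous but not necessarily uniformly continuous on the whole space, so continuity cannot be invoked globally and a truncation is needed. Fix $\epsilon>0$ and set $C\defeq\sup_{x}|f(x)|<\infty$. For any radius $R>0$, the restriction of $f$ to the closed ball $\{x:\|x\|\le R+1\}$ is uniformly continuous, hence there exists $\delta=\delta(\epsilon,R)\in(0,1)$ such that $\|x-y\|\le\delta$ together with $\|x\|,\|y\|\le R+1$ forces $|f(x)-f(y)|\le\epsilon$. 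Introduce the event $E_1\defeq\{\|\bm{\zeta}_i\|\le R\}\cap\{\|\bm{\zeta}_i-\bm{\eta}_i\|\le\delta\}$. On $E_1$ we have $\|\bm{\eta}_i\|\le R+\delta\le R+1$, so both arguments lie in the ball and are within $\delta$ of each other, giving $|f(\bm{\zeta}_i)-f(\bm{\eta}_i)|\le\epsilon$; on $E_1^c$ we only use the crude bound $|f(\bm{\zeta}_i)-f(\bm{\eta}_i)|\le 2C$. Since $E_1^c=\{\|\bm{\zeta}_i\|> R\}\cup\{\|\bm{\zeta}_i-\bm{\eta}_i\|>\delta\}$, the union bound together with Markov's inequality applied to $\|\cdot\|^2$ yields, for every $i$ and every admissible $\mumax$,
\[
\E|f(\bm{\zeta}_i)-f(\bm{\eta}_i)|\ \le\ \epsilon+2C\left(\frac{\E\|\bm{\zeta}_i\|^2}{R^2}+\frac{\E\|\bm{\zeta}_i-\bm{\eta}_i\|^2}{\delta^2}\right).
\]

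Next I would take $\limsup_{i\to\infty}$ and then $\lim_{\mumax\to0}$ of both sides, with $R$ and $\delta$ held fixed. By \eqref{eqn:convergencesigma} the first fraction tends to $\sigma^2/R^2$, while by \eqref{eqn:convergencemoments} the second fraction tends to $0$; it is essential here that $\delta$ does not depend on $\mumax$. This gives $\limsup_{\mumax\to0}\limsup_{i\to\infty}\E|f(\bm{\zeta}_i)-f(\bm{\eta}_i)|\le\epsilon+2C\sigma^2/R^2$. Letting $R\to\infty$ removes the last term (this is where $\sigma^2<\infty$ is used), and then letting $\epsilon\to0$ shows that this iterated $\limsup$ equals $0$; since the quantity is nonnegative, the iterated limit in \eqref{eqn:convergenceweakly} exists and is zero.

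The argument is largely routine; the delicate point is the bookkeeping of the three limiting parameters $R$, $\mumax$, and $\epsilon$: one must fix $R$ (and the continuity modulus $\delta$ that it induces) before sending $\mumax\to0$, so that the difference term in the displayed bound genuinely vanishes, and only afterwards send $R\to\infty$ exploiting the finiteness of $\sigma^2$. A secondary subtlety is that no second-moment control on $\bm{\eta}_i$ by itself is assumed; this is why the truncation is organized around the event $E_1$, where the bound on $\|\bm{\eta}_i\|$ follows from the triangle inequality $\|\bm{\eta}_i\|\le\|\bm{\zeta}_i\|+\|\bm{\zeta}_i-\bm{\eta}_i\|$ rather than from a tail estimate on $\bm{\eta}_i$ directly.
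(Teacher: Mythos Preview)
Your proposal is correct and follows essentially the same approach as the paper's proof: both arguments truncate to a compact ball, invoke uniform continuity of $f$ there, and control the complement via Markov/Chebyshev bounds using \eqref{eqn:convergencemoments}--\eqref{eqn:convergencesigma}. The only cosmetic difference is that the paper fixes the truncation radius upfront as $b=\sqrt{2c\sigma^2/\epsilon}$ (so the final bound collapses directly to $2\epsilon$), whereas you keep $R$ free and send $R\to\infty$ after the $\mumax\to0$ limit; your handling of the uniform-continuity step (working on the ball of radius $R+1$ with $\delta<1$ so that $\bm{\eta}_i$ provably lands in the same ball) is in fact slightly more careful than the paper's.
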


\begin{IEEEproof}
See Appendix \ref{app:convergence}.
\end{IEEEproof}

\begin{theorem}[Asymptotic normality]
\label{theorem:normaldistribution}
As $i \rightarrow \infty$ and $\mumax \rightarrow 0$, the normalized error sequence from \eqref{eqn:networkerrorrecursiondef}, $\{\wt{\swb}_i/\sqrt{\mumax}; i \ge 0\}$, converges in distribution \emph{close} to the Gaussian random variable:
\be
\label{eqn:zetadef}
\bm{\zeta} \defeq \col\{ \one_{N_1^g} \kron \bm{\xi}_1, \dots, \one_{N_G^g} \kron \bm{\xi}_G \} \sim \mbbN(0, \Pi)
\ee
in the following sense:
\be
\label{eqn:weakconvergence}
\lim_{\mumax \rightarrow 0} \limsup_{i \rightarrow \infty} \E \left| f\left(\frac{\wt{\swb}_i}{\sqrt{\mumax}}\right) - f(\bm{\zeta}) \right| = 0
\ee
for any bounded continuous function $f(\cdot):\mbbR^{NM \times 1} \mapsto \mbbR$, where $\{ \bm{\xi}_m\}$ are from \eqref{eqn:xidef}, and $\Pi$ is from \eqref{eqn:Pinetworkdef}. 
\end{theorem}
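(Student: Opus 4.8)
The plan is to establish \eqref{eqn:weakconvergence} by routing through the \emph{replicated} low-dimensional process $\bar{\swb}_i^\lowdim/\sqrt{\mumax}$ from \eqref{eqn:zidef}--\eqref{eqn:zqidef}: first show that $f(\wt{\swb}_i/\sqrt{\mumax})$ and $f(\bar{\swb}_i^\lowdim/\sqrt{\mumax})$ are asymptotically close in mean for every bounded continuous $f$ (using Lemma \ref{lemma:convergence}), then transfer the weak limit of the centroid sequence (Lemma \ref{lemma:normallowdimensional}) onto $\bar{\swb}_i^\lowdim/\sqrt{\mumax}$, and finally combine the two by the triangle inequality. The reason for the detour is that $\bar{\swb}_i^\lowdim$ is the natural bridge: Lemmas \ref{lemma:approxerrorrecursion} and \ref{lemma:lowrankapprox} already compare $\wt{\swb}_i$ to $\wt{\swb}_i^\longterm$ and $\wt{\swb}_i^\longterm$ to $\bar{\swb}_i^\lowdim$, while Lemma \ref{lemma:normallowdimensional} controls the Gaussian limit of the underlying centroids.

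For the first step I would invoke Lemma \ref{lemma:convergence} with $\bm{\zeta}_i \defeq \wt{\swb}_i/\sqrt{\mumax}$ and $\bm{\eta}_i \defeq \bar{\swb}_i^\lowdim/\sqrt{\mumax}$. Hypothesis \eqref{eqn:convergencemoments} follows from the triangle inequality and $\|a+b\|^2 \le 2\|a\|^2 + 2\|b\|^2$:
\[
\E\|\bm{\zeta}_i - \bm{\eta}_i\|^2 = \mumax^{-1}\E\|\wt{\swb}_i - \bar{\swb}_i^\lowdim\|^2 \le 2\mumax^{-1}\E\|\wt{\swb}_i - \wt{\swb}_i^\longterm\|^2 + 2\mumax^{-1}\E\|\wt{\swb}_i^\longterm - \bar{\swb}_i^\lowdim\|^2,
\]
where Lemmas \ref{lemma:approxerrorrecursion} and \ref{lemma:lowrankapprox} bound each $\limsup_i$ term on the right by $O(\mumax^2)$, so the whole right-hand side is $O(\mumax)$ and vanishes as $\mumax\to0$. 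Hypothesis \eqref{eqn:convergencesigma} follows because $\E\|\bm{\zeta}_i\|^2 = \Tr(\Pi_i)$ with $\Pi_i$ from \eqref{eqn:bigPidef}, and Theorem \ref{theorem:blockstructure} together with $|\Tr(\Pi_i-\Pi)| \le NM\,\|\Pi_i-\Pi\|$ (fixed dimension) gives $\lim_{\mumax\to0}\limsup_i\Tr(\Pi_i) = \Tr(\Pi) =: \sigma^2 < \infty$. Lemma \ref{lemma:convergence} then yields $\lim_{\mumax\to0}\limsup_{i\to\infty}\E|f(\wt{\swb}_i/\sqrt{\mumax}) - f(\bar{\swb}_i^\lowdim/\sqrt{\mumax})| = 0$ for every bounded continuous $f$.

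For the second step I would introduce the fixed linear ``replication'' map $T:\mbbR^{GM\times1}\mapsto\mbbR^{NM\times1}$, $T(\col\{x_m\}_{m=1}^{G}) \defeq \col\{\one_{N_m^g}\kron x_m\}_{m=1}^{G}$, so that $\bar{\swb}_i^\lowdim = T\wt{\swb}_i^\lowdim$ and $\bm{\zeta} = T\bm{\xi}$ with $\bm{\xi}\sim\mbbN(0,\Phi)$ from \eqref{eqn:xidef}; since $T\bm{\xi}$ is zero-mean Gaussian with covariance $T\Phi T^\T$ whose $(m,r)$-block equals $(\one_{N_m^g}\one_{N_r^g}^\T)\kron\Phi_{m,r}$, this confirms $\bm{\zeta}\sim\mbbN(0,\Pi)$ with $\Pi$ as in \eqref{eqn:Pinetworkdef}. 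For any bounded continuous $f:\mbbR^{NM\times1}\mapsto\mbbR$, the composition $f\circ T$ is bounded and continuous on $\mbbR^{GM\times1}$, so Lemma \ref{lemma:normallowdimensional} applied with test function $f\circ T$ gives $\lim_{\mumax\to0}\limsup_{i\to\infty}\E|f(\bar{\swb}_i^\lowdim/\sqrt{\mumax}) - f(\bm{\zeta})| = 0$. The conclusion \eqref{eqn:weakconvergence} then follows from $\E|f(\wt{\swb}_i/\sqrt{\mumax}) - f(\bm{\zeta})| \le \E|f(\wt{\swb}_i/\sqrt{\mumax}) - f(\bar{\swb}_i^\lowdim/\sqrt{\mumax})| + \E|f(\bar{\swb}_i^\lowdim/\sqrt{\mumax}) - f(\bm{\zeta})|$ upon taking $\limsup_i$ and then $\lim_{\mumax\to0}$.

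The main obstacle is not a difficult estimate but the consistent bookkeeping of the iterated limit $\lim_{\mumax\to0}\limsup_{i\to\infty}$ across all invoked results: one must check that Lemma \ref{lemma:convergence} is stated precisely so as to accept hypotheses in this exact iterated form, that the $O(\mumax^2)$ approximation gaps of Lemmas \ref{lemma:approxerrorrecursion}--\ref{lemma:lowrankapprox} indeed survive division by $\mumax$ (leaving an $O(\mumax)$ bound that still vanishes), and that spectral-norm convergence of $\Pi_i$ transfers to convergence of its trace. Once these points are in place, the argument is merely the composition of the four lemmas with two triangle inequalities and the observation that pre-composition with the linear map $T$ preserves boundedness and continuity of test functions.
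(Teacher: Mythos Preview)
Your proposal is correct and follows essentially the same approach as the paper: the paper splits the left side of \eqref{eqn:weakconvergence} into three terms (routing through both $\wt{\swb}_i^\longterm/\sqrt{\mumax}$ and $\bar{\swb}_i^\lowdim/\sqrt{\mumax}$) and applies Lemma~\ref{lemma:convergence} twice, whereas you collapse the first two hops into a single application and use Theorem~\ref{theorem:blockstructure} rather than Lemma~\ref{lemma:msdblock} for the variance hypothesis, but the skeleton---approximation Lemmas~\ref{lemma:approxerrorrecursion}--\ref{lemma:lowrankapprox} feeding Lemma~\ref{lemma:convergence}, plus Lemma~\ref{lemma:normallowdimensional} for the Gaussian limit---is identical. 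Your explicit introduction of the replication map $T$ and the observation that $f\circ T$ remains bounded and continuous is a useful clarification that the paper leaves implicit.
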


\begin{proof}
Using the triangle inequality, we have
\begin{align}
\label{eqn:weakconvergence2}
\E \left| f\left( \frac{ \wt{\swb}_i }{ \sqrt{\mumax} } \right) - f(\bm{\zeta}) \right|
& \le \E \left| f\left( \frac{ \wt{\swb}_i }{ \sqrt{\mumax} } \right) - f\left( \frac{ \wt{\swb}_i^\longterm }{ \sqrt{\mumax} } \right) \right| + \E \left| f\left( \frac{ \wt{\swb}_i^\longterm }{ \sqrt{\mumax} } \right) - f\left( \frac{ \bar{\swb}_i^\lowdim }{ \sqrt{\mumax} } \right) \right| \nn \\
& \qquad + \E \left| f\left( \frac{ \bar{\swb}_i^\lowdim }{ \sqrt{\mumax} } \right) - f( \bm{\zeta} ) \right|
\end{align}
where $\wt{\swb}_i^\longterm$ is from the long-term model \eqref{eqn:longtermerrorrecursion}, and $\bar{\swb}_i^\lowdim$ is from \eqref{eqn:zidef} and is related to the low-dimensional model \eqref{eqn:lowdimensionerrorrecursion}. By Lemma \ref{lemma:msdblock}, the variances of the sequence $\{ \wt{\swb}_i / \sqrt{\mumax}; i \ge 0 \}$ converge to its normalized MSD in \eqref{eqn:steadystateMSDdefblock} in a sense similar to \eqref{eqn:convergencesigma}. Using Lemma \ref{lemma:approxerrorrecursion}, it is clear that $\{ \wt{\swb}_i / \sqrt{\mumax}; i \ge 0 \}$ approaches $\{ \wt{\swb}_i^\longterm / \sqrt{\mumax}; i \ge 0 \}$ in a sense similar to \eqref{eqn:convergencemoments}. Therefore, by calling upon Lemma \ref{lemma:convergence}, we conclude that the limit superior of the first term on the RHS of \eqref{eqn:weakconvergence2} vanishes. Likewise, using Lemmas \ref{lemma:approxerrorrecursion} and \ref{lemma:msdblock}, it can be verified that the variances of the sequence $\{ \wt{\swb}_i^\longterm / \sqrt{\mumax}; i \ge 0 \}$ also converge to the same normalized MSD in \eqref{eqn:steadystateMSDdefblock}. Therefore, from Lemmas \ref{lemma:lowrankapprox} and \ref{lemma:convergence}, the limit superior of the second term on the RHS of \eqref{eqn:weakconvergence2} vanishes. The limit superior of the third term vanishes since $\{ \bar{\swb}_i^\lowdim / \sqrt{\mumax}; i \ge 0 \}$ converges in distribution to $\bm{\zeta}$, which follows from Lemma \ref{lemma:normallowdimensional}. Therefore, the limit superior of the RHS of \eqref{eqn:weakconvergence2} vanishes when $i \rightarrow \infty$ and $ \mumax \rightarrow 0$.
\end{proof}

Theorem \ref{theorem:normaldistribution} allows us to approximate the distribution of $\wt{\swb}_i/\sqrt{\mumax}$ by the Gaussian distribution $\mbbN(0, \Pi)$ for large enough $i$ and small enough $\mumax$.

\subsection{Statistical Decision on Clustering}
In Theorem \ref{theorem:normaldistribution}, we established that for large enough $i$ and for sufficiently small $\mumax$, the joint distribution of the individual estimators $\{ \bm{w}_{k,i}; k = 1, 2, \dots, N \}$ can be well approximated by a Gaussian distribution \eqref{eqn:zetadef}. Therefore, the marginal distribution for any pair of estimators, say, $\bm{w}_{k,i}$ and $\bm{w}_{\ell,i}$, can be well approximated by the Gaussian distribution:
\be
\label{eqn:steadystatedistributiontwoagents}
\begin{bmatrix}
\bm{w}_{k,i} \\
\bm{w}_{\ell,i} \\
\end{bmatrix} \sim \mbbN\left( \begin{bmatrix}
w_k^o \\
w_\ell^o \\
\end{bmatrix}, \mumax \begin{bmatrix}
\Pi_{k,k} & \Pi_{k,\ell} \\
\Pi_{\ell,k} & \Pi_{\ell,\ell} \\
\end{bmatrix} \right)
\ee
where $w_k^o$ and $w_\ell^o$ are their individual minimizers, and $\Pi_{k,\ell}$ denotes the $(k,\ell)$-th block of $\Pi$ with block size $M \times M$. Without loss of generality, let us consider the scenario where agent $k$ is from group $\Gcal_m$ in cluster $\Ccal_q$ and agent $\ell$ is from group $\Gcal_n$ in cluster $\Ccal_r$, i.e., $k \in \Gcal_m \subseteq \Ccal_q$ and $\ell \in \Gcal_n \subseteq \Ccal_r$. Then, we have from Definition \ref{def:cluster} that
\be
\label{eqn:meankellandqr}
w_k^o = w_q^\star, \qquad w_\ell^o = w_r^\star
\ee
From Theorem \ref{theorem:blockstructure}, the covarince matrix $\Pi$ possesses the block structure shown in \eqref{eqn:Pinetworkdef}. Using \eqref{eqn:Pinetworkdef}, and noticing that $k \in \Gcal_m$ and $\ell \in \Gcal_n$, it is obvious that 
\be
\label{eqn:varkellandqr}
\Pi_{k,k} = \Phi_{m,m}, \; \Pi_{k,\ell} = \Phi_{m,n}, \;
\Pi_{\ell,k} = \Phi_{n,m}, \; \Pi_{\ell,\ell} = \Phi_{n,n} 
\ee
Then, it follows from \eqref{eqn:steadystatedistributiontwoagents}--\eqref{eqn:varkellandqr} that
\be
\label{eqn:steadystatedistributiontwoagentsGroup}
\begin{bmatrix}
\bm{w}_{k,i} \\
\bm{w}_{\ell,i} \\
\end{bmatrix} \sim \mbbN\left( \begin{bmatrix}
w_q^\star \\
w_r^\star \\
\end{bmatrix}, \mumax \begin{bmatrix}
\Phi_{m,m} & \Phi_{m,n} \\
\Phi_{n,m} & \Phi_{n,n} \\
\end{bmatrix} \right)
\ee
which means that the mean and covariance of the joint distribution for any pair of agents $k$ and $\ell$ only depends on their groups. In other words, for any two agents $k_1$ and $k_2$ from the same group $\Gcal_m$, the joint distribution of $\{k_1, \ell\}$ and the joint distribution of $\{k_2, \ell\}$ will be well approximated by the same Gaussian distribution in \eqref{eqn:steadystatedistributiontwoagentsGroup}. Therefore, if both agents $k_1$ and $k_2$ need to decide whether agent $\ell$ is in the same cluster as they are, then they will have the same error probabilities in the hypothesis test \eqref{eqn:decisionrule}.

Based on \eqref{eqn:steadystatedistributiontwoagentsGroup}, the hypothesis test problem for clustering now becomes that of determining whether or not the two (near) Gaussian random vectors $\bm{w}_{k,i}$ and $\bm{w}_{\ell,i}$ have the same mean. Suppose the samples from the two variables are paired. The difference 
\be
\label{eqn:dkelldef}
\bm{d}_{k,\ell} \defeq \bm{w}_{k,i} - \bm{w}_{\ell,i}
\ee
serves as a sufficient statistics \cite{Anderson58}. Since $\bm{w}_{k,i}$ and $\bm{w}_{\ell,i}$ are jointly Gaussian in \eqref{eqn:steadystatedistributiontwoagentsGroup}, their difference $\bm{d}_{k,\ell}$ is also Gaussian: 
\be
\label{eqn:dkldistribution}
\bm{d}_{k,\ell} \sim \mbbN( d_{q,r}^\star, \mumax \Delta_{m,n})
\ee
where
\begin{align}
\label{eqn:meandkldef}
d_{q,r}^\star & \defeq w_q^\star - w_r^\star \\
\label{eqn:vardkldef}
\Delta_{m,n} & \defeq \Phi_{m,m} + \Phi_{n,n} - \Phi_{m,n} - \Phi_{n,m} \ge 0
\end{align}
If the agents $k$ and $\ell$ are from the same cluster such that $q = r$, then hypothesis $\mbbH_0$ in \eqref{eqn:decisionrule} is true and $d_{q,r}^\star = 0$; otherwise, hypothesis $\mbbH_1$ in \eqref{eqn:decisionrule} is true and $d_{q,r}^\star \ne 0$. The hypothesis test for clustering becomes to test whether or not the difference $\bm{d}_{k,\ell}$ in \eqref{eqn:dkelldef} is zero mean \emph{without} knowing its covariance matrix $\mumax \Delta_{m,n}$. If $N_\sam$ independent samples of $\bm{d}_{k,\ell}$ are available for testing, where $N_\sam > M$, and $\Delta_{m,n}$ is non-singular, then according to the Neyman-Pearson criterion \cite{Poor98}, the likelihood ratio test is given by \cite[p. 164]{Anderson58}
\be
\label{eqn:Tsquaretest}
\bm{T}_{k,\ell}^2 \defeq N_\sam \bar{\bm{x}}^\T \bm{S}^{-1} \bar{\bm{x}} \overset{\mbbH_0}{\underset{\mbbH_1}{\lessgtr}} \theta_{k,\ell}
\ee
where $\bm{T}_{k,\ell}^2$ is called Hotelling's T-square statistic, $\bar{\bm{x}}$ is the sample mean of $\bm{d}_{k,\ell}$, $\bm{S}$ is the unbiased sample covariance matrix, and $\theta_{k,\ell}$ is the predefined threshold from \eqref{eqn:decisionrule}. The scaled T-square statistics $\frac{ N_\sam - M }{ (N_\sam - 1) M } \cdot \bm{T}_{k,\ell}^2$ has a non-central F-distribution with $M$ and $N_\sam - M$ degrees of freedom and non-centrality parameter $N_\sam \mumax^{-1} (d_{q,r}^\star)^\T \Delta_{m,n}^{-1} d_{q,r}^\star$ \cite[p. 480]{Johnson95v2}. When $d_{q,r}^\star = 0$, it reduces to a central F-distribution \cite[p. 322]{Johnson95v2}.

However, because stochastic iterative algorithms employ very small step-sizes, sampling their steady-state estimators over time does not produce independent samples. In many scenarios we only have one sample available for testing, where the sample mean reduces to the sample itself, and the sample covariance matrix is not even available. In order to carry out the hypothesis test, we replace the sample covariance matrix by the identity matrix. Then, the Hotelling's T-square test \eqref{eqn:Tsquaretest} becomes
\be
\label{eqn:deltakelldef}
\bm{\delta}_{k,\ell}^2 \defeq \| \bm{d}_{k,\ell} \|^2 \overset{\mbbH_0}{\underset{\mbbH_1}{\lessgtr}} \theta_{k,\ell}
\ee
where we re-used $\bm{d}_{k,\ell}$ to denote the only available sample for testing. The decision statistic $\bm{\delta}_{k,\ell}^2$ is a quadratic form of the (near) Gaussian random vector $\bm{d}_{k,\ell}$. Using \eqref{eqn:dkldistribution}, the mean of $\bm{\delta}_{k,\ell}^2$ is given by
\be
\label{eqn:chikl2testmean}
\E \bm{\delta}_{k,\ell}^2 = \E \| \bm{d}_{k,\ell} \|^2 = \E \Tr( \bm{d}_{k,\ell} \bm{d}_{k,\ell}^\T ) = \Tr( \E \bm{d}_{k,\ell} \bm{d}_{k,\ell}^\T ) = \| d_{q,r}^\star \|^2 + \mumax \Tr(\Delta_{m,n}) 
\ee
and the variance of $\bm{\delta}_{k,\ell}^2$ is given by (see Appendix \ref{app:moments})
\be
\label{eqn:chikl2testvar}
\Var(\bm{\delta}_{k,\ell}^2) = \E \| \bm{d}_{k,\ell} \|^4 - ( \E \| \bm{d}_{k,\ell} \|^2 )^2 = 4 \mumax \| d_{q,r}^\star \|_{\Delta_{m,n}}^2  + 2 \mumax^2 \Tr(\Delta_{m,n}^2)
\ee
It is seen that the mean of $\bm{\delta}_{k,\ell}^2$ is dominated by $\| d_{q,r}^\star \|^2$ for sufficiently small step sizes. Since the variance of $\bm{\delta}_{k,\ell}^2$ is in the order of $\mumax$, according to Chebyshev's inequality \cite[p. 47]{Shiryaev80}, we have
\be
\Pr[ | \bm{\delta}_{k,\ell}^2 - \E \bm{\delta}_{k,\ell}^2 | \ge c ] \le \frac{\Var(\bm{\delta}_{k,\ell}^2)}{c} = O(\mumax)
\ee
for any constant $c > 0$. Therefore, for sufficiently small step sizes, the probability mass of $\bm{\delta}_{k,\ell}^2$ will highly concentrate around $\E \bm{\delta}_{k,\ell}^2$. When hypothesis $\mbbH_0$ is true, we have $d_{q,r}^\star = 0$ and $\E \bm{\delta}_{k,\ell}^2 = \mumax \Tr(\Delta_{m,n}) = O(\mumax) \approx 0$; when hypothesis $\mbbH_1$ is true, we have $d_{q,r}^\star \neq 0$ and $\E \bm{\delta}_{k,\ell}^2 = \| d_{q,r}^\star \|^2 + O(\mumax) \approx \| d_{q,r}^\star \|^2$. That is, the probability mass of $\bm{\delta}_{k,\ell}^2$ under $\mbbH_0$ concentrates near $0$ while the probability mass of $\bm{\delta}_{k,\ell}^2$ under $\mbbH_1$ concentrates near $\| d_{q,r}^\star \|^2  = \| w_q^\star - w_r^\star \|^2 > 0$ (which is a constant that is independent of $\mumax$). Obviously, the threshold $\theta_{k,\ell}$ should be chosen between 0 and $\| d_{q,r}^\star \|^2$. By doing so, the Type-I error will correspond to the right tail probability of $\bm{\delta}_{k,\ell}^2$ when $d_{q,r}^\star = 0$ (see \eqref{eqn:typeIerrordef} further ahead) and the Type-II error will correspond to the left tail probability of $\bm{\delta}_{k,\ell}^2$ when $d_{q,r}^\star \neq 0$ (see \eqref{eqn:typeIIerrordef} further ahead).

In order to examine the statistical properties of $\bm{\delta}_{k,\ell}^2$ and to perform the analysis for error probabilities, let us introduce the eigen-decomposition of $\Delta_{m,n}$ in \eqref{eqn:vardkldef} and denote it by
\be
\label{eqn:Deltaeigen}
\Delta_{m,n} = U_\Delta \Lambda_\Delta U_\Delta^\T 
\ee
where $U_\Delta$ is orthonormal and $\Lambda_\Delta$ is diagonal and nonnegative. Let further 
\be
\label{eqn:xdef}
\bm{x} \defeq \Lambda_\Delta^{- 1/2} U_\Delta^\T \bm{d}_{k,\ell}, \quad 
\bar{x} \defeq \Lambda_\Delta^{- 1/2} U_\Delta^\T d_{q,r}^\star
\ee
Since $\bm{d}_{k,\ell} \sim \mbbN( d_{q,r}^\star, \mumax \Delta_{m,n})$, it follows from \eqref{eqn:Deltaeigen} and \eqref{eqn:xdef} that $\bm{x} \sim \mbbN(\bar{x}, \mumax I_M)$. Substituting \eqref{eqn:Deltaeigen} and \eqref{eqn:xdef} into \eqref{eqn:deltakelldef} yields
\be
\label{eqn:delta2andx}
\bm{\delta}_{k,\ell}^2 = \bm{x}^\T \Lambda_\Delta \bm{x} = \sum_{h=1}^{M} \lambda_h \bm{x}_h^2
\ee
where $\bm{x}_h$ denotes the $h$-th elements of $\bm{x}$, and $\lambda_h$ denotes the $h$-th element on the diagonal of $\Lambda_\Delta$. From \eqref{eqn:delta2andx}, it is obvious that $\bm{\delta}_{k,\ell}^2$ is a weighted sum of independent squared Gaussian random variables. When hypothesis $\mbbH_0$ is true, we have $d_{q,r}^\star = 0$ and $\bar{x} = 0$ by \eqref{eqn:xdef}. In this case, $\bm{\delta}_{k,\ell}^2$ reduces to a weighted sum of independent Gamma random variables (because squared zero-mean Gaussian random variables follow Gamma distributions \cite[p. 337]{Johnson95v1}), whose pdf is available in closed-form (but is very complicated) \cite{Moschopoulos85, Kara06TCOM}. When hypothesis $\mbbH_1$ is true and $\| d_{q,r}^\star \|^2 > 0$, the pdf of $\bm{\delta}_{k,\ell}^2$ is generally not available in closed-form. Several procedures have been proposed in \cite{Imhof61biometrika, Sheil77JRSS, Liu09CSDA, Duchesne10CSDA, Ha13REVSTAT} for numerical evaluation of its tail probability.  Instead of relying on the precise pdf of $\bm{\delta}_{k,\ell}^2$, we shall provide some useful constructions in the sequel for the error probabilities in the hypothesis test problem \eqref{eqn:deltakelldef}.

\subsection{Error Probabilities}
For any $k \in \Gcal_m \subseteq \Ccal_q$ and $\ell \in \Gcal_n \subseteq \Ccal_r$, the Type-I error, namely, the false alarm for incorrect rejection of a true $\mbbH_0$, is given by 
\be
\label{eqn:typeIerrordef}
\mbox{Type-I error}: \qquad \Pr[\bm{\delta}_{k,\ell}^2 > \theta_{k,\ell} | d_{q,r}^\star = 0]
\ee
and the Type-II error, namely, the missing detection for incorrect rejection of a true $\mbbH_1$, is given by 
\be
\label{eqn:typeIIerrordef}
\mbox{Type-II error}: \qquad \Pr[\bm{\delta}_{k,\ell}^2 < \theta_{k,\ell} | d_{q,r}^\star \neq 0]
\ee
It is seen that the Type-I error corresponds to the right tail probability of $\bm{\delta}_{k,\ell}^2$ with $d_{q,r}^\star = 0$ and the Type-II error corresponds to the left tail probability of $\bm{\delta}_{k,\ell}^2$ with $d_{q,r}^\star \neq 0$. This is a fundamental difference between the two types of errors and, therefore, different techniques are needed to approximate them. Specifically, for the Type-II error, the pdf of $\bm{\delta}_{k,\ell}^2$ is close to a bell shape and can be well approximated by a Gaussian pdf. Then, the Type-II error probability can be bounded by using Chernoff bound \cite{Cover06}. However, this technique does not apply to the Type-I error because when $d_{q,r}^\star = 0$, the pdf of $\bm{\delta}_{k,\ell}^2$ concentrates on the positive side of the origin point and is  skewed with a long right tail. Consequently, we need to take a different approach to bound the Type-I error probability.

\subsubsection{Type-I Error}
We first note that
\be
\label{eqn:deltaandx}
\bm{\delta}_{k,\ell}^2 = \bm{x}^\T \Lambda_\Delta \bm{x} \le \|\Delta_{m,n}\| \cdot \| \bm{x} \|^2 
\ee
where $\Lambda_\Delta$ is from \eqref{eqn:Deltaeigen}. This means that if $\bm{\delta}_{k,\ell}^2 > \theta_{k,\ell}$, then $\|\Delta_{m,n}\| \cdot \| \bm{x} \|^2 > \theta_{k,\ell}$ must be true, which further implies that the event $\{ \bm{\delta}_{k,\ell}^2 > \theta_{k,\ell} \}$ is a subset of the event $\{ \|\Delta_{m,n}\| \cdot \| \bm{x} \|^2 > \theta_{k,\ell} \}$. Therefore,
\be
\label{eqn:boundType1error}
\Pr[\bm{\delta}_{k,\ell}^2 > \theta_{k,\ell} | d_{q,r}^\star = 0] \le \Pr [ \| \bm{x} \|^2 > \theta_{k,\ell}' | \bar{x} = 0 ] 
\ee
where $\bar{x}$ is from \eqref{eqn:xdef}, and 
\be
\theta_{k,\ell}' \defeq \frac{\theta_{k,\ell}}{ \| \Delta_{m,n} \|}
\ee
Since $\bar{x} = 0$, $\mumax^{-1} \| \bm{x} \|^2$ follows a central chi-square distribution with $M$ degrees of freedom \cite[p. 415]{Johnson95v1}. Therefore, using the Chernoff bound for the central chi-square distribution \cite[Lemma 1, p. 2500]{Li07JMLR}, we get from \eqref{eqn:boundType1error} that 
\be
\Pr[\bm{\delta}_{k,\ell}^2 > \theta_{k,\ell} | d_{q,r}^\star = 0] \le 1 - \Pr [ \| \bm{x} \|^2 \le \theta_{k,\ell}' | \bar{x} = 0 ] \le \left(\frac{ \theta_{k,\ell}' e }{\mumax M }\right)^{ M/2 } \exp\left(- \frac{\theta_{k,\ell}'}{ 2\mumax} \right)
\ee
for $\mumax < \theta_{k,\ell}'/M$, where $e$ is Euler's number. Therefore, when $\mumax$ is small enough, the Type-I error probability decays exponentially at a rate of $O(e^{-c_1/\mumax})$ for some constant $c_1 > 0$. 

\subsubsection{Type-II Error}
We consider the characteristic function of $\bm{\delta}_{k,\ell}^2$. Since $\{\bm{x}_h \}$ are mutually-independent, the characteristic function of $\bm{\delta}_{k,\ell}^2$ is given by
\be
\label{eqn:characteristiczeta2}
c_{\bm{\delta}_{k,\ell}^2}(t) \defeq \E \left[e^{j t \bm{\delta}_{k,\ell}^2}\right] \!=\! \E \left[e^{j t \sum_{h=1}^{M} \lambda_h \bm{x}_h^2 }\right] \!=\! \prod_{h=1}^{M} \E \left[ e^{j t \lambda_h \bm{x}_h^2 } \right]
\ee
where we used \eqref{eqn:deltaandx}. Since $d_{q,r}^\star \neq 0$ in this case, $\bm{x}$ from \eqref{eqn:xdef} has nonzero mean $\bar{x} \neq 0$. Therefore, each $\mumax^{-1} \bm{x}_h^2$ is a non-central chi-square random variable with one degree of freedom and non-centrality $\mumax^{-1} \bar{x}_h^2$ \cite[p. 433]{Johnson95v2}. The characteristic function of $\bm{x}_h^2$ is then given by \cite[p. 437]{Johnson95v2}:
\be
\label{eqn:characteristicncx2}
\E \left[ e^{jt \bm{x}_h^2 } \right] = \frac{1}{\sqrt{1 - 2 j t \mumax}} e^{j \bar{x}_h^2 t / ( 1 - 2 j t \mumax)}
\ee
Substituting \eqref{eqn:characteristicncx2} into \eqref{eqn:characteristiczeta2} yields:
\be
\label{eqn:characteristiczeta3}
c_{\bm{\delta}_{k,\ell}^2}(t) = \prod_{h=1}^{M} \frac{1}{\sqrt{1 - 2 j t \mumax \lambda_h}} \cdot e^{j \bar{x}_h^2 t \lambda_h / (1 - 2 j t \mumax \lambda_h)}
\ee
When $\mumax$ is sufficiently small, we have
\be
\label{eqn:approxfactors}
\frac{1}{\sqrt{1 - 2 j t \mumax \lambda_h}} \approx 1,  \frac{1}{1 - 2 j t \mumax \lambda_h} \approx 1 + 2 j t \mumax \lambda_h
\ee
Using \eqref{eqn:dmnoandbarx}, we can approximate $c_{\bm{\delta}_{k,\ell}^2}(t)$ in \eqref{eqn:characteristiczeta3} by 
\begin{align}
\label{eqn:characteristiczeta4}
c_{\bm{\delta}_{k,\ell}^2}(t) & \approx \prod_{h=1}^{M} e^{j \bar{x}_h^2 t \lambda_h (1 + 2 j t \mumax \lambda_h)} \nn \\
& = e^{j t (\sum_{h=1}^{M} \lambda_h \bar{x}_h^2) - 2 t^2 \mumax (\sum_{h=1}^{M} \lambda_h^2 \bar{x}_h^2)} \nn \\
& = e^{j t \| d_{q,r}^\star \|^2 - 2 t^2 \mumax \| d_{q,r}^\star \|_{\Lambda_\Delta}^2}
\end{align}
where we used the fact that
\be
\label{eqn:dmnoandbarx}
\sum_{h=1}^{M} \lambda_h \bar{x}_h^2 = \| d_{q,r}^\star \|^2, \quad
\sum_{h=1}^{M} \lambda_h^2 \bar{x}_h^2 = \| d_{q,r}^\star \|_{\Lambda_\Delta}^2
\ee
Note that the RHS of \eqref{eqn:characteristiczeta4} coincides with the characteristic function of a Gaussian distribution with mean $\| d_{q,r}^\star \|^2$ and variance $4 \mumax \| d_{q,r}^\star \|_{\Lambda_\Delta}^2$ \cite[p. 89]{Johnson95v1}. Since the distribution of a random variable is uniquely determined by its characteristic function, result \eqref{eqn:characteristiczeta4} implies that $\bm{\delta}_{k,\ell}^2 \sim \mbbN(\| d_{q,r}^\star \|^2, 4 \mumax \| d_{q,r}^\star \|_{\Lambda_\Delta}^2)$ approximately for sufficiently small $\mumax$. Thus, 
\be
\Pr[\bm{\delta}_{k,\ell}^2 < \theta_{k,\ell} | d_{q,r}^\star \neq 0] \approx Q\left( \frac{ \| d_{q,r}^\star \|^2 - \theta_{k,\ell}}{2 \mumax^{1/2} \| d_{q,r}^\star \|_{\Lambda_\Delta}} \right) 
\le \frac{1}{2} e^{ - ( \| d_{q,r}^\star \|^2 - \theta_{k,\ell} )^2 / 8 \mumax \| d_{q,r}^\star \|_{\Lambda_\Delta}^2 }
\ee
where $Q(\cdot)$ denotes the $Q$-function, which is the tail probability of the standard Gaussian distribution, and the last step is by using the Chernoff bound \cite[p. 380]{Cover06}. Therefore, when $\mumax$ is small enough, the Type-II error decays exponentially at a rate of $O(e^{-c_2/\mumax})$ for some constant $c_2 > 0$.

\subsubsection{A Special Case}
For the purpose of illustration only, we consider a special case where $\Delta_{m,n} = \sigma_{m,n}^2  I_M$. In this case, the pdf of $\bm{\delta}_{k,\ell}^2$ has a closed-form pdf. When $\mbbH_1$ is true and $\| d_{q,r}^\star \|^2 > 0$, the quadratic form $\bm{\delta}_{k,\ell}^2 / (\mumax \sigma_{m,n}^2)$ reduces to a non-central chi-square random variable with $M$ degrees of freedom and non-centrality parameter $\| d_{q,r}^\star \|^2 / \mumax \sigma_{m,n}^2$ \cite[p. 433]{Johnson95v2}. Let us denote the non-central chi-square distribution with $d$ degrees of freedom and non-centrality parameter $\lambda$ by $\chi_d^2(\lambda)$. The pdf of $\chi_d^2(\lambda)$ is then given by \cite[p. 433]{Johnson95v2}:
\be
\label{eqn:fchi2def}
f_{\chi^2}(x; d, \lambda) = \frac{1}{2} \left( \frac{x}{\lambda} \right)^{(d-2)/4}  e^{-(x+\lambda)/2} I_{(d-2)/2} (\sqrt{\lambda x}) \!\!
\ee
for $x \ge 0$, where $I_h(x)$ denotes the $h$-th order modified Bessel function of the first kind. Then,
\be
\frac{\bm{\delta}_{k,\ell}^2}{\mumax \sigma_{m,n}^2} \sim \chi_M^2\left( \frac{\| d_{q,r}^\star \|^2}{\mumax \sigma_{m,n}^2} \right)
\ee
and the pdf of $\bm{\delta}_{k,\ell}^2$ is given by
\be
\label{eqn:fzdef}
f(z) = \frac{1}{\mumax \sigma_{m,n}^2} \cdot f_{\chi^2}\left(\frac{z}{\mumax \sigma_{m,n}^2}; M, \frac{\| d_{q,r}^\star \|^2}{\mumax \sigma_{m,n}^2} \right)
\ee
where $f_{\chi^2}(\cdot)$ is from \eqref{eqn:fchi2def}. When $\mbbH_0$ is true and $\| d_{q,r}^\star \|^2 = 0$, the pdf $f(z)$ in \eqref{eqn:fzdef} reduces to a scaled central chi-square distribution \cite[p. 415]{Johnson95v1}:
\be
\label{eqn:fzcentraldef}
f(z) = \frac{1}{\mumax \sigma_{m,n}^2} \cdot f_{\chi^2}\left(\frac{z}{\mumax \sigma_{m,n}^2}; M, 0 \right)
\ee
We plot the pdf $f(z)$ from \eqref{eqn:fzdef} and \eqref{eqn:fzcentraldef} in Fig. \ref{fig:pdf_chi2}. It can be observed that when $M$, $\| d_{q,r}^\star \|^2$, and $\sigma_{m,n}^2$ are fixed, in both $\mbbH_0$ (blue curves) and $\mbbH_1$ (red curves) cases, the probability mass of $\bm{\delta}_{k,\ell}^2$ concentrates more around its mean as $\mumax$ decreases. When $q \neq r$ (i.e., $\mbbH_1$ is true), the mean of $\bm{\delta}_{k,\ell}$ is close to $\| d_{q,r}^\star \|^2 = 1$ for sufficiently small $\mumax$; when $q = r$ (i.e., $\mbbH_0$ is true), the mean is close to zero. The right tail probabilities of the blue curves (under $\mbbH_0$) and the left tail probabilities of the red curves (under $\mbbH_1$) all decay exponentially. In addition, it is seen that the pdf of $\bm{\delta}_{k,\ell}^2$ under $\mbbH_1$ (the red curves with $\| d_{q,r}^\star \|^2 > 0$) is near symmetric and is in bell-shape, which agrees with the Gaussian approximation we made when evaluating the Type-II error (mis-detection) for the general case. On the other hand, the pdf of $\bm{\delta}_{k,\ell}^2$ under $\mbbH_0$ (the blue curves with $\| d_{q,r}^\star \|^2 = 0$) concentrates close to zero and has large skewness with a long tail on the RHS, which distinguishes itself from Gaussian distributions; this demonstrates our previous statement that it is not appropriate to assess the Type-I error (false alarm) by approximating the pdf of $\bm{\delta}_{k,\ell}^2$ under $\mbbH_0$ with Gaussian distributions.

\begin{figure}
\centering
\includegraphics[width=3.5in]{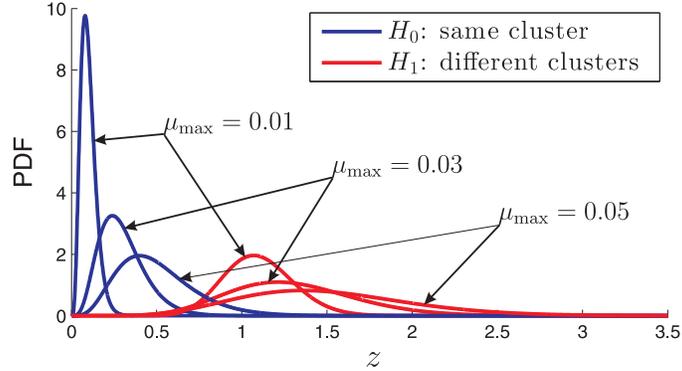}
\caption{The pdf of $\bm{\delta}_{k,\ell}^2$ defined in \eqref{eqn:fzdef} and \eqref{eqn:fzcentraldef} with $M = 10$, $\| d_{q,r}^\star \|^2 = 1$, $\sigma_{m,n}^2 = 1$, $\mumax = 0.01, 0.03, 0.05$.}
\label{fig:pdf_chi2}
\vspace{-1\baselineskip}
\end{figure}

\subsection{Dynamics of Diffusion with Adaptive Clustering}
Since both Type-I and Type-II errors decay exponentially with exponent proportional to $1/\mumax$, it is expected that incorrect clustering decisions will become rare as the iteration proceeds. We can therefore assume that enough iterations have elapsed and the first recursion \eqref{eqn:distributedadaptgroup}--\eqref{eqn:distributedcombinegroup} is operating in steady-state. Under these conditions, we can examine the dynamics of the second recursion \eqref{eqn:distributedadaptdynamic}--\eqref{eqn:distributedcombinedynamic} with adaptive clustering. 

From Assumption \ref{ass:topology}, correct clustering decisions split the underlying topology into $Q$ sub-networks one for each cluster. Within each cluster, correct clustering decisions merge all disjoint groups into a bigger group. Therefore, the resulting topology for the entire network will now consist of $Q$ separate sub-networks and each sub-network will be strongly-connected. In addition, since the step-sizes are sufficiently small, the decision statistics $\|\bm{w}_{\ell,i} - \bm{w}_{k,i}\|^2$ generated by the first recursion \eqref{eqn:distributedadaptgroup}--\eqref{eqn:distributedcombinegroup} in steady-state will be nearly time-invariant. The clustering decisions will therefore also be nearly time-invariant. Then, with high probability, the cooperative sub-neighborhoods $\{ \bm{\Ncal}_{k,i}^+ \}$ produced by \eqref{eqn:neighborhoodki+def} will become nearly time-invariant after the first recursion \eqref{eqn:distributedadaptgroup}--\eqref{eqn:distributedcombinegroup} reaches steady-state:
\be
\label{eqn:Neighborhoodconverge}
\bm{\Ncal}_{k,i}^+ \rightarrow \Ncal_k^+, \quad \mbox{as} \quad i \rightarrow \infty
\ee
for all $k$, where $\Ncal_k^+$ is from \eqref{eqn:Neighborhood+and-def}. 

In order to gain from enhanced cooperation via adaptive clustering, it is critical to choose proper combination policies for recursion \eqref{eqn:distributedadaptdynamic}--\eqref{eqn:distributedcombinedynamic}. From the discussion in Chapter 12 of \cite[p. 624-635]{Sayed14NOW}, we know that doubly-stochastic combination policies are able to exploit the benefit of cooperation when more agents are included in cooperation. For example, one can choose the Metropolis rule \cite[p. 664]{Sayed14NOW}, i.e., 
\be
\label{eqn:metropolisrule}
\bm{a}_{\ell k}'(i) = \left\{ 
\begin{aligned}
& \frac{1}{\max\{ |\bm{\Ncal}_{\ell,i}^+|, |\bm{\Ncal}_{k,i}^+| \}}, & \;\; & \ell \in \bm{\Ncal}_{k,i}^+ \backslash \{k\}  \\
& 1 - \sum_{n \in \bm{\Ncal}_{k,i}^+\backslash\{k\}} \bm{a}_{n k}'(i), & \;\; & \ell = k \\
& 0, & \;\; & \ell \in \Ncal_k \backslash \bm{\Ncal}_{k,i}^+  \\
\end{aligned}
\right.
\ee
When the combination coefficients $\{ \bm{a}_{\ell k}'(i) \}$ are chosen according to \eqref{eqn:metropolisrule}, their values are determined by the size of their cooperative sub-neighborhood $\bm{\Ncal}_{k,i}^+$. It is then obvious that coefficients $\{ \bm{a}_{\ell k}'(i) \}$ will tend to be constant values:
\be
\label{eqn:aellkconverge}
\bm{a}_{\ell k}'(i) \rightarrow a_{\ell k}', \quad \mbox{as} \quad i \rightarrow \infty
\ee
which will be determined by the size of $\Ncal_k^+$. Therefore, we can rewrite the second recursion \eqref{eqn:distributedadaptdynamic}--\eqref{eqn:distributedcombinedynamic} for small enough $\mumax$ and large enough $i$ as
\begin{subequations}
\begin{align}
\label{eqn:distributedadaptstable}
\bm{\psi}_{k,i}' & = \bm{w}_{k,i-1}' - \mu_k \wh{\nabla  J_k} (\bm{w}_{k,i-1}') \\
\label{eqn:distributedcombinestable}
\bm{w}_{k,i}' & = \sum_{\ell \in \Ncal_k^+ } a_{\ell k}' \bm{\psi}_{\ell,i}'
\end{align}
\end{subequations}
by using \eqref{eqn:Neighborhoodconverge} and \eqref{eqn:aellkconverge}. We collect the $\{ a_{\ell k}' \}$ into a matrix and denote it by $A'$. The matrix $A'$ is block diagonal and each block on its diagonal corresponds to a cluster. Recursion \eqref{eqn:distributedadaptstable}--\eqref{eqn:distributedcombinestable} only involves in-cluster cooperative learning for common minimizers, where all agents from a cluster form a single big group. Therefore, the performance analysis in Section \ref{sec:performancegroup} applies to this case as well.

\section{Simulation Results}

We first simulate a network consisting of $N = 200$ agents. Each agent observes a data stream $\{\bm{d}_k(i), \bm{u}_{k,i}; i \ge 0 \}$ that satisfies the linear regression model \cite{Sayed08}:
\be
\bm{d}_k(i) = \bm{u}_{k,i} w_k^o + \bm{v}_k(i)
\ee
where $\bm{d}_k(i) \in \mbbR$ is a scalar response variable and $\bm{u}_{k,i} \in \mbbR^{1\times M}$ is a row vector feature variable with $M = 2$. The feature variable $\bm{u}_{k,i}$ is randomly generated at every iteration by using a Gaussian distribution with zero mean and scaled identity covariance matrix $\sigma_{u,k}^2 I_M$. The model noise $\bm{v}_k(i)\in\mbbR$ is also randomly generated at every iteration by using another independent Gaussian distribution with zero mean and variance $\sigma_{v,k}^2$. The values of $\{\sigma_{u,k}^2\}$ and $\{\sigma_{v,k}^2\}$ are positive and randomly generated.

There are $Q = 2$ clusters in the network. The first $N_1 = 100$ agents belong to cluster $\Ccal_1$, i.e., $\Ccal_1 = \{1,2,\dots,100\}$. The second $N_2 = 100$ agents belong to cluster $\Ccal_2$, i.e., $\Ccal_2 = \{101,102,\dots,200\}$. The loading factors for the two clusters, namely, $w_1^\star$ and $w_2^\star$, are randomly generated. The step-size is uniform and is set to $\mu = 0.05$. The underlying topology that connects all agents is shown in Fig. \ref{fig:total_topology}. Agents from cluster $\Ccal_1$ are in red and agents from $\Ccal_2$ are in blue. We simulated the scenario where agents have some partial knowledge about the grouping at the beginning of the learning process. The partial knowledge is non-trivial, meaning that the groups $\{\Gcal_m\}$ used in the first recursion \eqref{eqn:distributedadaptgroup}--\eqref{eqn:distributedcombinegroup} are not just singletons. The topologies that reflect the $\{\Gcal_m\}$ are plotted in Figs. \ref{fig:initial_topology_group1} and \ref{fig:initial_topology_group2} for the two clusters. The Metropolis rule \eqref{eqn:metropolisrule} is used in both recursions, \eqref{eqn:distributedadaptgroup}--\eqref{eqn:distributedcombinegroup} and \eqref{eqn:distributedadaptdynamic}--\eqref{eqn:distributedcombinedynamic}.

\begin{figure}[h]
\centerline{
\subfloat[The initial topology with all links.]
{\includegraphics[width=2in]{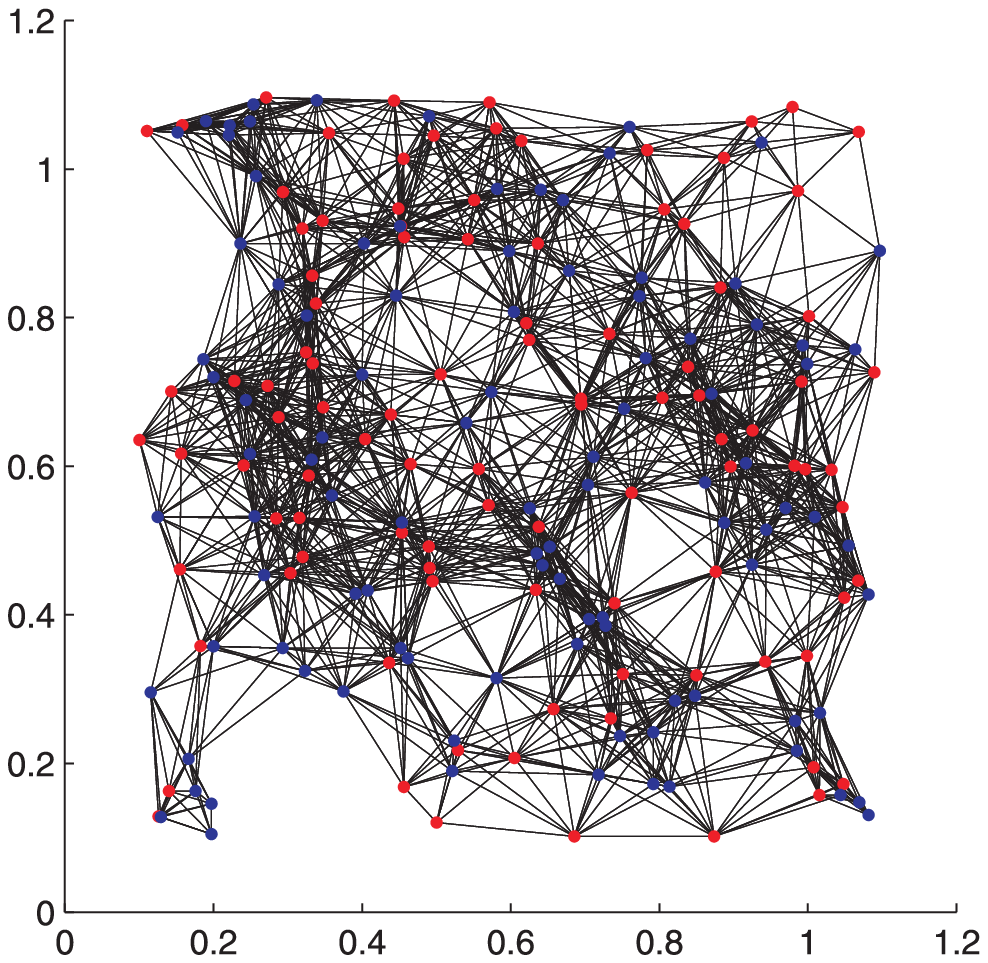}
\label{fig:total_topology}}
\hfil
\subfloat[Initial topology of cluster 1.]
{\includegraphics[width=2in]{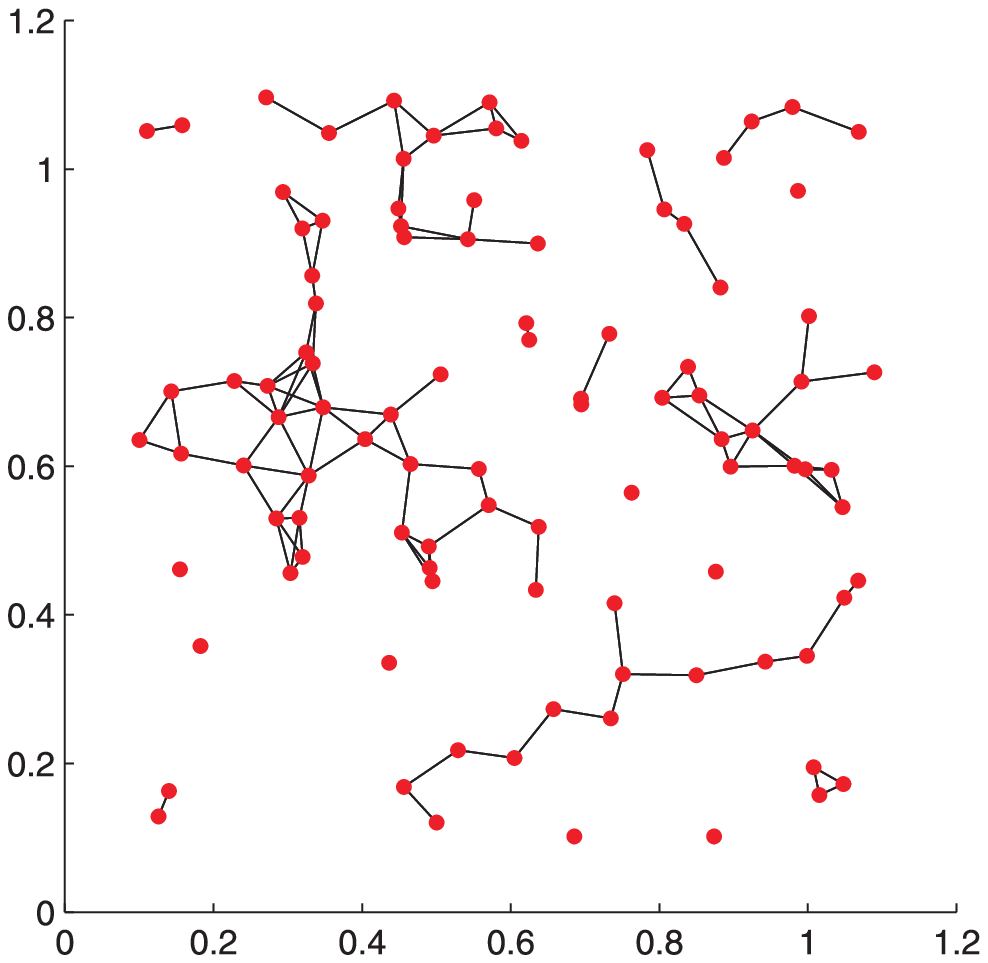}
\label{fig:initial_topology_group1}}
\hfil
\subfloat[Initial topology of cluster 2.]
{\includegraphics[width=2in]{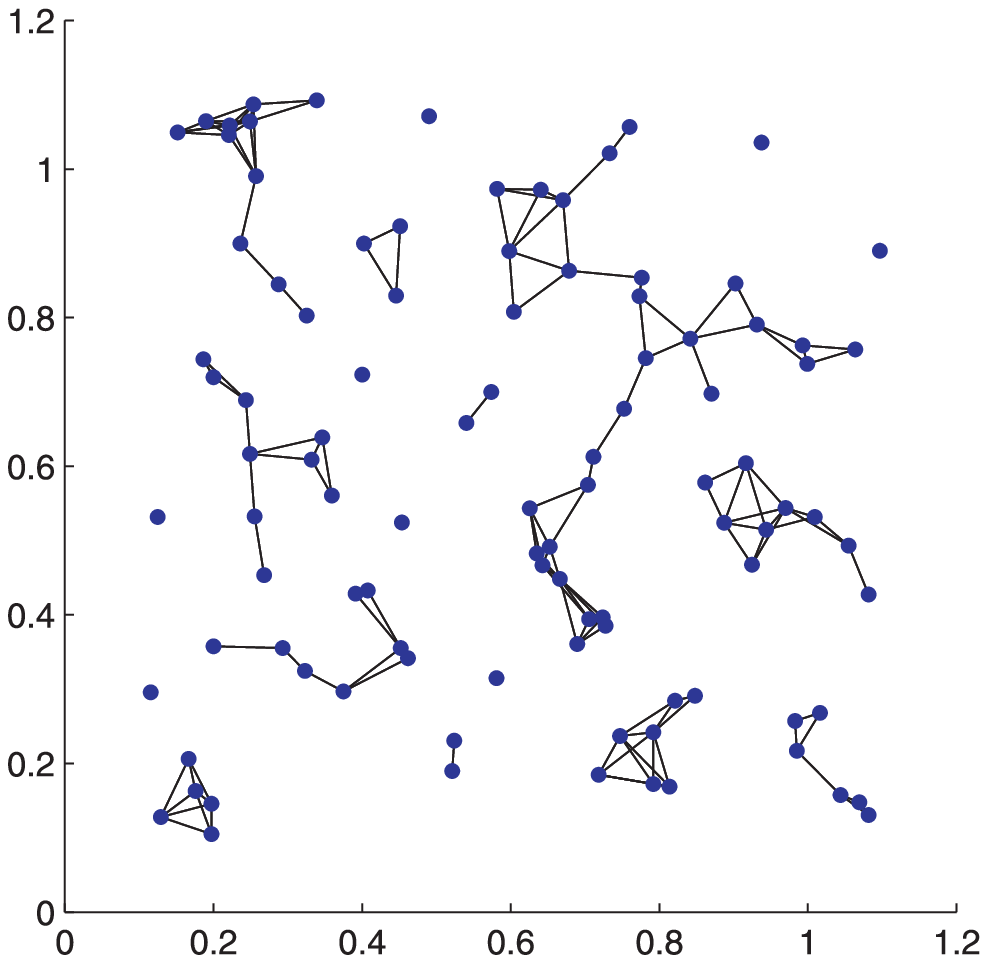}
\label{fig:initial_topology_group2}}}
\centerline{
\subfloat[The final topology at steady-state.]
{\includegraphics[width=2in]{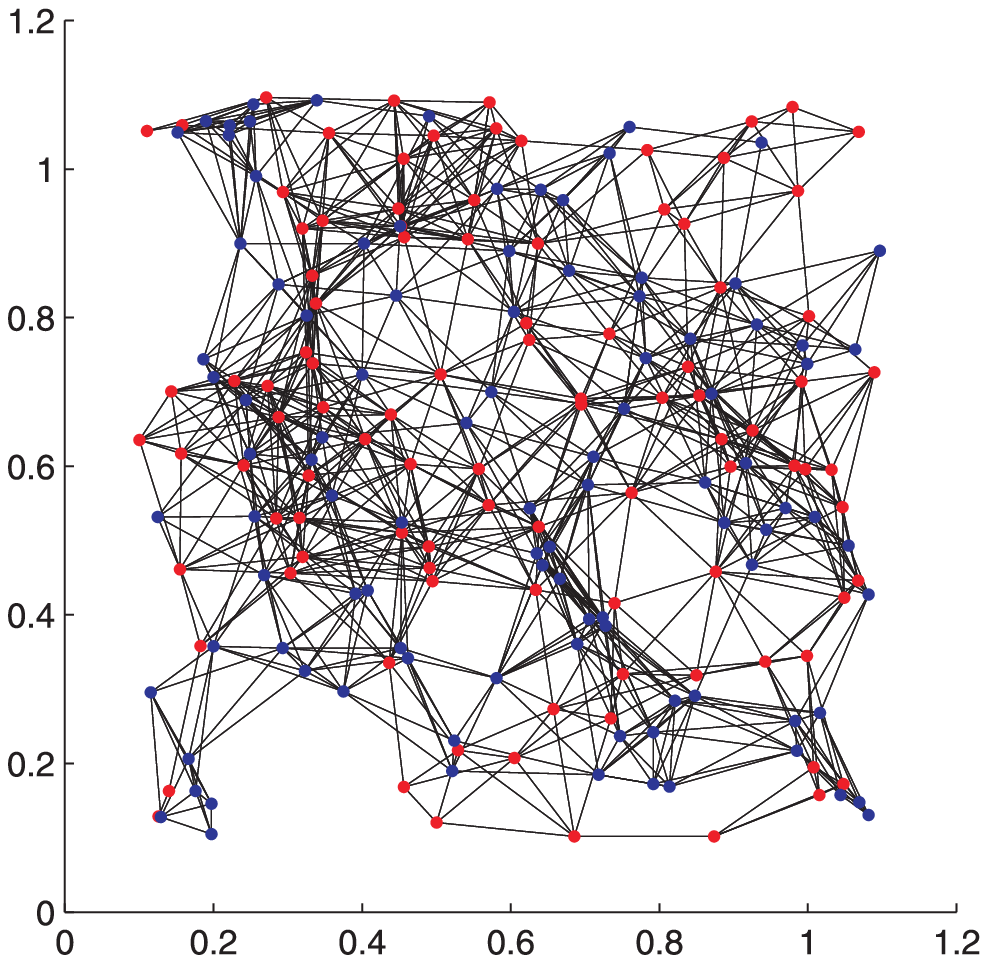}
\label{fig:split_topology}}
\hfil
\subfloat[Resulting topology of cluster 1.]
{\includegraphics[width=2in]{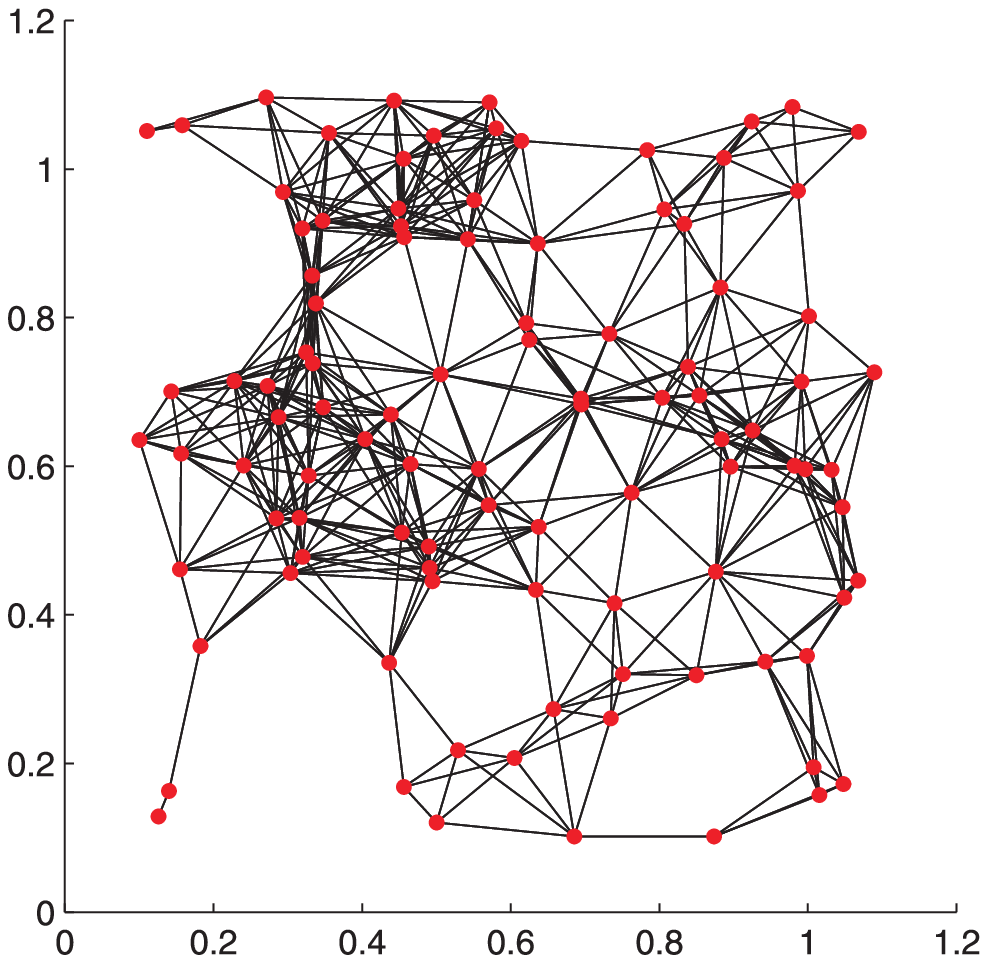}
\label{fig:resulting_topology_group1}}
\hfil
\subfloat[Resulting topology of cluster 2.]
{\includegraphics[width=2in]{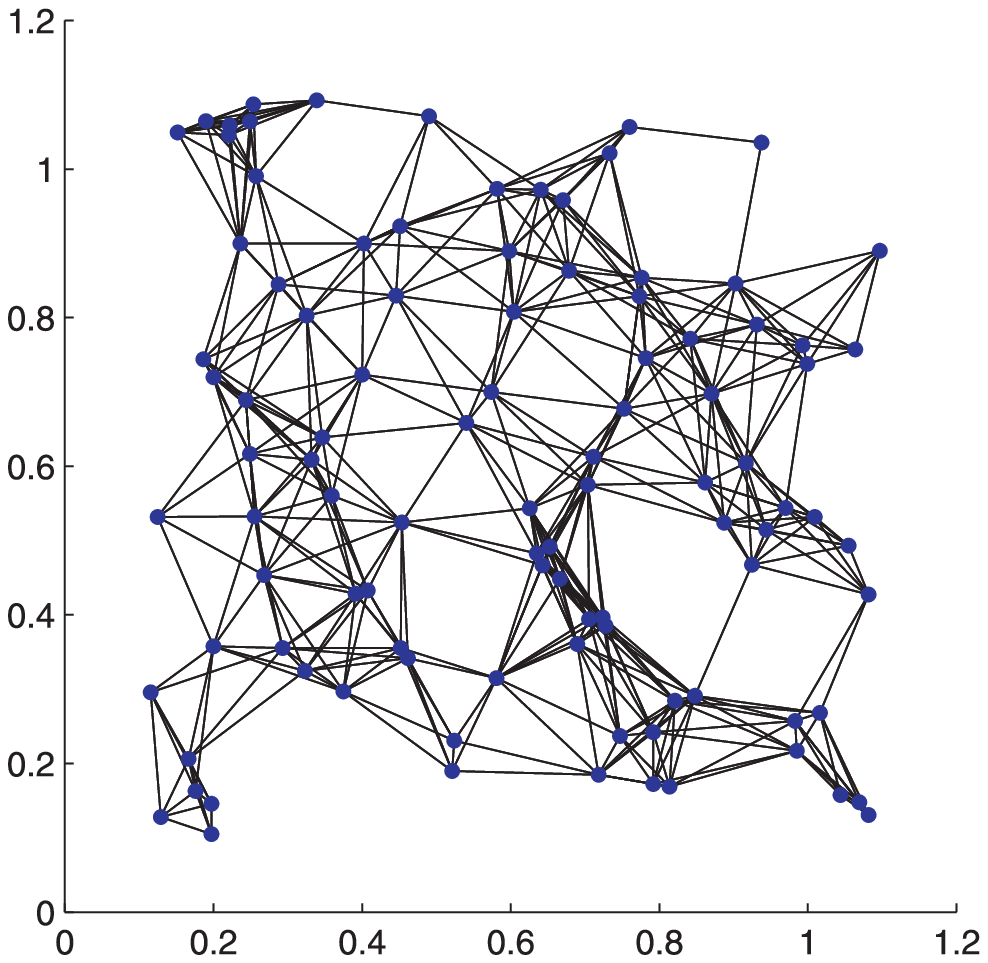}
\label{fig:resulting_topology_group2}}}
\caption{The underlying topology of the entire network where agents from different clusters are connected. As the learning process progresses, the disjoint groups in each cluster merge into a bigger group to enable collaborative learning among more agents. In steady-state, only in-cluster links remain active.}
\label{fig:cluster_topology}
\vspace{-1\baselineskip}
\end{figure}

As we explained before, in steady-state the clustering decisions become time-invariant and small groups in the same cluster merge into bigger groups. The links between neighbors within the same cluster are active while links to neighbors from different clusters are dropped. We plot the resulting topology in steady-state with active links in Fig. \ref{fig:split_topology}. Compared to Fig. \ref{fig:total_topology}, the underlying topology in Fig. \ref{fig:split_topology} is trimmed and split into two disjoint sub-networks. This result implies that the interference between two clusters is suppressed. The two sub-networks are themselves connected at steady-state and are shown in Figs \ref{fig:resulting_topology_group1} and \ref{fig:resulting_topology_group2}. Comparing the resulting cluster topologies in Figs \ref{fig:resulting_topology_group1} and \ref{fig:resulting_topology_group2} with the initial cluster topologies in Figs. \ref{fig:initial_topology_group1} and \ref{fig:initial_topology_group2}, it can be observed that all separate small groups from the same cluster merge into a bigger group and collaborative learning involving more agents emerges.

The MSD learning curves are plotted in Fig. \ref{fig:steadystateMSD} where the cluster MSDs are obtained by averaging over 100 trials. The cluster MSDs for the first recursion \eqref{eqn:distributedadaptgroup}--\eqref{eqn:distributedcombinegroup} are in black and green for clusters 1 and 2, respectively. The cluster MSDs for the second recursion \eqref{eqn:distributedadaptdynamic}--\eqref{eqn:distributedcombinedynamic} are in red and blue for clusters 1 and 2, respectively. Obviously both clusters improve their steady-state MSD performance on average by  forming larger clusters for cooperation.

\begin{figure}[h]
\centering
\includegraphics[width=4in]{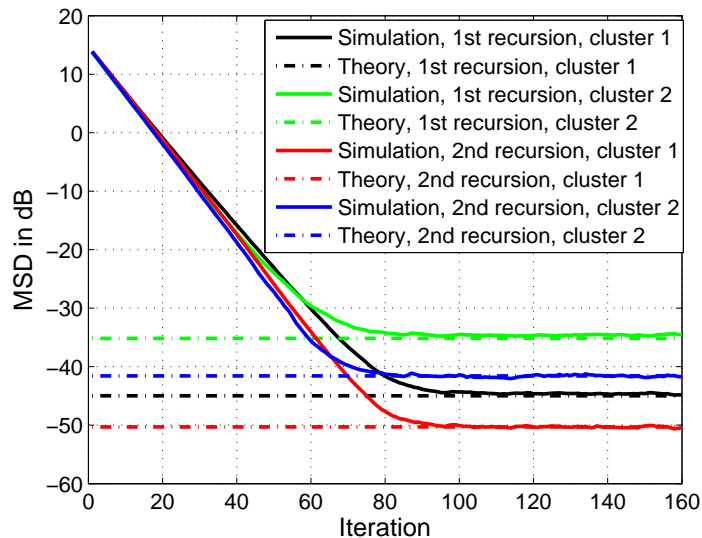}
\caption{The steady-state cluster average MSDs for the first recursion \eqref{eqn:distributedadaptgroup}--\eqref{eqn:distributedcombinegroup} and the second recursion \eqref{eqn:distributedadaptdynamic}--\eqref{eqn:distributedcombinedynamic}.}
\label{fig:steadystateMSD}
\vspace{-1\baselineskip}
\end{figure}

In the second simulation, we simulate a network with $N = 50$ nodes in $Q = 5$ clusters. The sizes of the five clusters are 8, 9, 10, 11, and 12, respectively. The initial topology is shown in Fig. \ref{fig:50init}. We choose the uniform step-size $\mu = 0.01$. After 1000 iterations, the resulting topology is separated into five clusters and is shown in Fig. \ref{fig:50res}, and the topologies for the five clusters are given in Figs. \ref{fig:50red}--\ref{fig:50magenta}, respectively. The MSD learning curves that are obtained by averaging over 500 trials match the theory well, as shown in Figs. \ref{fig:50MSD1} and \ref{fig:50MSD2}.

\section{Conclusions}

In this work we proposed a distributed strategy for  adaptive learning and clustering over multi-cluster networks. Detailed performance analysis is conducted and the results are supported by simulations. The proposed algorithm can be used in applications to segment heterogeneous networks into sub-networks to enhance in-cluster cooperation and suppress cross-cluster interference. It can also be applied to homogeneous networks to prevent intrusion or jamming by isolating malicious nodes from normal nodes. Furthermore, it can be used to trim and grow adaptive networks according to the objectives of the agents in the network.

\begin{figure}[h]
\centerline{
\subfloat[The initial topology with five clusters.]
{\includegraphics[width=3in]{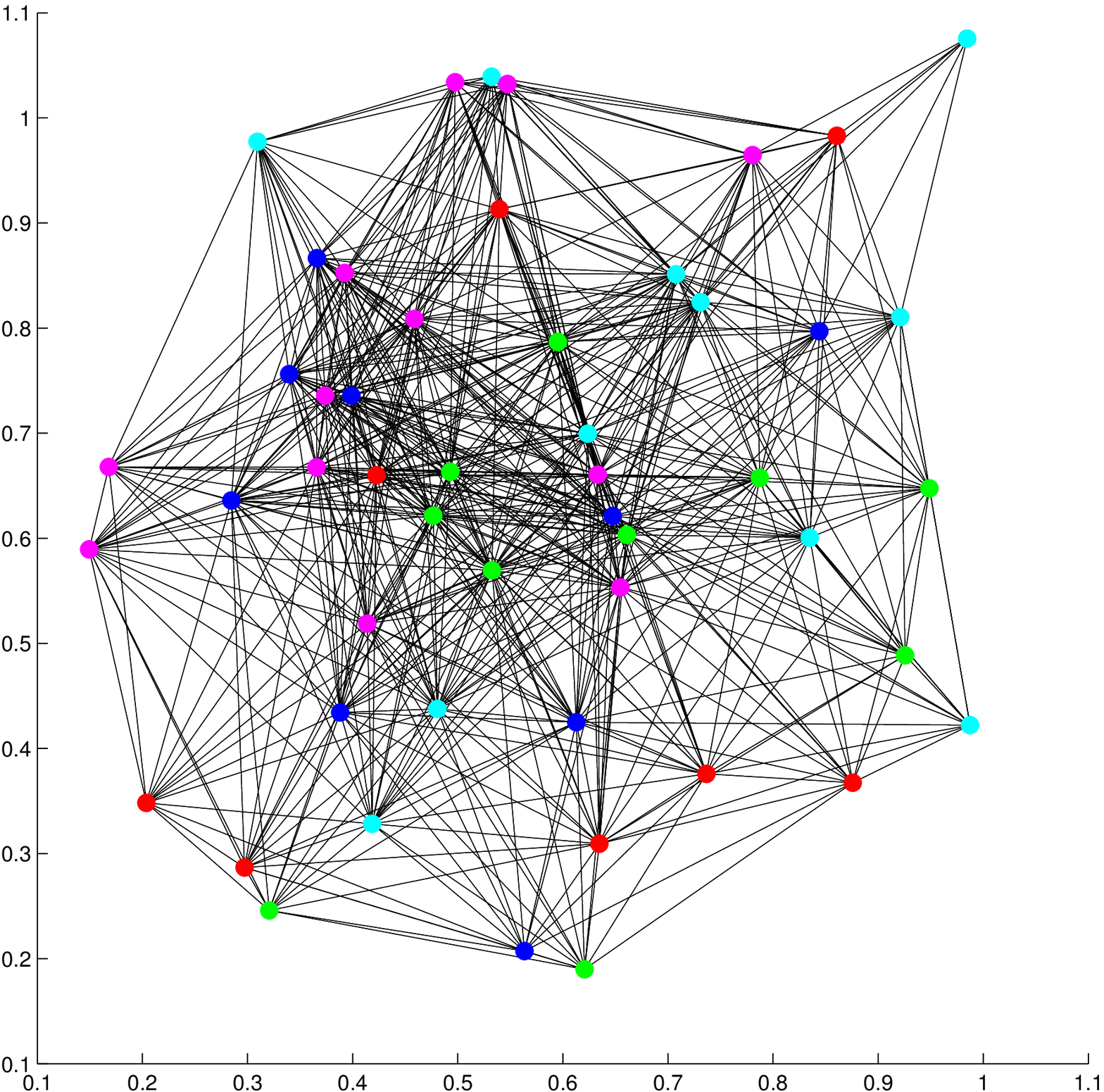}
\label{fig:50init}}
\hfil
\subfloat[The remaining topology with five clusters.]
{\includegraphics[width=3in]{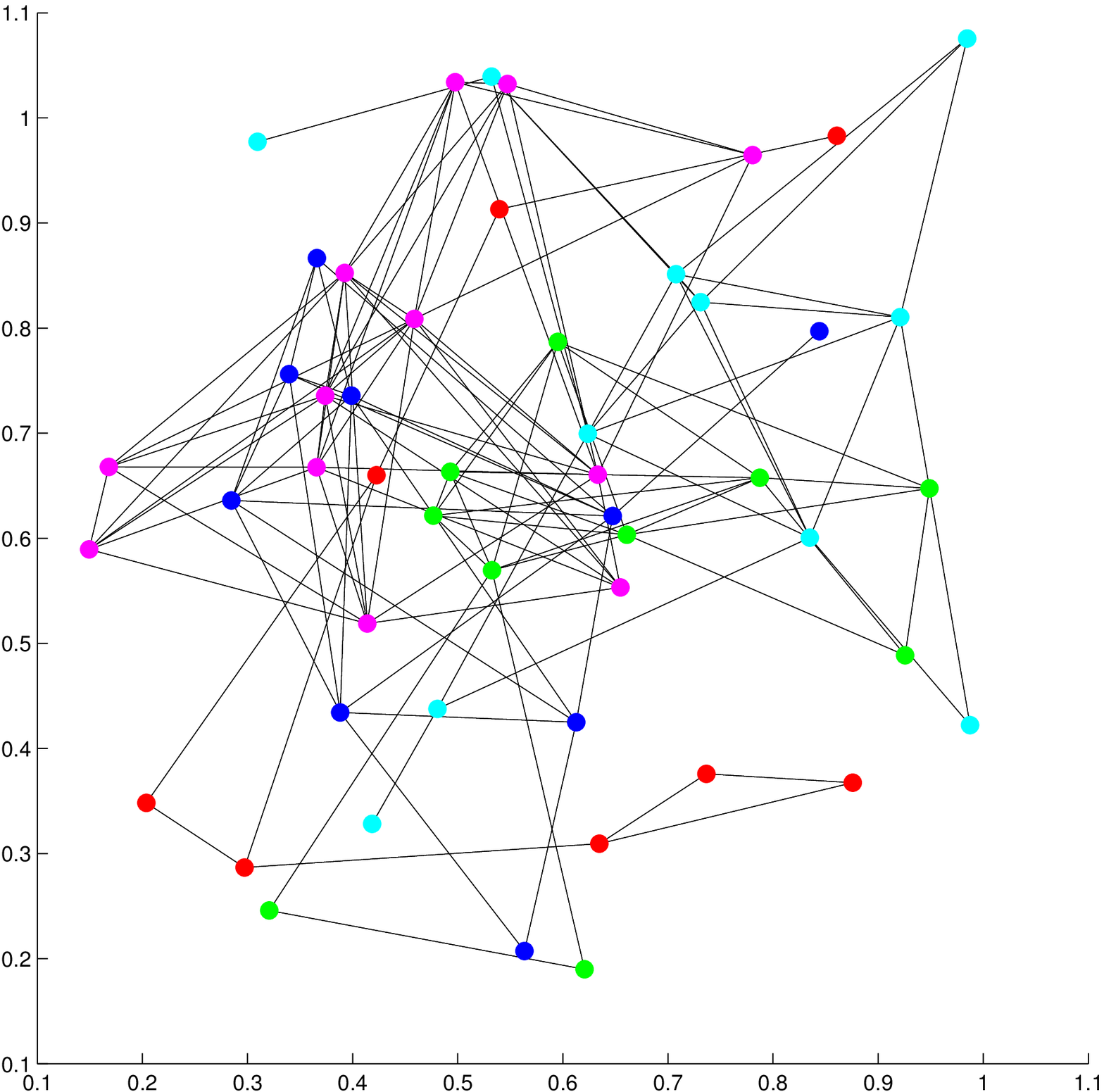}
\label{fig:50res}}}
\centerline{
\subfloat[Final topology of $\Ccal_1$.]
{\includegraphics[width=1.5in]{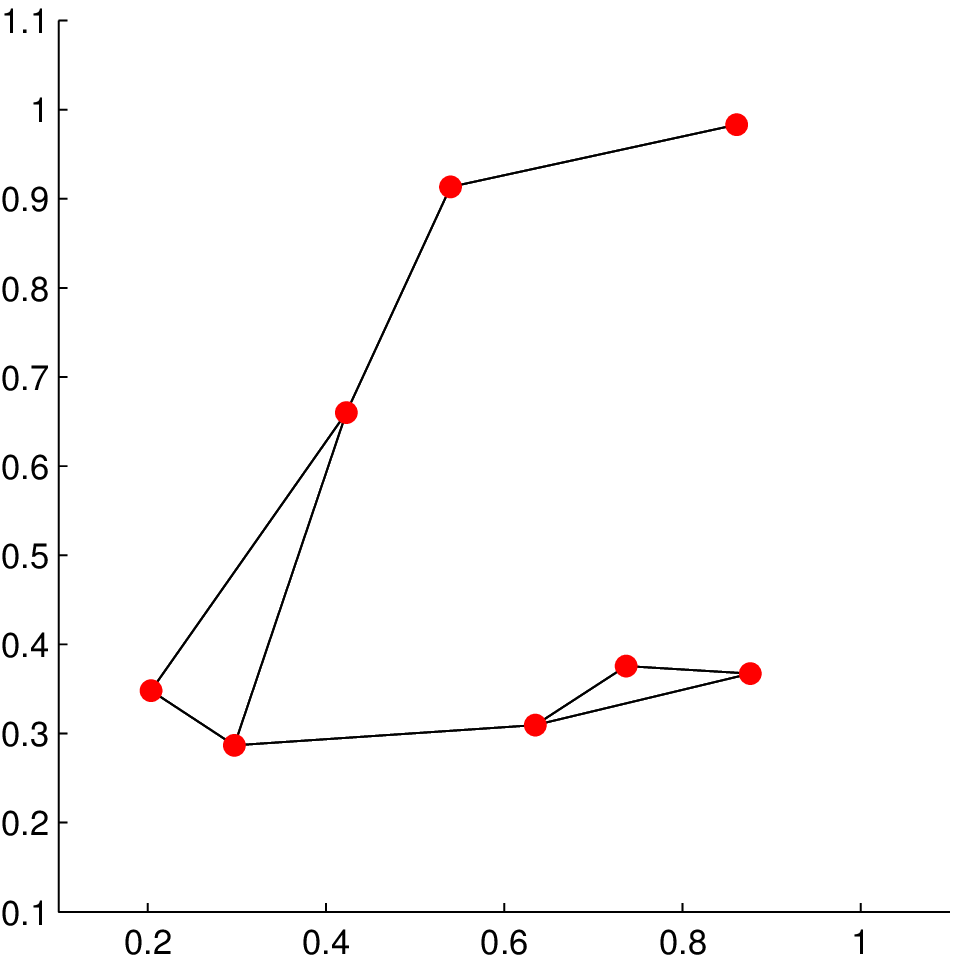}
\label{fig:50red}}
\hfil
\subfloat[Final topology of $\Ccal_2$.]
{\includegraphics[width=1.5in]{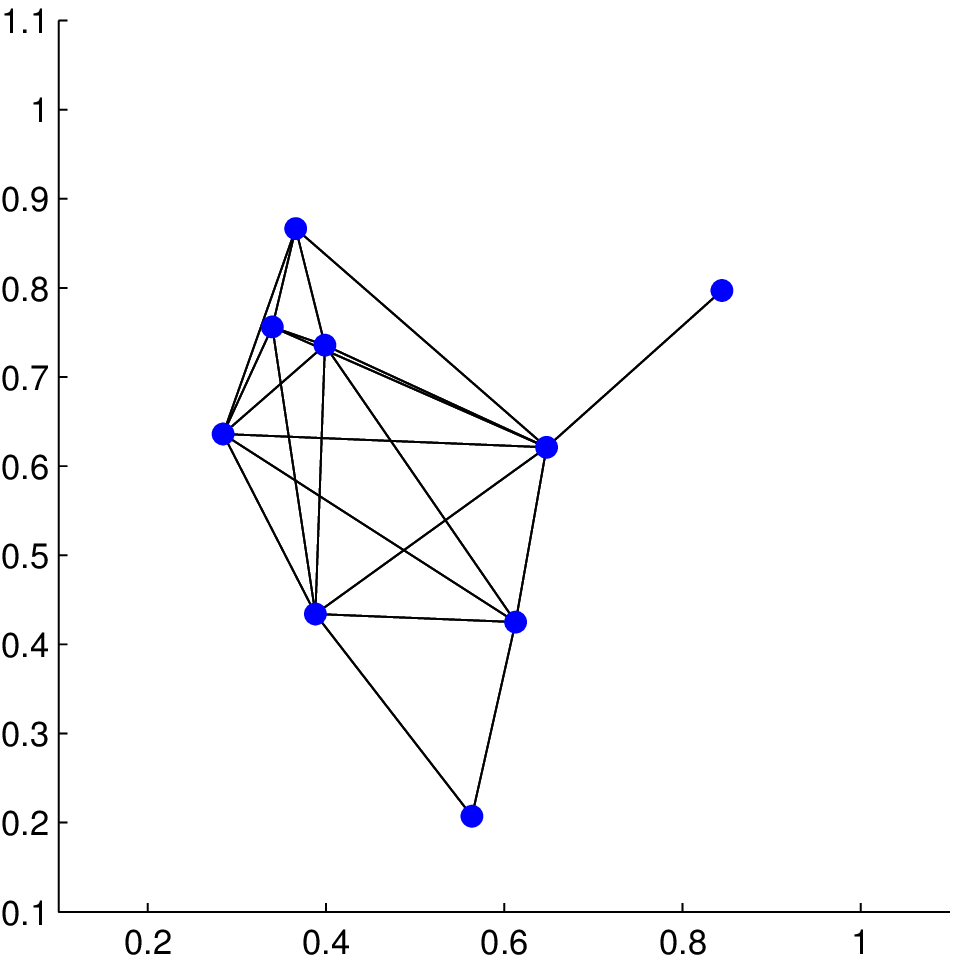}
\label{fig:50blue}}
\hfil
\subfloat[Final topology of $\Ccal_3$.]
{\includegraphics[width=1.5in]{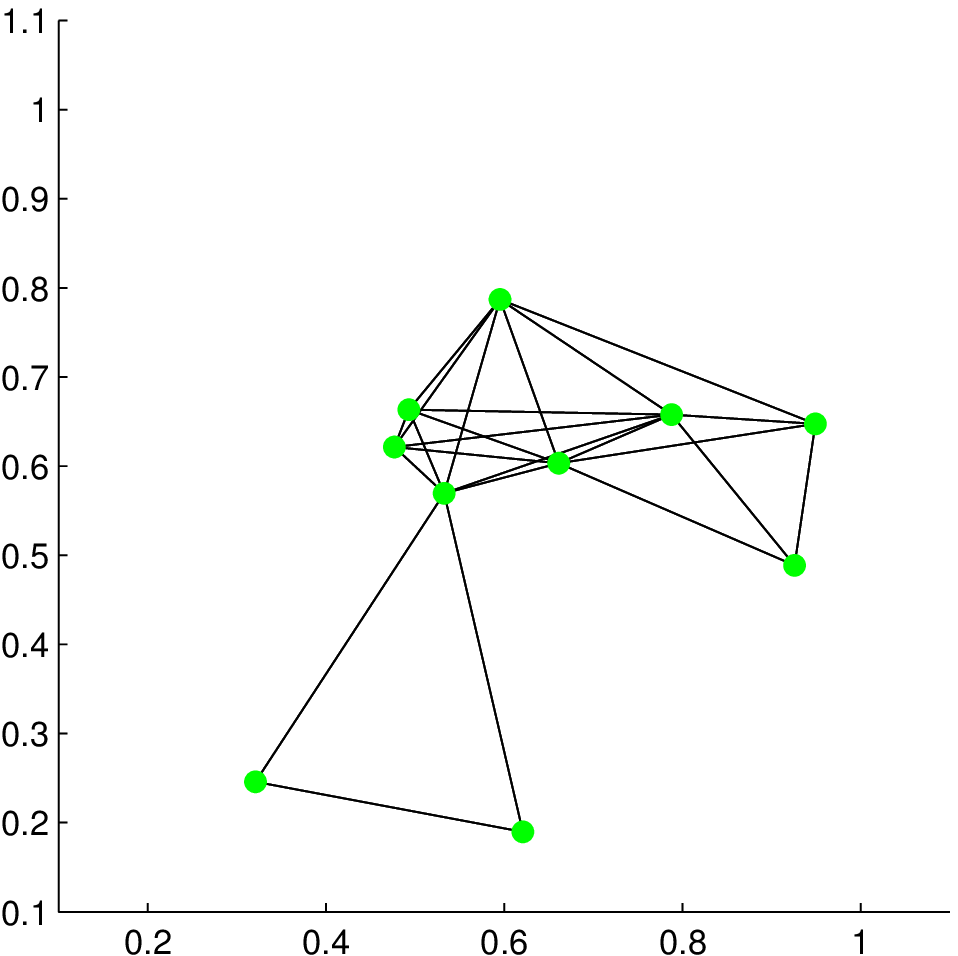}
\label{fig:50green}}}
\centerline{
\subfloat[Final topology of $\Ccal_4$.]
{\includegraphics[width=1.5in]{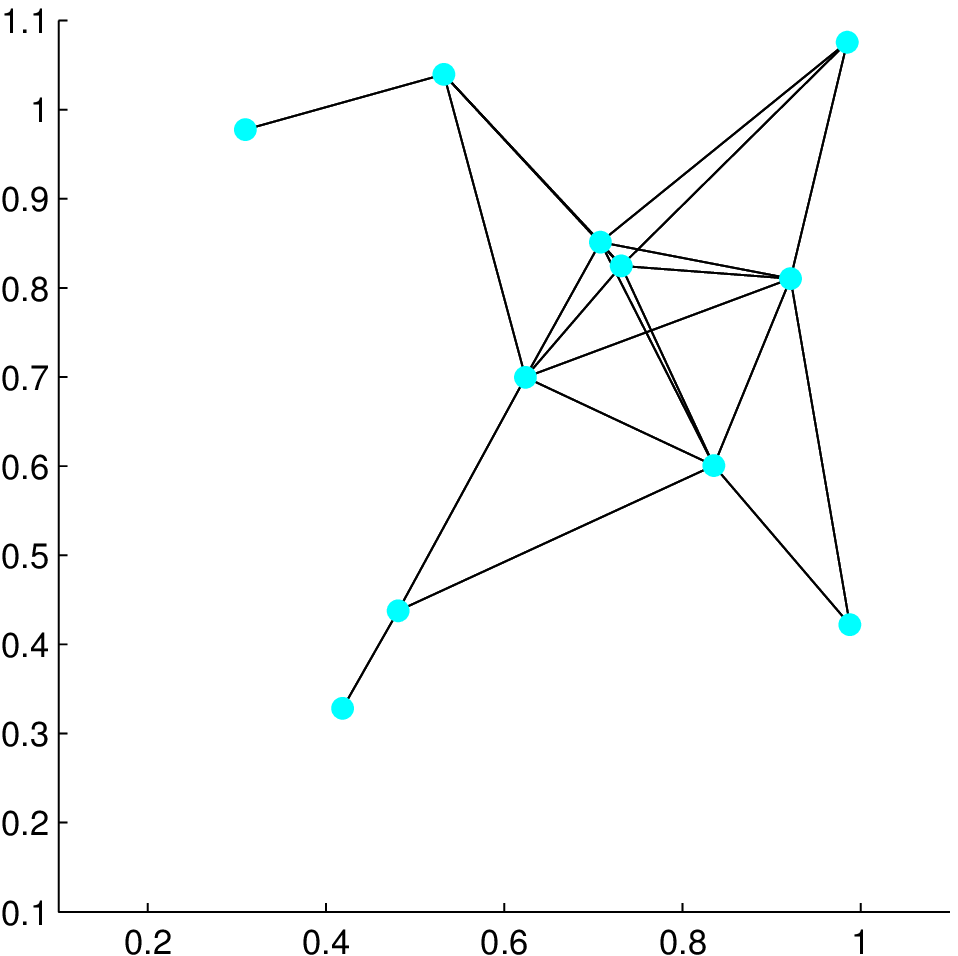}
\label{fig:50cyan}}
\hfil
\subfloat[Final topology of $\Ccal_5$.]
{\includegraphics[width=1.5in]{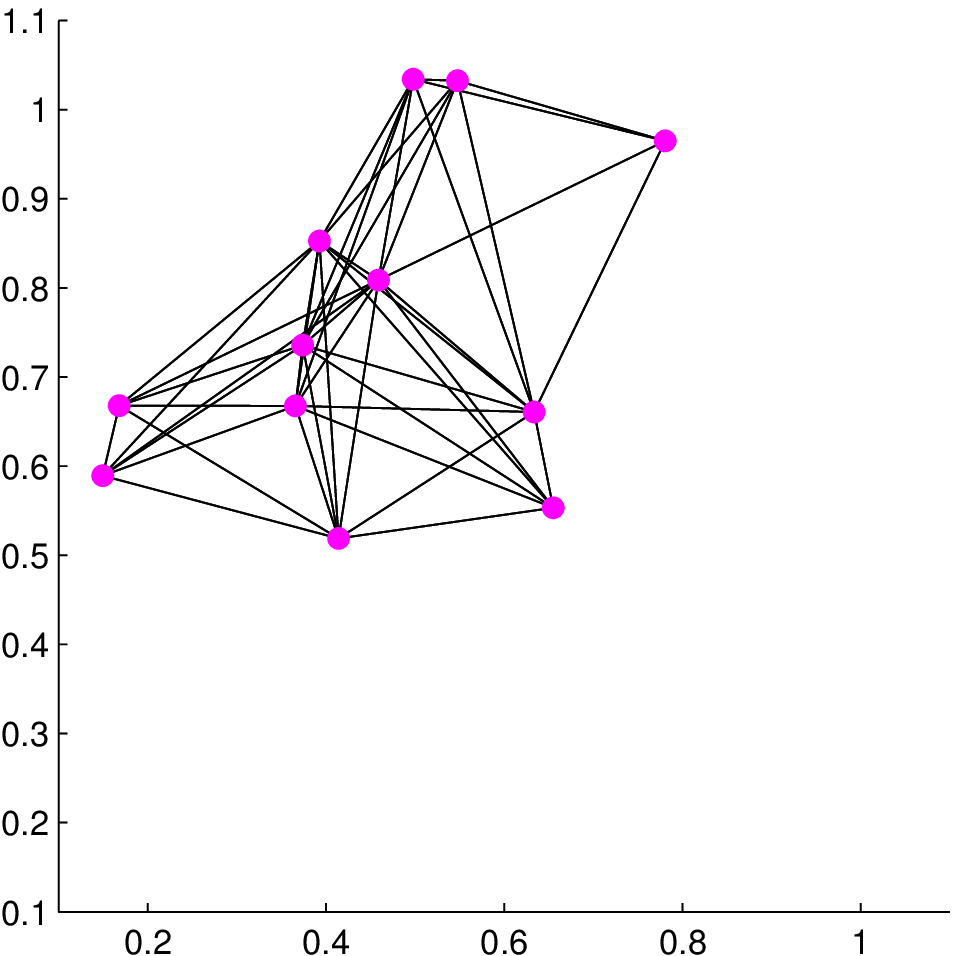}
\label{fig:50magenta}}
}
\caption{The initial topology with $N=50$ nodes and $Q=5$ clusters. In steady-state, the five clusters are successfully separated from each other while each cluster remains connected.}
\label{fig:secondsim}
\vspace{-1\baselineskip}
\end{figure}

\begin{figure}[h]
\centerline{
\subfloat[The MSD learning curves for the first recursion \eqref{eqn:distributedadaptgroup}--\eqref{eqn:distributedcombinegroup}.]
{\includegraphics[width=3.3in]{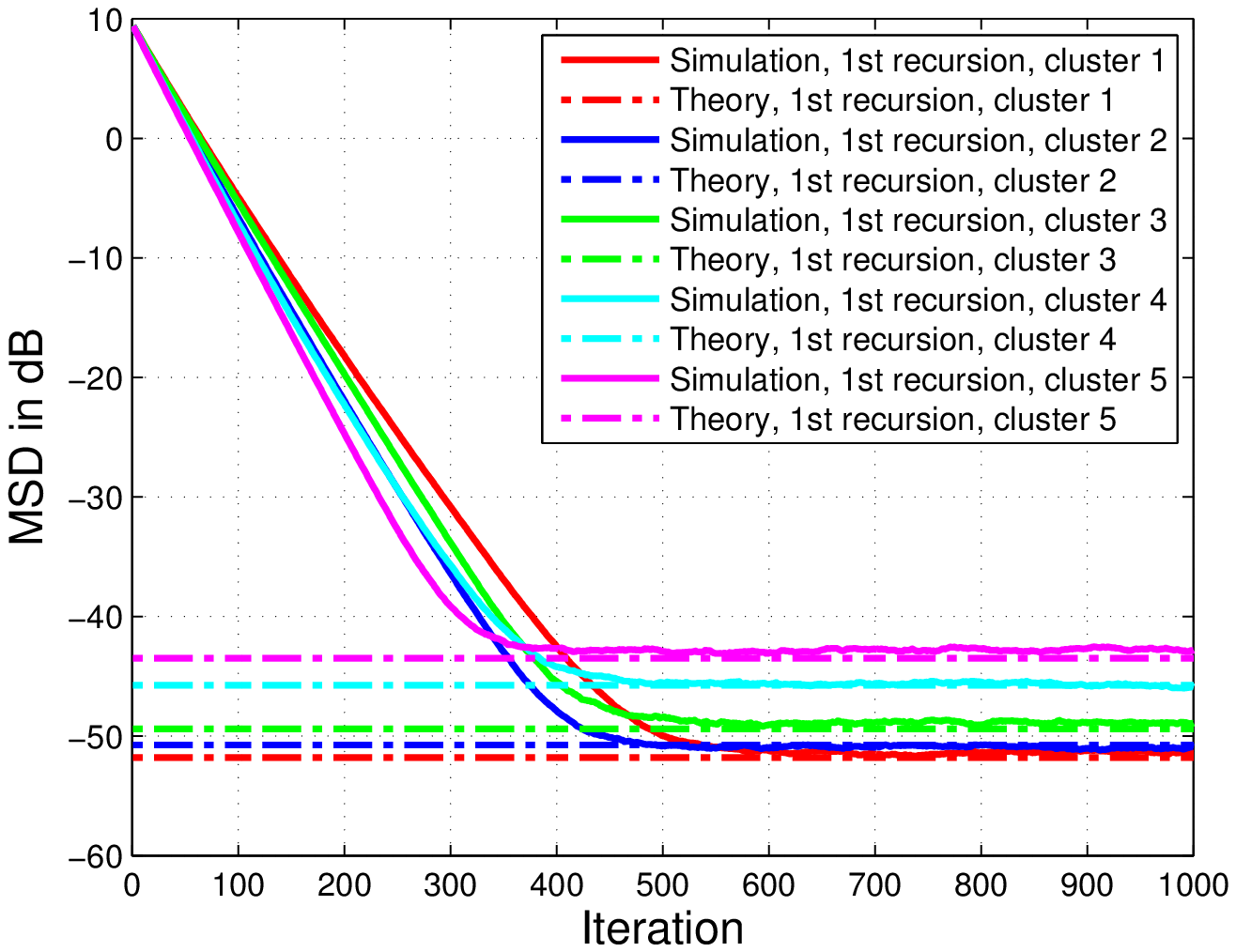}
\label{fig:50MSD1}}
\hfil
\subfloat[The MSD learning curves for the second recursion \eqref{eqn:distributedadaptdynamic}--\eqref{eqn:distributedcombinedynamic}.]
{\includegraphics[width=3.3in]{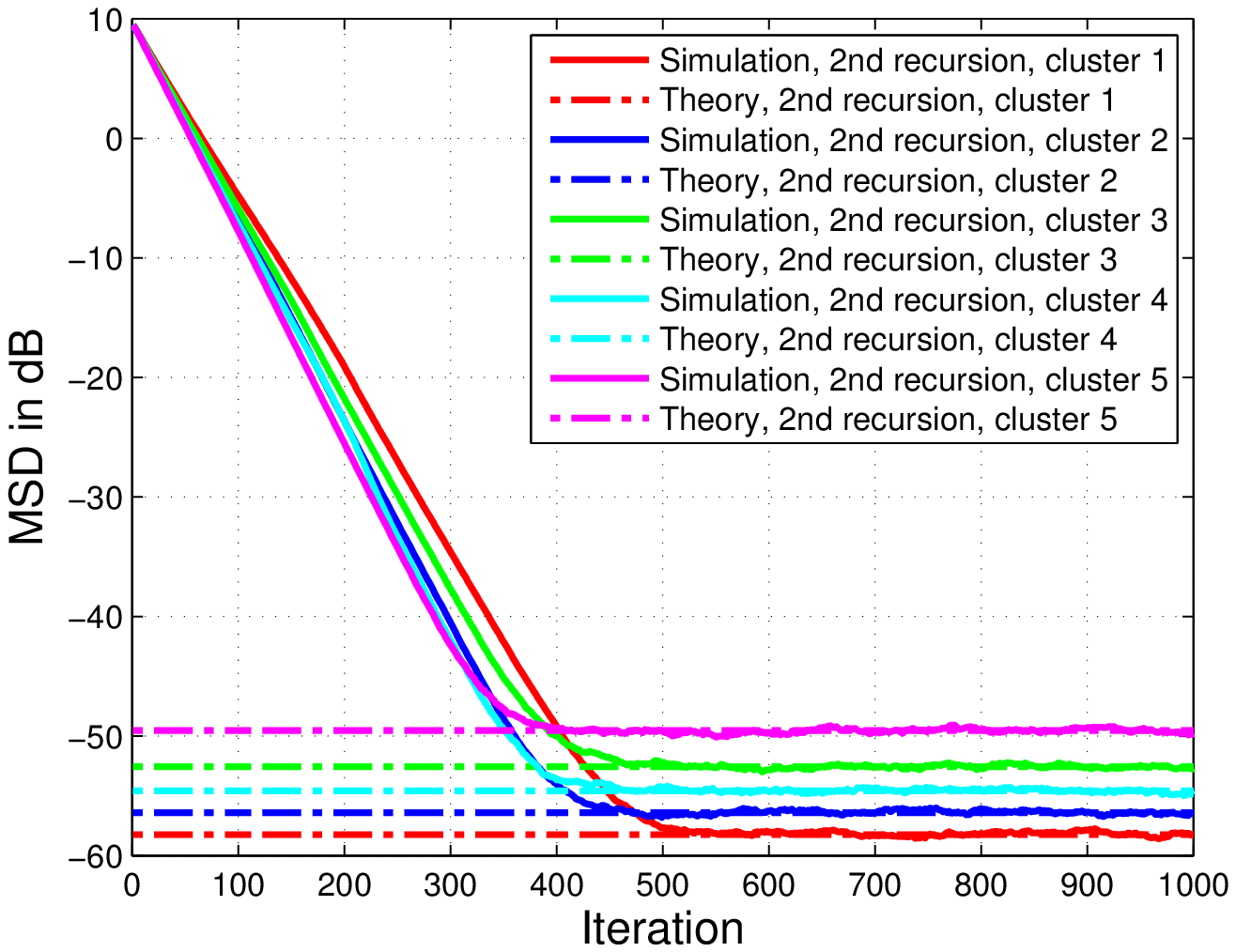}
\label{fig:50MSD2}}}
\caption{The MSD learning curves for the proposed distributed clustering and learning algorithm.}
\label{fig:secondsimMSD}
\vspace{-1\baselineskip}
\end{figure}

\appendices

\section{Proof of Lemma \ref{lemma:lowrankapprox}}
\label{app:lowrankapprox}
Since both models, \eqref{eqn:lowdimensionerrorrecursion} and \eqref{eqn:longtermerrorrecursion}, can be decoupled into $G$ separate recursions one for each group, it is sufficient to show that for sufficiently small step-sizes, and for any group $\Gcal_m$, it holds that
\be
\label{eqn:lowrankmodelgapgroup}
\limsup_{i\rightarrow \infty} \E \| \wt{\swb}_{m,i}^\longterm - \bar{\swb}_{m,i}^\lowdim \|^2 = O(\mumax^2)
\ee
where $\bar{\swb}_{m,i}^\lowdim$ is given by \eqref{eqn:zqidef}. We adopt a technique similar to the one used in the proof of Theorem 10.2 \cite[p. 557]{Sayed14NOW} to establish \eqref{eqn:lowrankmodelgapgroup} in the sequel. We introduce the Jordan decomposition of each $A_m$ \cite{Horn85, Sayed14NOW}:
\be
\label{eqn:Aeigendecompdef}
A_m = V_m J_m V_m^{-1} \defeq \begin{bmatrix}
p_m^g & V_{m,R}
\end{bmatrix} \begin{bmatrix}
1 & \\
& J_{m,\epsilon} \\
\end{bmatrix} \begin{bmatrix}
\one_{N_m^g} & V_{m,L}
\end{bmatrix}^\T
\ee
where $J_{m,\epsilon} \in \mbbC^{(N_m^g-1)\times(N_m^g-1)}$ consists of all stable Jordan blocks with $\epsilon$'s on the first lower off-diagonal, and $V_m$ is a non-singular complex matrix. Let
\begin{align}
\label{eqn:bigVmdef}
\Vcal_m & \defeq V_m \kron I_M \\
\label{eqn:bigJmdef}
\Jcal_m & \defeq J_m \kron I_M 
\end{align}
Multiplying $\Vcal_m^\T$ to both sides of \eqref{eqn:longtermerrorrecursiongroup} yields:
\be
\label{eqn:Umodifiederrorrecursiongroup}
\Vcal_m^\T \wt{\swb}_{m,i}^\longterm = \bar{\Bcal}_m \Vcal_m^\T \wt{\swb}_{m,i-1}^\longterm + \Jcal_m^\T \Vcal_m^\T \Mcal_m \ssb_{m,i}(\swb_{m,i-1})
\ee
where
\be
\label{eqn:barbigBmdef}
\bar{\Bcal}_m \defeq \Vcal_m^\T \Bcal_m (\Vcal_m^\T)^{-1} = \Jcal_m^\T - \Jcal_m^\T \Vcal_m^\T \Mcal_m \Hcal_m (\Vcal_m^\T)^{-1}
\ee
By \eqref{eqn:Aeigendecompdef} and \eqref{eqn:bigVmdef}, we have
\be
\label{eqn:wbarandwcheckdef}
\Vcal_m^\T \wt{\swb}_{m,i}^\longterm = \begin{bmatrix}
(p_m^g \kron I_M)^\T \wt{\swb}_{m,i}^\longterm  \\
(V_{m,R} \kron I_M)^\T \wt{\swb}_{m,i}^\longterm  \\
\end{bmatrix} \defeq \begin{bmatrix}
\bar{\bm{w}}_{m,i}^\longterm  \\
\check{\swb}_{m,i}^\longterm  \\
\end{bmatrix}
\ee
where $\bar{\bm{w}}_{m,i}^\longterm$ is an $M\times 1$ vector, $\check{\swb}_{m,i}^\longterm$ is an $(N_m^g-1)M \times1$ vector. It follows from \eqref{eqn:bigVmdef} and \eqref{eqn:zqidef} that
\be
\label{eqn:Uonewcent}
\Vcal_m^\T \bar{\swb}_{m,i}^\lowdim = (V_m^\T \one_{N_m^g} ) \kron \wt{\bm{w}}_{m,i}^\lowdim = 
\begin{bmatrix}
\wt{\bm{w}}_{m,i}^\lowdim  \\
0
\end{bmatrix}
\ee
since $\one_{N_m^g}$ is the first column of $(V_m^\T)^{-1}$ in \eqref{eqn:Aeigendecompdef}. Using \eqref{eqn:wbarandwcheckdef} and \eqref{eqn:Uonewcent}, we find that
\be
\E \| \wt{\swb}_{m,i}^\longterm - \bar{\swb}_{m,i}^\lowdim \|_{\Sigma_m}^2 = 
\E \| \bar{\bm{w}}_{m,i}^\longterm - \wt{\bm{w}}_{m,i}^\lowdim \|^2 + \E \| \check{\swb}_{m,i}^\longterm \|^2
\ee
where $\Sigma_m \defeq \Vcal_m \Vcal_m^\T$ is a positive-definite weighting matrix. Since $\| \Sigma_m \|$ is independent of $\mumax$, result \eqref{eqn:lowrankmodelgapgroup} holds if the following condition holds:
\be
\label{eqn:steadystateerrorapproxlowdimgroup}
\limsup_{i\rightarrow\infty} \E \| \bar{\bm{w}}_{m,i}^\longterm - \wt{\bm{w}}_{m,i}^\lowdim \|^2 + \E \| \check{\swb}_{m,i}^\longterm \|^2 = O(\mumax^2)
\ee
Using Eq. (10.78) in \cite[p. 563]{Sayed14NOW}, we know that
\be
\label{eqn:wcheckboundzero}
\limsup_{i\rightarrow\infty} \E \| \check{\swb}_{m,i}^\longterm \|^2 = O(\mumax^2)
\ee
From \eqref{eqn:Umodifiederrorrecursiongroup} and \eqref{eqn:wbarandwcheckdef}, the evolution of $\bar{\bm{w}}_{m,i}^\longterm$ is given by (see Eq. (9.61) from \cite[p. 514]{Sayed14NOW} for a similar derivation):
\be
\label{eqn:barwmigrouprecursion}
\bar{\bm{w}}_{m,i}^\longterm = D_m \bar{\bm{w}}_{m,i-1}^\longterm - D_{21}^\T \check{\swb}_{m,i-1}^\longterm + (p_m^g \kron I_M)^\T \Mcal_m \ssb_{m,i}(\swb_{m,i-1})
\ee
where $D_{21}^\T \defeq (p_m^g \kron I_M)^\T \Mcal_m \Hcal_m (V_{m,L} \kron I_M)$. Using \eqref{eqn:barwmigrouprecursion} and \eqref{eqn:lowdimensionerrorrecursiongroup}, we obtain
\be
\label{eqn:lowrankblock1recursion}
\bar{\bm{w}}_{m,i}^\longterm  - \wt{\bm{w}}_{m,i}^\lowdim = D_m (\bar{\bm{w}}_{m,i-1}^\longterm  - \wt{\bm{w}}_{m,i-1}^\lowdim ) - D_{21}^\T \check{\swb}_{m,i-1}^\longterm
\ee
We recognize that recursion \eqref{eqn:lowrankblock1recursion} has a form that is similar to the recursion for $\bar{\bm{b}}_i$ in Eq. (10.64) of \cite[p. 561]{Sayed14NOW} except that here in \eqref{eqn:lowrankblock1recursion} the driving noise term is absent. Therefore, we immediately get from Eq. (10.66) of \cite[p. 562]{Sayed14NOW} that
\be
\label{eqn:lowrankblock1recursion2}
\E \| \bar{\bm{w}}_{m,i}^\longterm - \wt{\bm{w}}_{m,i}^\lowdim \|^2 \le (1 - \sigma_{11} \mumax) \E \| \bar{\bm{w}}_{m,i-1}^\longterm - \wt{\bm{w}}_{m,i-1}^\lowdim \|^2 + \frac{\sigma_{21}^2 \mumax}{\sigma_{11}} \E \| \check{\swb}_{m,i-1}^\longterm \|^2
\ee
for some constants $\sigma_{11} > 0$ and $\sigma_{21} > 0$. Substituting \eqref{eqn:wcheckboundzero} into \eqref{eqn:lowrankblock1recursion2} yields
\be
\label{eqn:lowrankblock1recursion3}
\E \| \bar{\bm{w}}_{m,i}^\longterm - \wt{\bm{w}}_{m,i}^\lowdim \|^2 \le (1 - \sigma_{11} \mumax)  \E \| \bar{\bm{w}}_{m,i-1}^\longterm - \wt{\bm{w}}_{m,i-1}^\lowdim \|^2 + O(\mumax^3)
\ee
for large enough $i$. Therefore, it follows from \eqref{eqn:lowrankblock1recursion3} that
\be
\label{eqn:wbarboundzero}
\limsup_{i\rightarrow\infty} \E \| \bar{\bm{w}}_{m,i}^\longterm - \wt{\bm{w}}_{m,i}^\lowdim \|^2 = O(\mumax^2)
\ee
Combining \eqref{eqn:wcheckboundzero} and \eqref{eqn:wbarboundzero} proves \eqref{eqn:steadystateerrorapproxlowdimgroup}.

\section{Proof of Lemma \ref{lemma:lowrankerrorcov}}
\label{app:lowrankerrorcov}
Let us examine the evolution of the covariance matrix of $\wt{\swb}_i^\lowdim$, which is defined by
\be
\Theta_i \defeq \E [\wt{\swb}_i^\lowdim (\wt{\swb}_i^\lowdim)^\T ]
\ee
Using \eqref{eqn:bigRsidef} and \eqref{eqn:martingaledifference}, we get from \eqref{eqn:lowdimensionerrorrecursion} that
\be
\label{eqn:Phirecursiondef}
\Theta_i = \Dcal \Theta_{i-1} \Dcal + \Pcal^\T \Mcal [\E \Rcal_{s,i}(\swb_{i-1} )] \Mcal \Pcal
\ee
We next introduce the fixed-point covariance recursion
\be
\label{eqn:Phiorecursiondef}
\Theta_i^{\ss} = \Dcal \Theta_{i-1}^{\ss} \Dcal + \Pcal^\T \Mcal \Rcal_{s,i}( \sw^o ) \Mcal \Pcal
\ee
Let 
\be
\label{eqn:DeltaThetaandRsidef}
\Delta \Theta_i \defeq \Theta_i - \Theta_i^{\ss}, \;\;
\Delta \Rcal_{s,i} \defeq \E \Rcal_{s,i}( \swb_{i-1}) - \Rcal_{s,i}(\sw^o)
\ee
The difference matrix $\Delta \Theta_i$ evolves by the following recursion: 
\be
\label{eqn:deltaPhiirecursion}
\Delta \Theta_i = \Dcal \Delta \Theta_{i-1} \Dcal + \Pcal^\T \Mcal \Delta \Rcal_{s,i} \Mcal \Pcal
\ee
We bound the difference matrix $\Delta \Rcal_{s,i}$ by 
\begin{align}
\label{eqn:bounddeltaR}
\| \Delta \Rcal_{s,i} \| & \stackrel{(a)}{\le} \E \| \Rcal_{s,i}(\swb_{i-1}) - \Rcal_{s,i}(\sw^o) \| \nn \\
& \stackrel{(b)}{\le} \kappa_s \E \| \wt{\swb}_{i-1} \|^{\gamma_s} \nn \\
& \stackrel{(c)}{\le} \kappa_s \left( \E \| \wt{\swb}_{i-1} \|^4 \right)^{\gamma_s/4} 
\end{align}
where step (a) is by using Jensen's inequality; step (b) is by using \eqref{eqn:lipschitzcovariance} from Assumption \ref{ass:gradienterrors}; and step (c) is by applying Jensen's inequality again to the concave function $x^{\gamma_s/4}$ for $\gamma_s \le 4$ and $x\ge0$. As $i \rightarrow \infty$, we get from \eqref{eqn:bounddeltaR} that
\be
\label{eqn:bounddeltaR2}
\limsup_{i\rightarrow\infty} \| \Delta \Rcal_{s,i} \| = O(\mumax^{\gamma_s/2})
\ee
by using \eqref{eqn:4thorderasymptoticbound}. From Eq. (9.286) in \cite[p. 548]{Sayed14NOW}, we have
\be
\label{eqn:boundIQMmumaxH}
\| \Dcal \| = \max_m \| D_m \| \le 1 - \sigma \mumax
\ee
for some $\sigma > 0$. Using the triangle inequality and the sub-multiplicativity property of norms, we have from \eqref{eqn:deltaPhiirecursion} that
\begin{align}
\label{eqn:bounddiffphiiandphio}
\!\! \| \Delta \Theta_i \| & \le \| \Dcal \Delta \Theta_{i-1} \Dcal \| + \| \Pcal^\T \Mcal \Delta \Rcal_{s,i} \Mcal \Pcal \| \nn \\
\!\! & \le \| \Dcal \|^2 \| \Delta \Theta_{i-1} \| + \mumax^2 \| \Pcal \|^2 \| \Delta \Rcal_{s,i} \| \nn \\
\!\! & \le (1 - \sigma \mumax) \| \Delta \Theta_{i-1} \| + \mumax^2 \| \Pcal \|^2 \| \Delta \Rcal_{s,i} \| \!\!
\end{align}
where in the last step we used \eqref{eqn:boundIQMmumaxH} and the fact that $0< 1-\sigma \mumax < 1$. Then, as $i \rightarrow \infty$, we get from \eqref{eqn:bounddeltaR2} and \eqref{eqn:bounddiffphiiandphio} that
\be
\label{eqn:bounddiffphiiandphio2}
\limsup_{i\rightarrow\infty} \| \Delta \Theta_i \| \le \sigma^{-1} \mumax \| \Pcal \|^2 ( \limsup_{i\rightarrow\infty} \| \Delta \Rcal_{s,i} \| ) = O(\mumax^{1+ \gamma_s/2})
\ee
Now, since $\Dcal$ is stable and in view of \eqref{eqn:convergentcovariance}, the fixed-point recursion \eqref{eqn:Phiorecursiondef} converges as $i \rightarrow \infty$. At steady-state, the limit $\Theta_{\infty}^{\ss} \defeq \lim_{i \rightarrow \infty} \Theta_i^{\ss}$ of \eqref{eqn:Phiorecursiondef} satisfies the discrete Lyapunov equation \eqref{eqn:ThetaDTLEdef} by identifying $\Theta \equiv \Theta_{\infty}^{\ss}$.

\section{Proof of Theorem \ref{theorem:blockstructure}}
\label{app:block}
From Lemmas \ref{lemma:approxerrorrecursion} and \ref{lemma:lowrankapprox},
\begin{align}
\label{eqn:winetclosetozi2}
& \lim_{\mumax \rightarrow 0} \limsup_{i\rightarrow \infty} \mumax^{-1} \E \| \wt{\swb}_i - \bar{\swb}_i^\lowdim \|^2 \nn \\
& \le \lim_{\mumax \rightarrow 0} \limsup_{i\rightarrow \infty} \mumax^{-1} \E \| \wt{\swb}_i - \wt{\swb}_i^\longterm + \wt{\swb}_i^\longterm - \bar{\swb}_i^\lowdim \|^2 \nn \\
& \le \lim_{\mumax \rightarrow 0} \limsup_{i\rightarrow \infty} 2 \mumax^{-1} \E \| \wt{\swb}_i - \wt{\swb}_i^\longterm \|^2 + \lim_{\mumax \rightarrow 0} \limsup_{i\rightarrow \infty} 2 \mumax^{-1} \E \| \wt{\swb}_i^\longterm - \bar{\swb}_i^\lowdim \|^2 \nn \\
& = 0
\end{align}
Let 
\be
\label{eqn:Piilowdef}
\Pi_i^\lowdim \defeq \mumax^{-1} \E \bar{\swb}_i^\lowdim (\bar{\swb}_i^\lowdim)^\T
\ee
Then, by Jensen's inequality,
\begin{align}
\label{eqn:boundcovgap1}
\mumax \| \Pi_i - \Pi_i^\lowdim \| & \le \E \| \wt{\swb}_i \wt{\swb}_i^\T - \bar{\swb}_i^\lowdim (\bar{\swb}_i^\lowdim)^\T \| \nn \\
& = \E \| \wt{\swb}_i \wt{\swb}_i^\T - \bar{\swb}_i^\lowdim \wt{\swb}_i^\T + \bar{\swb}_i^\lowdim \wt{\swb}_i^\T - \bar{\swb}_i^\lowdim (\bar{\swb}_i^\lowdim)^\T \| \nn \\
& \le \E \| (\wt{\swb}_i - \bar{\swb}_i^\lowdim) \wt{\swb}_i^\T \| + \E \| \bar{\swb}_i^\lowdim (\wt{\swb}_i - \bar{\swb}_i^\lowdim)^\T \|
\end{align}
The second term on the RHS of \eqref{eqn:boundcovgap1} can be bounded by
\begin{align}
\label{eqn:boundcovgap2}
\E \| \bar{\swb}_i^\lowdim (\wt{\swb}_i - \bar{\swb}_i^\lowdim)^\T \| & = \E \| (\bar{\swb}_i^\lowdim - \wt{\swb}_i + \wt{\swb}_i ) (\wt{\swb}_i - \bar{\swb}_i^\lowdim)^\T \| \nn \\
& \le \E \| (\bar{\swb}_i^\lowdim - \wt{\swb}_i) (\wt{\swb}_i - \bar{\swb}_i^\lowdim)^\T \| + \E \| \wt{\swb}_i (\wt{\swb}_i - \bar{\swb}_i^\lowdim)^\T \| \nn \\
& = \E \| \bar{\swb}_i^\lowdim - \wt{\swb}_i \|^2 + \E \| \wt{\swb}_i (\wt{\swb}_i - \bar{\swb}_i^\lowdim)^\T \|
\end{align}
Substituting \eqref{eqn:boundcovgap2} into \eqref{eqn:boundcovgap1} yields:
\be
\label{eqn:boundcovgap3}
\mumax \| \Pi_i - \Pi_i^\lowdim \| \le 2 \E \| (\wt{\swb}_i - \bar{\swb}_i^\lowdim) \wt{\swb}_i^\T \| + \E \| \bar{\swb}_i^\lowdim - \wt{\swb}_i \|^2
\ee
The first term on the RHS of \eqref{eqn:boundcovgap3} can be bounded by
\begin{align}
\label{eqn:boundcovgap4}
\E \| (\wt{\swb}_i - \bar{\swb}_i^\lowdim) \wt{\swb}_i^\T \| & \le \E ( \| \wt{\swb}_i - \bar{\swb}_i^\lowdim \| \| \wt{\swb}_i \| ) \nn \\
& \le \sqrt{ \E \| \wt{\swb}_i - \bar{\swb}_i^\lowdim \|^2 \E \| \wt{\swb}_i \|^2 }
\end{align}
by using the Cauchy-Schwarz inequality. Substituting \eqref{eqn:boundcovgap4} into \eqref{eqn:boundcovgap3} yields:
\be
\label{eqn:boundcovgap5}
\| \Pi_i - \Pi_i^\lowdim \| \le 2 \sqrt{ \mumax^{-1} \E \| \wt{\swb}_i - \bar{\swb}_i^\lowdim \|^2 } \cdot \sqrt{ \mumax^{-1} \E \| \wt{\swb}_i \|^2 } + \mumax^{-1} \E \| \bar{\swb}_i^\lowdim - \wt{\swb}_i \|^2
\ee
Using \eqref{eqn:winetclosetozi2} and Theorem \ref{theorem:stability}, it follows from \eqref{eqn:boundcovgap5} that
\be
\label{eqn:boundcovgap6}
\lim_{\mumax \rightarrow 0} \limsup_{i \rightarrow \infty} \| \Pi_i - \Pi_i^\lowdim \| = 0
\ee
Noting that $\bar{\swb}_i^\lowdim$ is obtained by extending $\wt{\swb}_i^\lowdim$ via \eqref{eqn:zidef} and \eqref{eqn:zqidef}, we have
\be
\label{eqn:boundcovgap7}
\E \bar{\swb}_{m,i}^\lowdim (\bar{\swb}_{n,i}^\lowdim)^\T = ( \one_{N_m^g} \one_{N_n^g}^\T ) \kron \E \wt{\bm{w}}_{m,i}^\lowdim (\wt{\bm{w}}_{n,i}^\lowdim)^\T
\ee
for any $m$ and $n$. From \eqref{eqn:PiinftyequalPhi}, we know that
\be
\label{eqn:boundcovgap8}
\lim_{\mumax \rightarrow 0} \limsup_{i\rightarrow \infty} \| \mumax^{-1} \E \wt{\bm{w}}_{m,i}^\lowdim (\wt{\bm{w}}_{n,i}^\lowdim)^\T - \Phi_{m,n} \| = 0
\ee
where $\Phi_{m,n}$ denotes the $(m,n)$-th block of $\Phi$ with block size $M \times M$. It follows from \eqref{eqn:boundcovgap7} and \eqref{eqn:boundcovgap8} that
\be
\label{eqn:boundcovgap9}
\lim_{\mumax \rightarrow 0} \! \limsup_{i\rightarrow \infty} \! \| \mumax^{-1} \E \bar{\swb}_{m,i}^\lowdim (\bar{\swb}_{n,i}^\lowdim)^\T \!-\! ( \one_{N_m^g} \! \one_{N_n^g}^\T ) \kron \Phi_{m,n} \| \!=\! 0
\ee
Using \eqref{eqn:zidef}, \eqref{eqn:Pinetworkdef}, and \eqref{eqn:Piilowdef}, we get from \eqref{eqn:boundcovgap9} that
\be
\label{eqn:boundcovgap10}
\lim_{\mumax \rightarrow 0} \limsup_{i \rightarrow \infty} \| \Pi_i^\lowdim - \Pi \| = 0
\ee
Combining \eqref{eqn:boundcovgap6} and \eqref{eqn:boundcovgap10}, we arrive at \eqref{eqn:Piinftyoblockstructure}.

\section{Proof of Lemma \ref{lemma:normallowdimensional}}
\label{app:normal}

We establish this result by calling upon Theorem 1.1 from \cite[p. 319]{Kushner03}, which considers a stochastic recursion of the following form:
\be
\label{eqn:SGDdef}
\bm{x}_i = \bm{x}_{i-1} + \mu g(\bm{x}_{i-1}) + \mu \bm{v}_i
\ee
with step-size $\mu > 0$, update vector $g(\bm{x}_{i-1})$, and noise $\bm{v}_i$, satisfying the conditions:
\begin{enumerate}
\item The function $g(\cdot)$ is continuously differentiable and can be expanded as
\be
g(x) = g(x^o) + [\nabla g(x^o)]^\T (x - x^o) + o(\| x - x^o \|)
\ee
around a point $x^o$, where $\nabla g(\cdot)$ denotes the Jacobian of $g(\cdot)$, and $o(\cdot)$ is the ``small-$o$'' notation that represents higher order terms.

\item It holds that $x^o$ is the unique point that satisfies:
\be
\label{eqn:gradientbezero}
g(x^o) = 0
\ee

\item The Jacobian $A \defeq \nabla g(x^o)$ is a Hurwitz matrix (i.e., the real parts of the eigenvalues of $A$ are negative).

\item The noise process $\{\bm{v}_i; i\ge0\}$ is a martingale difference, i.e.,
\be
\label{eqn:martingale}
\E (\bm{v}_i | \F_{i-1} ) = 0
\ee
where $\F_{i-1}$ is the filtration defined by $\{ \bm{x}_i; i\ge 0\}$.

\item The noise $\bm{v}_i$ has an asymptotically bounded moment of order higher than 2, namely,
\be
\label{eqn:boundedvar}
\lim_{\mu \rightarrow 0} \limsup_{i \rightarrow \infty} \E \| \bm{v}_i \|^{2+p} < \infty
\ee
for some $p>0$.

\item The covariance matrices of the noise process $\{ \bm{v}_i; i\ge 0 \}$ converge to a positive semi-definite matrix $\Sigma \ge 0$:
\be
\lim_{\mu \rightarrow 0} \limsup_{i \rightarrow \infty} \| \E \bm{v}_i \bm{v}_i^\T - \Sigma \| = 0 
\ee
\end{enumerate}
Under these conditions, it holds that as $i\rightarrow\infty$ and $\mu \rightarrow 0$ asymptotically, the sequence $\{ \bm{x}_i/\sqrt{\mu} \}$ converges weakly to a Gaussian random distribution with mean $x^o$ and covariance matrix $C$, which is the unique solution to the continuous Lyapunov equation $A C + C A^\T = \Sigma$.

These conditions are satisfied by our recursion \eqref{eqn:lowdimensionerrorrecursionnew} by identifying $\wt{\swb}_i^\lowdim \equiv \bm{x}_i$, $\mumax \equiv \mu$, $-\bar{\Hcal} \wt{\swb}_{i-1}^\lowdim \equiv g(\bm{x}_{i-1})$, $\bm{v}_i \equiv \bar{\bm{s}}_i$. First, since $\bar{\Hcal}$ is positive-definite by \eqref{eqn:barbigHdef} and \eqref{eqn:barHmdef}, it is obvious that $x^o = 0$ is the unique point satisfying \eqref{eqn:gradientbezero}. Second, since $g(x) = -\bar{\Hcal} x$ and $x^o = 0$, condition 1) holds automatically with $[\nabla g(x^o)]^\T = -\bar{\Hcal}$. Third, it is easy to recognize that $A \equiv -\bar{\Hcal}$ is Hurwitz since $\bar{\Hcal}$ is positive-definite. Fourth, by \eqref{eqn:martingaledifference} from Assumption \ref{ass:gradienterrors}, condition \eqref{eqn:martingale} holds. Fifth, by \eqref{eqn:bounded4thorder} from Assumption \ref{ass:gradienterrors}, we have
\begin{align}
\label{eqn:boundbarsi4thorder}
\E \| \bar{\bm{s}}_i \|^4 & \le \| \Pcal \|^4 \E \| \ssb_i( \swb_{i-1} ) \|^4 \nn \\
& \le \| \Pcal \|^4 ( \alpha^2 \E \| \wt{\swb}_{i-1} \|^4 + \sigma_s^4 )
\end{align}
Using Theorem \ref{theorem:stability}, we get from \eqref{eqn:boundbarsi4thorder} that
\be
\lim_{\mumax \rightarrow 0} \limsup_{i \rightarrow \infty} \E \| \bar{\bm{s}}_i \|^4 \le \| \Pcal \|^4 ( O(\mumax^2) + \sigma_s^4 ) < \infty
\ee
which satisfies condition \eqref{eqn:boundedvar}. Sixth, we have from \eqref{eqn:barsidef} and \eqref{eqn:bigRsidef} that
\be
\label{eqn:covbarsi}
\E \bar{\bm{s}}_i \bar{\bm{s}}_i^\T = \mumax^{-2} \Pcal^\T \Mcal \E \Rcal_{s,i}( \swb_{i-1} ) \Mcal \Pcal
\ee
Let
\be
\Sigma_i \defeq \mumax^{-2} \Pcal^\T \Mcal \Rcal_{s,i}(\sw^o) \Mcal \Pcal
\ee
Then, using Jensen's inequality and \eqref{eqn:lipschitzcovariance} from Assumption \ref{ass:gradienterrors}, we have from \eqref{eqn:covbarsi} that
\be
\label{eqn:EbarsicovSigmaigap}
\| \E \bar{\bm{s}}_i \bar{\bm{s}}_i^\T - \Sigma_i \| \le \| \Pcal \|^2 \| \Delta \Rcal_{s,i} \|
\ee
where $\Delta \Rcal_{s,i}$ is from \eqref{eqn:DeltaThetaandRsidef}. Using \eqref{eqn:bounddeltaR2},
we further get
\be
\label{eqn:EbarsicovSigmaigap2}
\lim_{\mumax \rightarrow 0} \limsup_{i \rightarrow \infty} \| \E \bar{\bm{s}}_i \bar{\bm{s}}_i^\T - \Sigma_i \| = 0
\ee
Using \eqref{eqn:convergentcovariance}, we have
\be
\label{eqn:EbarsicovSigmaigap3}
\lim_{i\rightarrow\infty} \Sigma_i = \mumax^{-2} \Pcal^\T \Mcal \Rcal_s \Mcal \Pcal = \bar{\Rcal} \ge 0
\ee
where $\bar{\Rcal}$ is from \eqref{eqn:Rsdef}. It follows from \eqref{eqn:EbarsicovSigmaigap2} and \eqref{eqn:EbarsicovSigmaigap3} that
\begin{align}
\label{eqn:EbarsicovSigmaigap4}
\lim_{\mumax \rightarrow 0} \limsup_{i \rightarrow \infty} \| \E \bar{\bm{s}}_i \bar{\bm{s}}_i^\T -  \bar{\Rcal} \| = 0
\end{align}
Therefore, we conclude that the sequence $\{\wt{\swb}_i^\lowdim /\sqrt{\mumax}; i \ge 0\}$ converges weakly to the Gaussian random variable with zero mean and covariance matrix $\Phi$ that satisfies \eqref{eqn:continuoustimeLyapunovEqn}.

\section{Proof of Lemma \ref{lemma:convergence}}
\label{app:convergence}
We follow an argument similar to the proof of Theorem 2 from \cite[p. 256]{Shiryaev80} (which proves the result that convergence in moments implies convergence in distribution). Let $| f(x) | \le c$, i.e., bounded. Because a continuous function $f(x)$ is also \emph{uniformly} continuous in any \emph{bounded} region \cite[p. 54]{Shiryaev80}, for \emph{any} constant $\epsilon > 0$ \emph{and} for \emph{any} constant $b > 0$, there exists some $\delta_{\epsilon, b} > 0$, which depends on the choices of $\epsilon$ \emph{and} $b$, such that $|f(x) - f(y)| < \epsilon$ for $\| x \| < b$ \emph{and} $\| x - y \| < \delta_{\epsilon, b}$. Now, setting $b \defeq \sqrt{ 2 c \sigma^2 / \epsilon } > 0$, where $\sigma^2$ is from \eqref{eqn:convergencesigma}, and using conditional expectations, we have
\begin{align}
\label{eqn:boundmsgapzetaandeta}
\E | f(\bm{\zeta}_i) - f(\bm{\eta}_i) | & = \E [ | f(\bm{\zeta}_i) - f(\bm{\eta}_i) | \; | \; \| \bm{\zeta}_i - \bm{\eta}_i \| < \delta_{\epsilon, b}, \| \bm{\zeta}_i \| < b ] \cdot \Pr[\| \bm{\zeta}_i - \bm{\eta}_i \| < \delta_{\epsilon, b}, \| \bm{\zeta}_i \| < b] \nn \\
& \;\; + \E [ | f(\bm{\zeta}_i) - f(\bm{\eta}_i) | \; | \; \| \bm{\zeta}_i - \bm{\eta}_i \| < \delta_{\epsilon, b}, \| \bm{\zeta}_i \| \ge b ] \cdot \Pr[\| \bm{\zeta}_i - \bm{\eta}_i \| < \delta_{\epsilon, b}, \| \bm{\zeta}_i \| \ge b] \nn \\
& \;\; + \E [ | f(\bm{\zeta}_i) - f(\bm{\eta}_i) | \; | \; \| \bm{\zeta}_i - \bm{\eta}_i \| \ge \delta_{\epsilon, b} ] \cdot \Pr[\| \bm{\zeta}_i - \bm{\eta}_i \| \ge \delta_{\epsilon, b} ] 
\end{align}
The first term on the RHS of \eqref{eqn:boundmsgapzetaandeta} is bounded by
\be
\label{eqn:1stterm}
\mbox{1st term} \le \E [ \epsilon \; | \; \| \bm{\zeta}_i - \bm{\eta}_i \| < \delta, \| \bm{\zeta}_i \| < b ] \times 1 = \epsilon
\ee
Using the fact that $|f(x) - f(y)| \le |f(x)| + |f(y)| \le 2c$, and also the fact that the joint probability is bounded by any one of the marginal probabilities, i.e., $\Pr[A \cap B] \le \Pr[A]$ for any two events $A$ and $B$, the second term on the RHS of \eqref{eqn:boundmsgapzetaandeta} is bounded by
\be
\label{eqn:2ndterm}
\mbox{2nd term} \le 2c \, \Pr[\| \bm{\zeta}_i \| \ge b] \le \frac{2c \, \E \| \bm{\zeta}_i \|^2}{b^2} =  \frac{\epsilon \, \E \| \bm{\zeta}_i \|^2}{\sigma^2}
\ee
where we used Chebyshev's inequality \cite[p. 47]{Shiryaev80}. Likewise, the third term on the RHS of \eqref{eqn:boundmsgapzetaandeta} is bounded by
\be
\label{eqn:3rdterm}
\mbox{3rd term} \le 2c \, \Pr[\| \bm{\zeta}_i - \bm{\eta}_i \| \ge \delta ] \le \frac{2c \, \E \| \bm{\zeta}_i - \bm{\eta}_i \|^2}{\delta^2}
\ee
Now, substituting \eqref{eqn:1stterm}--\eqref{eqn:3rdterm} into \eqref{eqn:boundmsgapzetaandeta}, we have
\be
\E | f(\bm{\zeta}_i) - f(\bm{\eta}_i) | \le \epsilon + \frac{\epsilon \, \E \| \bm{\zeta}_i \|^2}{\sigma^2} + \frac{2c \, \E \| \bm{\zeta}_i - \bm{\eta}_i \|^2}{\delta^2}
\ee
Using \eqref{eqn:convergencemoments} and \eqref{eqn:convergencesigma}, we end up with
\be
\label{eqn:boundmsgapzetaandeta2}
\lim_{\mumax \rightarrow 0} \limsup_{i \rightarrow \infty} \E | f(\bm{\zeta}_i) - f(\bm{\eta}_i) | \le 
2\epsilon
\ee
Since $\epsilon$ is arbitrary, result \eqref{eqn:convergenceweakly} follows from \eqref{eqn:boundmsgapzetaandeta2}.

\section{Proof of \eqref{eqn:chikl2testvar}}
\label{app:moments}
To simplify the notation, we drop the subscript of $\bm{d}_{k,\ell}$ and denote its mean by $\bar{d} \defeq \E \bm{d}$ and its covariance by $C \defeq \E (\bm{d} - \bar{d})(\bm{d} - \bar{d})^\T$. Since $\bm{d}$ is Gaussian, it holds that
\begin{align}
\label{eqn:Ed4}
\E \| \bm{d} \|^4 & = \E \| \bm{d} - \bar{d} + \bar{d} \|^4 \nn \\
& = \E [ \| \bm{d} - \bar{d} \|^2 + 2 (\bm{d} - \bar{d})^\T \bar{d} + \| \bar{d} \|^2 ]^2 \nn \\
& = \E \| \bm{d} - \bar{d} \|^4 + 2 \E \| \bm{d} - \bar{d} \|^2 \| \bar{d} \|^2 + \| \bar{d} \|^4  + 4 \bar{d}^\T \E [(\bm{d} - \bar{d}) (\bm{d} - \bar{d})^\T] \bar{d} \nn \\
& = \E \| \bm{d} - \bar{d} \|^4 + 2 \Tr(C) \| \bar{d} \|^2 + \| \bar{d} \|^4 + 4 \| \bar{d} \|_C^2 
\end{align}
where we used the fact that the odd order moments of $\bm{d} - \bar{d}$ is zero. Likewise,
\begin{align}
\label{eqn:ed22}
( \E \| \bm{d} \|^2 )^2 & = ( \E \| \bm{d} - \bar{d} + \bar{d} \|^2 )^2 \nn \\
& = ( \E \| \bm{d} - \bar{d} \|^2 + \| \bar{d} \|^2 )^2 \nn \\
& = [\Tr(C)]^2 + 2 \Tr(C) \| \bar{d} \|^2 + \| \bar{d} \|^4
\end{align}
From \eqref{eqn:Ed4} and \eqref{eqn:ed22}, we have
\be
\label{eqn:eded2}
\E \| \bm{d} \|^4 - ( \E \| \bm{d} \|^2 )^2 = \E \| \bm{d} - \bar{d} \|^4 - [\Tr(C)]^2 + 4 \| \bar{d} \|_C^2 
\ee
From Lemma A.2 of \cite[p. 11]{Sayed08}, it can be verified that
\be
\label{eqn:edd4}
\E \| \bm{d} - \bar{d} \|^4 = [\Tr(C)]^2 + 2 \Tr(C^2)
\ee
Substituting \eqref{eqn:edd4} into \eqref{eqn:eded2} yields:
\be
\E \| \bm{d} \|^4 - ( \E \| \bm{d} \|^2 )^2 = 2 \Tr(C^2) + 4 \| \bar{d} \|_C^2 
\ee

\bibliographystyle{IEEEbib}
\bibliography{MyAbrv,clustering}

\end{document}